\title[Hyperk\"ahler ambient metrics]
{Hyperk\"ahler ambient metrics associated with twistor CR manifolds}
\author{TAIJI MARUGAME}
\date{}
\renewcommand\a{\alpha}
\renewcommand\b{\beta}
\newcommand\g{\gamma}
\renewcommand\d{\delta}
\newcommand\e{\varepsilon}
\renewcommand\th{\theta}
\newcommand\bo{\boldsymbol{o}}
\newcommand\U{\Upsilon}
\newcommand{\wt}{\widetilde}
\newcommand{\wh}{\widehat}
\newtheorem{lem}{Lemma}[section]
\newtheorem{theorem}[lem]{Theorem}
\newtheorem{prop}[lem]{Proposition}
\theoremstyle{definition}
\newtheorem{rem}[lem]{\it Remark}
\numberwithin{equation}{section}
\address{Department of Mathematics, The University of Electro-Communications, 1-5-1 Chofugaoka, Chofu, Tokyo 182-8585, Japan}
\email{marugame@uec.ac.jp}
\keywords{twistor CR manifold; ambient metric; hyperk\"ahler metric; the Cheng--Yau metric}
\subjclass[2020]{Primary~32V05, Secondary~53C26}
\begin{document}

\begin{abstract} 
Twistor CR manifolds, introduced by LeBrun, are Lorentzian (neutral) CR 5-manifolds defined as  $\mathbb{P}^1$-bundles over 3-dimensional conformal manifolds. 
In this paper, we embed a real analytic twistor CR manifold into the twistor space of the anti self-dual Poincar\'e-Einstein metric whose conformal infinity is the base conformal 3-manifold, and construct the associated Fefferman ambient metric as a neutral hyperk\"ahler metric on the spinor bundle with the zero section removed.
We also describe the structure of the Cheng--Yau type K\"ahler-Einstein metric which has the twistor CR manifold as the boundary at infinity. 
\end{abstract}
\maketitle


\section{Introduction}
The Fefferman--Graham ambient metric (\cite{FG1, FG2}) is an (asymptotically) Ricci-flat pseudo-Riemannian metric $\wt g$ of signature $(p+1, q+1)$ associated with any conformal manifold $(M, [g])$ of signature $(p, q)$. It is defined on a manifold $\wt{\mathcal{G}}$, callled the ambient space, which is diffeomorphic to the product $\mathcal{G}\times(-\e, \e)_\rho$ of the $\mathbb{R}_+$-bundle $\mathcal{G}\subset S^2(T^*M)$ determined by $[g]$ and a small interval $(-\e, \e)$ with the coordinate $\rho$. The Ricci curvature of $\wt g$ satisfies
\[
{\rm Ric}(\widetilde g)=\begin{cases}
O(\rho^\infty) & (n: {\rm odd}) \\
O^+(\rho^{n/2-1}) & (n:  {\rm even}),
\end{cases}
\]
where $n$ is the dimension of $M$; see \S\ref{ambient}  for the notation $O^+(\rho^m)$. This equation, together with a homogeneity condition and a boundary condition, uniquely determines the jet of $\wt g$ along $\mathcal{G}\times\{0\}$ to infinite order when $n$ is odd  and to a finite order when $n$ is even. Moreover, when $(M, [g])$ is real analytic and $n$ is odd, $\wt g$ converges to a Ricci-flat metric in a neighborhood of $\mathcal{G}\times\{0\}$.
Since it is canonically attached to conformal manifolds, the ambient metric provides a powerful tool to construct conformal invariants and conforamlly invariant differential operators (see, e.g., \cite{FG1, FG2, FH, GJMS}).

There is also a notion of ambient metric in CR geometry, which is the geometry of real hypersurfaces in complex manifolds.
Although the CR ambient metric can be regarded as a special case of the Fefferman--Graham ambient metric, it was first introduced by Fefferman in his pioneering work \cite{F}, prior to the construction of the ambient metric in conformal geometry. Let $M$ be a Levi-non-degenerate real hypersurface in an $(n+1)$-dimensional complex manifold $Y$ with a holomorphic line bundle $\mathcal{L}\to Y$. Then, the ambient metric $\wt g$ for $M$ is defined to be a Ricci-flat pseudo-K\"ahler metric on $\mathcal{L}\setminus\{\bo\}$ whose K\"ahler form is written as 
$i\partial\overline{\partial} \wt r$ with a homogeneous defining function $\wt r$ of $(\mathcal{L}\setminus\{\bo\})|_M$. The Ricci-flat condition is equivalent to the complex Monge--Amp\`ere equation for $\wt r$, and Fefferman \cite{F} constructed an approximate solution to it in the case where $Y=\mathbb{C}^{n+1}$ and $\mathcal{L}=K^{1/(n+2)}_{\mathbb{C}^{n+1}}$.
When the Levi form of $M$ has signature $(p, q)$, the restriction of $\wt g$ to $(\mathcal{L}\setminus\{\bo\})|_M$ defines a conformal structure of signature $(2p+1, 2q+1)$, called the Fefferman metric, on the $S^1$-bundle $(\mathcal{L}\setminus\{\bo\})|_M/\mathbb{R}_+$ over $M$. Thus, $\wt g$ can be reinterpreted as the Fefferman--Graham ambient metric for this even dimensional conformal structure. 
There is also an intrinsic definition of the Fefferman metric for abstract CR manifolds; see \cite{BDS, Lee}. 

The K\"ahlerity of the ambient metric in the CR case reflects the fact that the Fefferman metric is a special conformal structure induced by a CR structure.
We recall that the flat model of conformal manifold of signature $(2p+1, 2q+1)$ is $S^{2p+1}\times S^{2q+1}$ and its symmetry group is isomorphic to $\mathrm{O}(2p+2, 2q+2)$ (up to a finite covering), which agrees with the isometry group of the flat ambient space $\mathbb{R}^{2p+2, 2q+2}$. On the other hand, the flat model of CR manifold of signature $(p, q)$ is the hyperquadric in $\mathbb{CP}^{n+1}$ and its symmetry group is 
$\mathrm{SU}(p+1, q+1)$ (up to a finite covering), which is a proper subgroup of $\mathrm{O}(2p+2, 2q+2)$. The K\"ahlerity of $\wt g$ means that one can construct an ambient metric for  the Fefferman metric so that its holonomy group is contained in this smaller group. This is an example of the general principle that the ambient metric for a special class of conformal manifolds obtained from another geometric structure may have a special holonomy; see \cite{CGGH} for the general correspondence in the infinitesimal level between the holonomy of the ambient metric and that of the normal tractor connection associated to conformal manifolds.

There is another example of a special ambient metric.
Following the work of Nurowski \cite{N1, N2} and Leistner--Nurowski \cite{LN},  
Graham--Willse \cite{GW} proved that the conformal structure on a 5-manifold induced by a generic 2-plane distribution admits an ambient metric whose holonomy group is contained in the exceptional Lie group $\mathrm{G}_2\subset \mathrm{SO}(3, 4)$.
For the proof, they established the ``parallel tractor extension'' theorem.
A conformal manifold has a natural vector bundle, called the tractor bundle, equipped with a conformally invariant connection called the tractor connection. The tractor bundle is isomorphic to $T\wt{\mathcal{G}}|_{\mathcal{G}}$ divided by an $\mathbb{R}_+$-action and the 
tractor connection coincides with the restriction of the Levi-Civita connection $\wt\nabla$ of the ambient metric $\wt g$. A section of the tractor bundle or its various tensor bundles is called a tractor. In \cite{GW}, it is shown that any parallel tractor $\chi$ admits an extension to a tensor field $\wt\chi$ on $\wt{\mathcal{G}}$ satisfying
\[
\widetilde\nabla\widetilde\chi=
\begin{cases}
O(\rho^\infty) & (n: {\rm odd}) \\
O(\rho^{n/2-1}) & (n:  {\rm even}).
\end{cases}
\]
Moreover, if $n$ is odd and the conformal manifold is real analytic, 
 $\wt\chi$ satisfies $\widetilde\nabla\widetilde\chi=0$ in a neighborhood of $\mathcal{G}$. The condition that the holonomy group of $\wt g$ be contained in $\mathrm{G}_2$ is characterized by the existence of the parallel associated 3-form, and the existence of the corresponding parallel tractor had been shown by Hammerl--Sagerschnig \cite{HS}. By applying the parallel tractor extension theorem to this tractor, Graham--Willse proved that $\wt g$ has $\mathrm{G}_2$-holonomy in the real analytic case. 
 
The aim of this paper is to present yet another example of ambient metric with a special holonomy; we construct ambient metrics associated with a class of CR manifolds, called twistor CR manifolds, as neutral hyperk\"ahler metrics in the real analytic case. Twistor CR manifolds, introduced by LeBrun \cite{LeB2}, are Lorentzian (neutral) CR 5-manifolds defined as  $\mathbb{P}^1$-bundles over (positive definite) conformal 3-manifolds.
When the base conformal manifold is the standard sphere $S^3$, the twistor CR manifold is CR isomorphic to the hyperquadric
\[
\mathcal{N}=\bigl\{ [(W^\a)]\in \mathbb{P}^3\ \big|\ 
W^0\overline{W^2}+W^2\overline{W^0}+W^1\overline{W^3}+W^3\overline{W^1}=0\bigr\},
\]
which appears in Penrose's twistor theory as the space of  light rays (null twistors) in the Minkowski spacetime. In this case, the hyperk\"ahler ambient metric coincides, up to a constant multiple, with the flat neutral metric
\[
\wt g=dW^0\cdot d\overline{W^2}+dW^2\cdot d\overline{W^0}+dW^1\cdot d\overline{W^3}+dW^3\cdot d\overline{W^1}
\]
on $\mathbb{C}^{4}$; we prove this for the case of $\mathbb{R}^3$ instead of $S^3$ in \S\ref{flat-case}.

The Fefferman metric of a twistor CR manifold is a conformal structure of neutral signature on a 6-dimensional manifold, but it emerges from 3-dimensional conformal geometry, whose symmetry group is $\mathrm{O}(4, 1)$. We recall that there is an isomorphism
\[
\mathrm{Spin}(4, 1)\cong \mathrm{Sp}(1, 1) \bigl(\subset \mathrm{SU}(2, 2)\bigr),
\]
which corresponds to the equivalence $B_2=C_2$ of the Dynkin diagrams. This explains why we can expect the existence of hyperk\"ahler ambient metrics for twistor CR manifolds. However, since the Fefferman conformal structure is even dimensional, the Ricci-flat equation is not guaranteed to be solved to infinite order in general, and also Graham--Willse's parallel tractor extension theorem can only be applied up to a finite order. Thus, we cannot reduce the construction of hyperk\"ahler ambient metrics to that of parallel tractors by using the general theory.

To construct hyperk\"ahler ambient metrics, we first embed the twistor CR manifold into a 3-dimensional complex manifold given as the twistor space of an anti self-dual conformal 4-manifold. It is proved by LeBrun \cite{LeB2} that a twistor CR manifold has a remarkable property that it never admits a local CR embedding into a complex manifold unless the base conformal 3-manifold $(\Sigma, [h])$ is real analytic.
When $(\Sigma, [h])$ is real analytic, another result of LeBrun \cite{LeB1} shows that $(\Sigma, [h])$ can be realized as the two-sided conformal infinity of an anti self-dual Poincar\'e-Einstein manifold $(X\setminus\Sigma, g_+)$. We assume that $\Sigma$ is orientable. Then, since $\Sigma$ is 3-dimensional, its tangent bundle is trivial. Shrinking $X$ if necessary, we may also assume that $X$ has the trivial tangent bundle, so we fix a spin structure of the compactified conformal manifold $(X, [\overline{g}_+])$.
By the construction of Atiyah--Hitchin--Singer \cite{AHS}, we have the twistor space of $(X, [\overline{g}_+])$ as the projectivization 
$\mathbb{P}(\mathbb{S}')$ of the spinor bundle $\mathbb{S}'\to X$. We show that 
the twistor CR manifold is canonically isomorphic to the real hypersuface $\mathbb{P}(\mathbb{S}')|_\Sigma$ and construct a hyperk\"ahler ambient metric on the total space $\mathbb{S}'\setminus\{\bo\}$.

In fact, the construction of the hyperk\"ahler metric works in a more general setting; 
there exists a hyperk\"ahler metric on the spinor bundle with the zero section removed over any 4-dimensional anti self-dual Einstein manifold having the non-zero scalar curvature. Thus, we first present the general construction in \S\ref{hyperkahler-spinor} before we discuss the ambient metric. The existence of such metrics, however, has already been proved by Swann \cite{Sw}:
\begin{theorem}[{\cite[Corollary 3.6]{Sw}}]\label{swann}
Let $(X, g)$ be a $4$-dimensional anti self-dual Einstein manifold with scalar curvature $R\neq0$, and $\mathbb{S}'$ the spinor bundle over $X$.
Then, $\mathbb{S}'\setminus\{\bo\}$ admits a hyperk\"ahler metric which is positive definite when $R>0$ and has neutral signature when $R<0$.
\end{theorem}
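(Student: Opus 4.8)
The plan is to give a direct construction of the hyperk\"ahler structure on $E:=\mathbb{S}'\setminus\{\bo\}$, which is also the form in which we will use it in the later sections; the theorem then follows as the four-dimensional specialisation of Swann's quaternionic-K\"ahler construction. The Levi-Civita connection of $g$ induces a connection on the rank-two complex bundle $\mathbb{S}'\to X$, hence a decomposition $TE=H\oplus V$ of the tangent bundle into horizontal and vertical parts, with $V$ canonically identified along $E$ with $\pi^{*}\mathbb{S}'$. Each fibre carries the parallel symplectic form $\e$ together with the quaternionic (real) structure determined by the Riemannian structure of $X$, so $\mathbb{S}'_{x}\setminus\{0\}$ is a copy of $\mathbb{H}^{*}$ with its flat hyperk\"ahler triple; in particular the tautological section $\tau\in\Gamma(E,\pi^{*}\mathbb{S}')$ has a well-defined squared norm $r^{2}=|\tau|^{2}$, and $E$ carries a radial (Euler) vector field along which everything in sight will be homogeneous. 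Moreover a nonzero value of $\tau$ selects a $g$-compatible quaternionic triple of almost complex structures $(J_{1},J_{2},J_{3})$ on the corresponding tangent space of $X$ --- this is exactly the twistor fibration $\mathbb{P}(\mathbb{S}')\to X$ --- so we may form the horizontal $2$-forms $\omega_{a}^{H}$ ($a=1,2,3$) on $E$ whose value on horizontal lifts of $u,v$ is $r^{2}g(u,J_{a}v)$, and the vertical $2$-forms $\omega_{a}^{V}$ obtained by pulling the flat fibre K\"ahler forms back through the vertical projection. The single place where the hypothesis enters is the observation that, since $(X,g)$ is anti self-dual and Einstein with $R\neq0$, the curvature of the induced connection on $\mathbb{S}'$ is $R$ (up to a fixed nonzero constant) times the fundamental parallel $\mathfrak{sp}(1)$-valued $2$-form determined by the quaternionic structure --- the Weyl contribution drops out by the (anti-)self-duality and the tracefree Ricci by the Einstein condition --- and $R$ itself is a nonzero constant.

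With this in place I would set, for a constant $c$ to be fixed below,
\[
\omega_{a}:=\omega_{a}^{V}+c\,\omega_{a}^{H},\qquad a=1,2,3,
\]
and let $g_{\mathcal H}$ be the associated metric $g_{\mathcal H}=\omega_{1}(\cdot,I_{1}\cdot)$. That the triple $(\omega_{1},\omega_{2},\omega_{3})$ is algebraically compatible with an almost hyper-Hermitian structure $(I_{1},I_{2},I_{3})$ and that $g_{\mathcal H}$ is nondegenerate is immediate at any point once one works in a spinor frame adapted to $\tau$, in which $V$ and $H$ are separately in the standard flat form. By the standard fact that a triple of $2$-forms which is compatible in this sense and closed is automatically parallel --- hence defines a genuine hyperk\"ahler structure --- it remains only to prove $d\omega_{a}=0$ for $a=1,2,3$.

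This closedness identity is the substance of the argument. Working in frames of $\mathbb{S}$ and $\mathbb{S}'$ that are parallel at the chosen point, $d\omega_{a}^{V}$ produces, beyond vertical terms annihilated by the flat-fibre identities, a mixed horizontal--vertical term built from the curvature of the connection on $\mathbb{S}'$; and $d\omega_{a}^{H}$ produces a term from $d(r^{2})$, a term from the covariant derivative of $J_{a}$ (again governed by the connection $1$-form, being the obstruction to the twistor fibres being complex submanifolds), and a purely horizontal term from the Riemann curvature of $X$. The anti self-dual Einstein condition collapses every curvature contribution to its scalar part, and matching the coefficients of the surviving terms both forces the cross-terms to cancel and fixes $c$, whose sign is necessarily that of $R$. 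Since $g_{\mathcal H}$ then restricts to the positive-definite flat metric on each fibre and to $c\,r^{2}\pi^{*}g$ on the horizontal distribution, its signature is $(8,0)$ when $R>0$ and $(4,4)$ when $R<0$, as asserted. I expect the only real difficulty to be organisational: keeping the numerous curvature and connection-form terms in order so that the cancellation forced by the vanishing of the relevant Weyl component is manifest, which is why it is essential to compute in spinor frames adapted to the tautological section $\tau$. This reproduces Swann's construction \cite{Sw} in the case $\dim X=4$.
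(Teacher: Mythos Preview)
Your approach is correct and is essentially the standard ``Swann bundle'' construction: build the three candidate K\"ahler forms $\omega_a=\omega_a^V+c\,\omega_a^H$ symmetrically, verify the quaternionic algebraic relations pointwise, and then check closedness of each $\omega_a$ by matching curvature terms, invoking Hitchin's lemma at the end. The crucial curvature identity you need---that under the anti self-dual Einstein hypothesis the curvature of $\mathbb{S}'$ reduces to the scalar part---is exactly \eqref{S-prime-curvature}, and the cancellation you anticipate does go through.

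The paper, however, takes a different and more complex-geometric route. Rather than treating the three K\"ahler forms on an equal footing, it privileges the Atiyah--Hitchin--Singer complex structure $\mathbb{I}$ and builds everything from two ingredients: a holomorphic symplectic form $\wt\omega=d\wt\tau$ with $\wt\tau=\pi_{B'}\d\pi^{B'}$, and a second complex structure $\mathbb{J}$ anti-commuting with $\mathbb{I}$. The K\"ahler form $\wt\omega_{\mathbb{J}}$ is then obtained by twisting $\wt\omega$ with $\mathbb{J}$, and the key computation (Theorem~\ref{thm-potential}) shows that $\wt\omega_{\mathbb{J}}=i\partial\overline\partial\|\pi\|^2$ admits the explicit K\"ahler potential $\|\pi\|^2$. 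The hyperk\"ahler property then follows from a short algebraic argument (Theorem~\ref{J-parallel}) using only that $\wt\omega$ is holomorphic and non-degenerate: one never needs to verify closedness of three separate forms. What this buys is precisely the K\"ahler potential, which is indispensable for the later sections---it is what solves the Monge--Amp\`ere equation, produces the K\"ahler--Einstein metric on $\mathbb{P}(\mathbb{S}')$, and, after conformal compactification, becomes the homogeneous defining function $\wt r$ in the ambient-metric construction. Your symmetric approach would recover the metric \eqref{wt-g} but would require a separate argument to identify the potential, so for the purposes of this paper the holomorphic route is more efficient.
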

In \cite{Sw}, a family of quoternionic K\"ahler metrics are constructed by solving an ODE on the spinor bundle with the zero section removed over a quoternionic K\"ahler manifold, and the hyperk\"ahler metric above can be obtained as a special (limiting ) case of the construction in 4 dimensions.

In this paper, we give an alternative proof to Theorem \ref{swann} by using spinor calculus. We introduce the second complex structure $\mathbb{J}$ on $\mathbb{S}'\setminus\{\bo\}$ in a similar way to that of the original complex structure $\mathbb{I}$ by Atiyah--Hitchin--Singer, and define the K\"ahler form 
$\wt\omega_{\mathbb{J}}$ of a hyperk\"ahler metric by using $\mathbb{J}$ and a holomorphic symplectic form $\wt\omega$. It is shown that the canonical hermitian norm $\|\pi\|^2=\sigma_{A'\overline{B'}}\pi^{A'}\overline{\pi^{B'}}$ on $\mathbb{S}'$ gives a K\"ahler potential (relative to the complex structure $\mathbb{I}$):
\[
\wt\omega_{\mathbb{J}}=i\partial\overline\partial \|\pi\|^2.
\]
The resulting hyperk\"ahler metric $\wt g$ can be expressed as
\[
\wt g=2\sigma_{A'\overline{B'}}\d\pi^{A'}\cdot\overline{\d\pi^{B'}}+\Lambda\|\pi\|^2 g_{ab}\theta^a\cdot\theta^b,
\]
where $\d\pi^{A'}:=d\pi^{A'}+\Gamma_{cB'}{}^{A'}\pi^{B'}\theta^c$ is the annihilator of the horizontal distribution on $\mathbb{S}'\setminus\{\bo\}$, and $\Lambda:=(1/24)R$ is a multiple of the scalar curvature of the base metric $g=g_{ab}\theta^a\cdot\theta^b$. 

Since $\wt g$ is Ricci-flat and hence $\|\pi\|^2$ solves the complex Monge--Amp\`ere equation, the  $(1, 1)$-form
\[
\omega_{\rm KE}:=i\partial\overline{\partial}\log\|\pi\|^2
\]
descends to the K\"ahler form of a K\"ahler-Einstein metric $g_{\rm KE}$ on $\mathbb{P}(\mathbb{S}')$; see \S\ref{Ricci-flat}. We prove in Theorem \ref{Kahler-Einstein} that in terms of the decomposition
\[
T\mathbb{P}(\mathbb{S}')=H\oplus T({\rm fiber}),
\]
where $H$ is the horizontal lift of $TX$, it can be written as
\[
g_{\rm KE}=(\Lambda g)\oplus g_{\rm FS}
\]
with $g_{\rm FS}$ being the Fubini--Study metric on the fibers.

To obtain a hyperk\"ahler ambient metric on $\wt{\mathcal{G}}:=\mathbb{S}'\setminus\{\bo\}$ for  twistor CR manifolds, we apply these general constructions to the anti self-dual Poincar\'e-Einstein manifold $(X\setminus\Sigma, g_+)$. 
However, since $g_+$ is singular along $\Sigma$, the obtained hyperk\"ahler metric $\wt g$ on $\wt{\mathcal{G}}|_{X\setminus\Sigma}$ also has a singularity along $\wt{\mathcal{G}}|_{\Sigma}$. To ``compactify'' this metric, we pull it back via the dilation $\d_{|r|}$ (and multiply it by the signature of $r$), where $r$ is an arbitrary defining function of $\Sigma\subset X$. 
It can be shown that the resulting hyperk\"ahler structure $(\wt g [r], \mathbb{I}_{r},  \mathbb{J}_{r},  \mathbb{K}_{r})$ extends to $\wt{\mathcal{G}}$, and that these complex structures agree with those constructed by using the metric $\overline{g}_+=r^2 g_+$ in place of $g_+$. Moreover, the K\"ahler potential of $\wt g[r]$ is given as a homogeneous defining function $\wt r$ of $\wt{\mathcal{G}}|_\Sigma\subset\wt{\mathcal{G}}$. Since a hyperk\"ahler metric is Ricci-flat, this implies that $\wt g[r]$ is an ambient metric for the twistor CR manifold $M\cong\mathbb{P}(\mathbb{S}')|_\Sigma$. 

The main result of this paper is summarized as follows:

\begin{theorem}\label{main-theorem}
Let $(\Sigma, [h])$ be an orientable real analytic conformal $3$-manifold which is the conformal infinity of an anti self-dual Poincar\'e-Einstein manifold $(X\setminus\Sigma, g_+)$, and $\wt{\mathcal{G}}=\mathbb{S}'\setminus\{\bo\}$ be the spinor bundle for a fixed spin structure of $(X, [\overline{g}_+])$ with the zero section removed. Then, for any defining function $r$ of $\Sigma\subset X$, there exist complex structures 
$\mathbb{I}_r, \mathbb{J}_r, \mathbb{K}_r=\mathbb{I}_r\mathbb{J}_r=-\mathbb{J}_r\mathbb{I}_r$ and 
a compatible hyperk\"ahler metric $\wt g[r]$ of neutral signature on $\wt{\mathcal{G}}$. Moreover, the K\"ahler form of $\wt g[r]$ with respect to $\mathbb{I}_r$ is written in the form $i\partial\overline{\partial}\wt r$ with a defining function $\wt r$ of $\wt{\mathcal{G}}|_\Sigma\subset\wt{\mathcal{G}}$ which is homogeneous of degree $(1, 1)$, where $\partial\overline{\partial}$ is with respect to $\mathbb{I}_r$. In particular, $\wt g[r]$ gives an ambient metric for the twistor CR manifold $M\cong\mathbb{P}(\mathbb{S}')|_\Sigma$.

For another defining function $\wh r=e^{\U}r$, the dilation
\[
\d_{e^\U}\colon (\wt{\mathcal{G}}, \wt g [\wh r], \mathbb{I}_{\wh r},  \mathbb{J}_{\wh r},  \mathbb{K}_{\wh r})\longrightarrow (\wt{\mathcal{G}}, \wt g[r], \mathbb{I}_{r},  \mathbb{J}_{r},  \mathbb{K}_{r})
\]
gives an isomorphism of hyperk\"ahler manifolds.
\end{theorem}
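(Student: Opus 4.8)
The plan is to construct $(\wt g[r],\mathbb{I}_r,\mathbb{J}_r,\mathbb{K}_r)$ by applying the general construction of \S\ref{hyperkahler-spinor} to the Poincar\'e--Einstein metric $g_+$, pulling the result back by the dilation $\d_{|r|}$, and then showing that the $\mathrm{sgn}(r)$-normalized structure extends smoothly across $\wt{\mathcal{G}}|_\Sigma$. First I would apply \S\ref{hyperkahler-spinor} (that is, Theorem~\ref{swann} together with its explicit description) to the anti self-dual Einstein manifold $(X\setminus\Sigma,g_+)$, whose scalar curvature is a negative constant $R_+$, so that $\Lambda=R_+/24<0$ is constant. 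This produces a neutral hyperk\"ahler structure $(\wt g,\mathbb{I},\mathbb{J},\mathbb{K})$ on $\wt{\mathcal{G}}|_{X\setminus\Sigma}$, with
\[
\wt g=2\,\sigma_{A'\overline{B'}}\,\d\pi^{A'}\cdot\overline{\d\pi^{B'}}+\Lambda\|\pi\|^2\,g_{ab}\,\theta^a\cdot\theta^b,\qquad \wt\omega_{\mathbb{J}}=i\partial\overline\partial\|\pi\|^2,
\]
all computed from $g_+$. I then set $\wt g[r]:=\mathrm{sgn}(r)\,\d_{|r|}^{*}\wt g$, $\mathbb{I}_r:=\d_{|r|}^{*}\mathbb{I}$, $\mathbb{J}_r:=\d_{|r|}^{*}\mathbb{J}$ and $\mathbb{K}_r:=\mathbb{I}_r\mathbb{J}_r$. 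Since $\d_{|r|}$ is a diffeomorphism of $\wt{\mathcal{G}}|_{X\setminus\Sigma}$ and the extra locally constant factor $\mathrm{sgn}(r)$ preserves the quaternionic relations, the $\wt g$-compatibility and the closedness of the three K\"ahler forms, while carrying a neutral form to a neutral form, this is again a neutral hyperk\"ahler structure on $\wt{\mathcal{G}}|_{X\setminus\Sigma}$. Note that $\d_{|r|}$ itself does not extend over $\wt{\mathcal{G}}|_\Sigma$; the point of the next step is that the pulled-back structure nonetheless does, because $\wt g$ degenerates there at exactly the matching rate.

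The heart of the argument is this extension. I would work in a collar $\Sigma\times(-\e,\e)_r$ in which $g_+$ is in Poincar\'e--Einstein normal form, so that $\overline g_+=r^2 g_+$ is a smooth metric on $X$ with $\overline g_+|_{T\Sigma}\in[h]$; since the anti self-dual condition is conformally invariant, $\overline g_+$ is anti self-dual, though it is no longer Einstein. The key observation is that the conformal rescaling $g_+=r^{-2}\overline g_+$ is realized on the total space by the fibrewise dilation $\d_{|r|}$ together with the natural identification of the spinor data of $g_+$ with that of $\overline g_+$, under which the Hermitian spinor metric transforms by its conformal weight, $\|\pi\|^2_{g_+}=|r|^{-1}\|\pi\|^2_{\overline g_+}$. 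Consequently $\mathbb{I}_r,\mathbb{J}_r,\mathbb{K}_r$ coincide with the (almost) complex structures produced by the recipe of \S\ref{hyperkahler-spinor} from the \emph{smooth} metric $\overline g_+$, and hence are smooth on all of $\wt{\mathcal{G}}$, since that recipe uses only the anti self-dual structure and a metric, both of which extend smoothly through $\Sigma$. For $\wt g[r]$ I would substitute the normal form, use that the spin connection of $g_+$ differs from that of $\overline g_+$ by terms of the form $O(dr/r)$, compute $\d_{|r|}^{*}\wt g$, and check that the apparent $r^{-1}$-singularities of its two terms cancel against one another --- this is where the Poincar\'e--Einstein equation and the normalization $\Lambda=R_+/24$ enter --- so that, after multiplication by $\mathrm{sgn}(r)$, one is left with a smooth metric, nondegenerate along $\wt{\mathcal{G}}|_\Sigma$ (nondegeneracy being read off from the leading term of the collar expansion, or from the Levi-nondegeneracy of the twistor CR structure on $M$). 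The vanishing of the three Nijenhuis tensors, the quaternionic relations, the $\wt g[r]$-compatibility and the closedness of the three K\"ahler forms then hold on the dense open set, hence on all of $\wt{\mathcal{G}}$ by continuity, so $(\wt g[r],\mathbb{I}_r,\mathbb{J}_r,\mathbb{K}_r)$ is a neutral hyperk\"ahler structure on $\wt{\mathcal{G}}$. I expect this collar computation to be the main obstacle.

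For the K\"ahler potential I would pull back the Monge--Amp\`ere identity: $\mathrm{sgn}(r)\,\d_{|r|}^{*}\|\pi\|^2=\mathrm{sgn}(r)\,r^2\|\pi\|^2_{g_+}=r\,\|\pi\|^2_{\overline g_+}=:\wt r$, using $\|\pi\|^2_{g_+}=|r|^{-1}\|\pi\|^2_{\overline g_+}$. This $\wt r$ is smooth on $\wt{\mathcal{G}}$, vanishes to exactly first order along $\wt{\mathcal{G}}|_\Sigma=\{r=0\}$ with $d\wt r\neq0$ there (because $\|\pi\|^2_{\overline g_+}>0$ on $\wt{\mathcal{G}}$), i.e.\ is a defining function, and, being quadratic in the fibre variable $\pi$, it is homogeneous of bidegree $(1,1)$ for the $\mathbb{C}^{*}$-action on $\mathbb{S}'$. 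Pulling back $\wt\omega_{\mathbb{J}}=i\partial\overline\partial\|\pi\|^2$ and multiplying by $\mathrm{sgn}(r)$ shows that the K\"ahler form of $\wt g[r]$ with respect to $\mathbb{I}_r$ equals $i\partial\overline\partial\wt r$. Since a hyperk\"ahler metric is Ricci-flat, $\wt g[r]$ is a Ricci-flat pseudo-K\"ahler metric on $\mathbb{S}'\setminus\{\bo\}=\mathcal{L}\setminus\{\bo\}$, where $\mathcal{L}$ is the line bundle over $\mathbb{P}(\mathbb{S}')$ with $\mathcal{L}\setminus\{\bo\}\cong\mathbb{S}'\setminus\{\bo\}$ (the tautological bundle $\mathcal{O}(-1)$ of $\mathbb{P}(\mathbb{S}')$), whose K\"ahler form is $i\partial\overline\partial$ of a homogeneous defining function of $(\mathcal{L}\setminus\{\bo\})|_M$; by the definition of ambient metric recalled in the introduction, $\wt g[r]$ is an ambient metric for the twistor CR manifold $M\cong\mathbb{P}(\mathbb{S}')|_\Sigma$.

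Finally, for $\wh r=e^{\U}r$ one has $\mathrm{sgn}(\wh r)=\mathrm{sgn}(r)$ and $|\wh r|=e^{\U}|r|$, and fibrewise dilations compose and commute, so $\d_{|\wh r|}=\d_{|r|}\circ\d_{e^{\U}}$. Hence, on $\wt{\mathcal{G}}|_{X\setminus\Sigma}$,
\[
\d_{e^{\U}}^{*}\wt g[r]=\mathrm{sgn}(r)\,\d_{e^{\U}}^{*}\d_{|r|}^{*}\wt g=\mathrm{sgn}(\wh r)\,\d_{|\wh r|}^{*}\wt g=\wt g[\wh r],
\]
and similarly $\d_{e^{\U}}^{*}\mathbb{I}_r=\mathbb{I}_{\wh r}$, $\d_{e^{\U}}^{*}\mathbb{J}_r=\mathbb{J}_{\wh r}$ and $\d_{e^{\U}}^{*}\mathbb{K}_r=\mathbb{K}_{\wh r}$. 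By the previous step all of these tensors are smooth on $\wt{\mathcal{G}}$, so the identities persist on $\wt{\mathcal{G}}$ by continuity; since $\d_{e^{\U}}$ is a fibrewise linear isomorphism of $\wt{\mathcal{G}}$ (with $e^{\U}>0$), it is an isomorphism of hyperk\"ahler manifolds $(\wt{\mathcal{G}},\wt g[\wh r],\mathbb{I}_{\wh r},\mathbb{J}_{\wh r},\mathbb{K}_{\wh r})\to(\wt{\mathcal{G}},\wt g[r],\mathbb{I}_r,\mathbb{J}_r,\mathbb{K}_r)$, as claimed.
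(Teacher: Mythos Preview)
Your approach is essentially the paper's: apply \S\ref{hyperkahler-spinor} to $g_+$ on $\wt{\mathcal{G}}|_{X\setminus\Sigma}$, pull back by $\d_{|r|}$ with the sign twist $r/|r|$, identify $\mathbb{I}_r,\mathbb{J}_r$ with the structures built from the smooth connection $\overline\nabla$ of $\overline g_+$ (this is exactly Proposition~\ref{dilation}), and use the potential $\wt r=r\,\|\pi\|_r^2$ together with the dilation relation $\d_{|\wh r|}=\d_{|r|}\circ\d_{e^{\U}}$ for the last assertion.

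The one place you make life harder than necessary is the smooth extension of the metric. The direct collar computation you flag as ``the main obstacle'' is not needed, and the paper does not do it. Once you know $\mathbb{I}_r$ extends smoothly across $\wt{\mathcal{G}}|_\Sigma$ (which you argue) and $\wt r$ is smooth (which you compute), the form $i\partial\overline\partial\wt r$ taken with respect to $\mathbb{I}_r$ is automatically a smooth closed real $(1,1)$-form on all of $\wt{\mathcal{G}}$; since it agrees with $\wt\omega_{\mathbb{J}_r}=\mathrm{sgn}(r)\,\d_{|r|}^*\wt\omega_{\mathbb{J}}$ on the dense set $\wt{\mathcal{G}}|_{X\setminus\Sigma}$, it \emph{is} the K\"ahler form of $\wt g[r]$, and nondegeneracy along $\wt{\mathcal{G}}|_\Sigma$ comes from the Levi form of $M$ exactly as you (and the paper) say. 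In other words, your third paragraph already accomplishes what you set out to do in the second, so reorder those two steps and the anticipated obstacle disappears. The paper additionally writes out the smooth extension of the holomorphic $1$-form $\wt\tau_r=(r/|r|)\,\d_{|r|}^*\wt\tau_1$ via the conformal change of spin connection (equation~\eqref{tau}), but for Theorem~\ref{main-theorem} itself this is auxiliary.
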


We remark that in fact the assumption of real analyticity of $(\Sigma, [h])$ in this theorem can be dropped since we can embed the twistor CR manifold to the twistor space $\mathbb{P}(\mathbb{S}')$ only by assuming that 
$(\Sigma, [h])$ is the conformal infinity of an anti self-dual Poincar\'e-Einstein manifold. By LeBrun's non-embeddablity theorem, this implies that the real analyticity follows automatically from the assumption (Theorem \ref{analyticity}). 

We can also adapt the construction of the K\"ahler-Einstein metric to this setting:
The complex structure on $\mathbb{P}(\mathbb{S}')$ induced by $\mathbb{I}_r$ is independent of the choice of $r$, and the $(1, 1)$-form
\[
\omega_{\rm CY}:=-i\partial\overline\partial\log|\wt r|
\]
gives the K\"ahler form of a Lorentzian K\"ahler-Einstein metric $g_{\rm CY}$ on $\mathbb{P}(\mathbb{S}')|_{X\setminus\Sigma}$. We call $g_{\rm CY}$ the smooth Cheng--Yau metric since this corresponds to the complete K\"ahler-Einstein metrics on bounded strictly pseudoconvex domains in $\mathbb{C}^{n+1}$ constructed by Cheng--Yau \cite{ChY}. By applying the theorem in the general case, we obtain the expression
\[
g_{\rm CY}=(-\Lambda g_+)\oplus(-g_{\rm FS})
\]
in terms the decomposition
\[
T\mathbb{P}(\mathbb{S}')|_{X\setminus\Sigma}=H\oplus T({\rm fiber});
\]
see Theorem \ref{cheng-yau}.

We conclude by briefly discussing higher dimensional analogues of our results. LeBrun's twistor construction of Poincar\'e-Einstein metrics was extended to higher dimensions by Biquard \cite{Bi}, where the conformal infinities are given by quaternionic contact manifolds. Moreover, associated to quaternionic contact structures, there exists a construction of twistor CR manifolds arising from the inclusion of symmetry groups  $\mathrm{Sp}(p+1, q+1)\subset \mathrm{SU}(2p+2, 2q+2)$; see \cite{CS}. These suggest that, for the Fefferman metrics of such twistor CR manifolds, one should be able to construct ambient metrics whose holonomy groups are contained in $\mathrm{Sp}(p+1, q+1)$.

\medskip

This paper is organized as follows: 
In \S\ref{spinor-calculus}, we summarize basic definitions and formulas in the spinor calculus needed in our constructions. In \S\ref{hyperkahler-spinor}, we define several complex structures on the spinor bundle $\mathbb{S}'\setminus\{\bo\}$ over an anti self-dual Einstein 4-manifold $(X, g)$ with non-zero scalar curvature, and construct a compatible hyperk\"ahler metric $\wt g$.  Then, we give a description of the associated K\"ahler-Einstein metric on the twistor space $\mathbb{P}(\mathbb{S}')$ in \S\ref{MA-Kahler-Einstein}. In \S\ref{ambient-conf-CR}, we review the notion of ambient metric and some related metrics in conformal and CR geometries. In \S\ref{hyperkahler-ambient}, we give the definition of twistor CR manifold and embed it into the twistor space of the anti self-dual Poincar\'e-Einstein manifold. Then, by adapting the general theory, we construct neutral hyperk\"ahler ambient metrics associated with twistor CR manifolds, and give a description of the smooth Cheng--Yau metric. Finally, in \S\ref{flat-case} we compute the hyperk\"ahler ambient metric for the flat model.

\bigskip

{\it Notation}:
We use Einstein's summation convention throughout the paper. The spinor indices are lowered and raised by the skew forms $\e_{AB}, \e_{A'B'}$ and their inverses $\e^{AB}, \e^{A'B'}$ as 
\[
\xi_A=\xi^B\e_{BA}, \quad \eta_{A'}=\eta^{B'}\e_{B'A'}, \quad \xi^A=\e^{AB}\xi_{B}, \quad 
\eta^{A'}=\e^{A'B'}\eta_{B'},
\]
except for \S\ref{flat-case} where we use the compactified skew form $\overline{\e}_{AB}$, etc. 

We define the (skew) symmetrizations of indices by
\[
\varphi_{(ab)}:=\frac{1}{2}(\varphi_{ab}+\varphi_{ba}), \quad \varphi_{[ab]}:=\frac{1}{2}(\varphi_{ab}-\varphi_{ba})
\]
and similarly for $k$-tensors or spinors, dividing by the factor $k!$.

For 1-forms $\varphi, \psi$, we set
\begin{align*}
(\varphi\cdot\psi)(V, W)&:=\frac{1}{2}\bigl(\varphi(V)\psi(W)+\varphi(W)\psi(V)\bigr), \\
(\varphi\wedge\psi)(V, W)&:=\varphi(V)\psi(W)-\varphi(W)\psi(V)
\end{align*}
so that the correspondence between K\"ahler metrics and K\"ahler forms is given by
\[
2g_{\a\overline{\b}}\theta^{\a}\cdot \overline{\theta^{\b}}\longleftrightarrow
ig_{\a\overline{\b}}\theta^{\a}\wedge\overline{\theta^{\b}}.
\]
\medskip

\noindent {\bf Acknowledgment} 
The author thanks the anonymous referees for their helpful comments.
This  work was partially supported by JSPS KAKENHI Grant Number 22K13922.

\section{The spinor calculus}\label{spinor-calculus}

We review basic definitions and formulas in the spinor calculus. We basically follow the notation in \cite{PR1}, although they deal with Lorentzian metrics and adopt different sign convention for curvature tensors from ours.  
\subsection{Riemannian spinor bundles}
Let $(X, g)$ be a 4-dimensional oriented Riemannian manifold, and let $\mathcal{F}\to X$  
be the oriented orthonormal frame bundle. A spin structure on $X$ is a principal $\mathrm{Spin}(4)$-bundle $\wt{\mathcal{F}}\to X$ which has a $\mathrm{Spin}(4)$-equivariant double covering $\wt{\mathcal{F}}\to\mathcal{F}$. Here, $\mathrm{Spin}(4)$ acts on $\mathcal{F}$ from the right through 
$\mathrm{Spin}(4)\to \mathrm{SO}(4)$. 

We recall that $(A, B)\in \mathrm{Spin}(4)\cong \mathrm{SU}(2)\times \mathrm{SU}(2)$ acts on 
\[
\mathbb{R}^4\cong\mathbb{M}:=\Bigl\{Y=\frac{1}{\sqrt{2}}
\begin{pmatrix}
x^0+ix^3 & x^1+ix^2 \\
-x^1+ix^2 & x^0-ix^3
\end{pmatrix}
\Big|\ 
x^a\in\mathbb{R}
\Bigr\}
\]
by
\[
(A, B)\cdot Y=AYB^{-1}.
\]
Since the standard metric on $\mathbb{M}$ is represented by $\langle Y, Y\rangle=2\det Y$, this action preserves the metric and gives the double covering map $\mathrm{Spin}(4)\to \mathrm{SO}(4)$. By using the representations $\rho_\pm$ of $\mathrm{Spin}(4)$ on $\mathbb{C}^2$ given by
\[
\rho_+(A, B)(\xi):=A\xi, \quad \rho_-(A, B)(\eta):={}^t\! B^{-1}\eta=\overline{B}\eta,
\]
we define the spinor bundles over $X$ by 
\[
\mathbb{S}:=\wt{\mathcal{F}}\times_{\rho_+}\mathbb{C}^2, \quad 
\mathbb{S}':=\wt{\mathcal{F}}\times_{\rho_-}\mathbb{C}^2.
\]
Then, since the representation $(\mathbb{C}^2\otimes\mathbb{C}^2, \rho_+\otimes\rho_-)$ is equivalent to $\mathbb{C}\otimes\mathbb{M}$, we have 
\begin{equation}\label{spinor-isom}
\mathbb{S}\otimes\mathbb{S}'\cong \mathbb{C}TX.
\end{equation}
If we use unprimed indices $A, B, C, \dots$ for $\mathbb{S}$ and primed indices $A', B', C', \dots$ for $\mathbb{S}'$, the isomorphism \eqref{spinor-isom} is represented as 
\[
v^a=\gamma^a_{AA'}v^{AA'}, \quad  v^{AA'}=\gamma_a^{AA'}v^a
\]
with a tensor $\gamma^a_{AA'}, \gamma_{a}^{AA'}$. We often abbreviate these expressions as $v^a=v^{AA'}$. The skew symmetric form on $\mathbb{C}^2$ expressed by the matrix
\begin{equation}\label{epsilon}
\e=\begin{pmatrix}
0 & 1 \\
-1 & 0
\end{pmatrix}
\end{equation}
is $\rho_{\pm}$-invariant, so it induces skew symmetric forms $\e=(\e_{AB}), \e'=(\e_{A'B'})$ on $\mathbb{S}, \mathbb{S}'$. Under the isomorphism \eqref{spinor-isom}, we have
\[
g_{ab}=\e_{AB}\e_{A'B'}
\]
on $\mathbb{C}TX$. The inverses of $\e, \e'$ are denoted by $\e^{AB}, \e^{A'B'}$: 
\[
\e_{AB}\e^{CB}=\d_A{}^C, \quad \e_{A'B'}\e^{C'B'}=\d_{A'}{}^{C'}.
\]
We lower and raise spinor indices with these skew forms:
\[
\xi_A=\xi^B\e_{BA}, \quad \eta_{A'}=\eta^{B'}\e_{B'A'}, \quad \xi^A=\e^{AB}\xi_{B}, \quad 
\eta^{A'}=\e^{A'B'}\eta_{B'}.
\]
We also have $\mathbb{C}$-anti-linear bundle maps
\[
\sigma\colon \mathbb{S}\longrightarrow\mathbb{S}, \quad 
\sigma\colon \mathbb{S}'\longrightarrow\mathbb{S}'
\]
induced by the $\rho_\pm$-equivariant map
\begin{equation}\label{sigma}
\sigma
\begin{pmatrix}
a \\
b
\end{pmatrix}=
\begin{pmatrix}
\overline{b} \\
-\overline{a}
\end{pmatrix}.
\end{equation}
We express these maps by 
\[
\xi^{A}\longmapsto \sigma^A{}_{\overline B}\overline{\xi^{B}}, \quad 
\eta^{A'}\longmapsto \sigma^{A'}{}_{\!\!\overline{B'}}\overline{\eta^{B'}}
\]
with $\sigma^A{}_{\overline B}\in \mathbb{S}\otimes\overline{\mathbb{S}}^*, 
\sigma^{A'}{}_{\!\!\overline{B'}}\in \mathbb{S}'\otimes\overline{\mathbb{S}'}^*$.
Since $\sigma^2=-{\rm id}$, we have
\begin{equation}\label{sigma-square}
\sigma^{A}{}_{\overline{C}}\,\sigma^{\overline C}{}_{B}=-\d_{B}{}^A, \quad 
\sigma^{A'}{}_{\!\!\overline{C'}}\,\sigma^{\overline{C'}}{}_{\!\!B'}=-\d_{B'}{}^{A'}, 
\end{equation}
where we set
\[
\sigma^{\overline A}{}_{B}:=\overline{\sigma^{A}{}_{\overline B}}, \quad \sigma^{\overline{A'}}{}_{\!\!B'}:=\overline{\sigma^{A'}{}_{\!\!\overline{B'}}}.
\]
We also have 
\[
\e(\sigma(\xi_1), \sigma(\xi_2))=\overline{\e(\xi_1, \xi_2)}, \quad 
\e'(\sigma(\eta_1), \sigma(\eta_2))=\overline{\e'(\eta_1, \eta_2)},
\]
which implies
\begin{equation}\label{invariance-e-tensor}
\sigma_{B\overline{C}}\,\sigma^{B}{}_{\overline D}=\overline{\e_{CD}}, \quad
\sigma_{B'\overline{C'}}\sigma^{B'}{}_{\!\!\overline{D'}}=\overline{\e_{C'D'}}.
\end{equation}
Contracting these equations with $\sigma^{\overline C}{}_{A}$, $\sigma^{\overline{C'}}{}_{\!\!A'}$, we have
\begin{equation}\label{sigma-sym}
\sigma_{A\overline{D}}=\sigma_{\overline{D}A}, \quad \sigma_{A'\overline{D'}}
=\sigma_{\overline{D'}A'}.
\end{equation}
The hermitian forms
\[
\e(\sigma(\xi_2), \xi_1)=\sigma_{A\overline{B}}\xi_1^A\overline{\xi_2^B}, \quad 
\e'(\sigma(\eta_2), \eta_1)=\sigma_{A'\overline{B'}}\eta_1^{A'}\overline{\eta_2^{B'}}
\]
are positive definite.

Under the isomorphism \eqref{spinor-isom}, the complex conjugation $\mathbb{C}TX\to \mathbb{C}TX$ corresponds to the map $\sigma\otimes\sigma\colon \mathbb{S}\otimes\mathbb{S}'\to \mathbb{S}\otimes\mathbb{S}'$,
\[
v^a=v^{AA'}\longmapsto \overline{v}^a=\sigma^{A}{}_{\overline B}\sigma^{A'}{}_{\!\!\overline{B'}}\overline{v^{BB'}}.
\]
This should not  be confused with the complex conjugate $\overline{v^{AA'}}\in \overline{\mathbb{S}}\otimes\overline{\mathbb{S}'}$ as a spinor. For a covector 
$\varphi_a=\varphi_{AA'}$, we have
\[
\overline{\varphi}_a v^a=\overline{\varphi_a\overline{v}^a}=
\overline{\varphi_{AA'}\sigma^{A}{}_{\overline B}\sigma^{A'}{}_{\!\!\overline{B'}}\overline{v^{BB'}}}=\sigma^{\overline A}{}_{B}\sigma^{\overline{A'}}{}_{\!\!B'}\overline{\varphi_{AA'}}v^{BB'}.
\]
Thus, the complex conjugate is given by
\[
\overline{\varphi}_a=\sigma^{\overline B}{}_{A}\sigma^{\overline{B'}}{}_{\!\!A'}\overline{\varphi_{BB'}}=\sigma_{A}{}^{\overline B}\sigma_{A'}{}^{\overline{B'}}\overline{\varphi_{BB'}}.
\]

\subsection{The curvature spinors}
The spinor bundles admit a linear connection $\nabla$ induced by the principal connection on $\wt{\mathcal{F}}$, which is the lift of the Levi-Civita connection on $\mathcal{F}$. By this definition, the isomorphism \eqref{spinor-isom} preserves the connection:
\[
\nabla_c\gamma^b_{AA'}=0, \quad \nabla_c\gamma_b^{AA'}=0.
\]
Moreover, the skew forms $\e, \e'$ and the bundle map $\sigma$ are parallel with respect to this connection:
\[
\nabla_c\e_{AB}=0, \quad \nabla_c\e_{A'B'}=0, \quad \nabla_c\sigma^A{}_{\overline B}=0, \quad 
\nabla_c\sigma^{A'}{}_{\overline{B'}}=0.
\]
In fact, the connection $\nabla$ is characterized by the condition that it preserve the skew forms $\e, \e'$ and induce the Levi-Civita connection on $\mathbb{S}\otimes\mathbb{S}'\cong \mathbb{C}TX$.

Let $R_{abcd}, R_{ab}, R$ be the curvature tensor, the Ricci tensor, and the scalar curvature of $g$: 
$(\nabla_a\nabla_b-\nabla_b\nabla_a)v^c=R_{ab}{}^c{}_d v^d, \ R_{ab}=R_{ca}{}^c{}_b, \  R=R_a{}^a$. (Note that our sign convention is opposite to that in \cite{PR1}.) 
Then we have a decomposition
\begin{align*}
R_{abcd}&=\Psi_{ABCD}\e_{A'B'}\e_{C'D'}+\wt\Psi_{A'B'C'D'}\e_{AB}\e_{CD} \\
&\quad +\Phi_{ABC'D'}\e_{A'B'}\e_{CD}+\Phi_{CDA'B'}\e_{AB}\e_{C'D'} \\
&\quad +\Lambda(\e_{AC}\e_{BD}\e_{A'B'}\e_{C'D'}+\e_{AD}\e_{BC}\e_{A'B'}\e_{C'D'} \\
&\quad\quad\quad +\e_{A'C'}\e_{B'D'}\e_{AB}\e_{CD}+\e_{A'D'}\e_{B'C'}\e_{AB}\e_{CD})
\end{align*}
with spinors 
\begin{align*}
\Psi_{ABCD}=\Psi_{(ABCD)},\quad  \wt\Psi_{A'B'C'D'}=\wt\Psi_{(A'B'C'D')}, \quad  \Phi_{ABC'D'}=\Phi_{(AB)(C'D')}
\end{align*}
and a scalar $\Lambda$. These spinor curvatures satisfy
\begin{align*}
W_{abcd}&=\Psi_{ABCD}\e_{A'B'}\e_{C'D'}+\wt\Psi_{A'B'C'D'}\e_{AB}\e_{CD}, \\
 R_{(ab)_0}&=-2\Phi_{ABA'B'}, \\
 R&=24\Lambda,
\end{align*}
where 
\[
W_{abcd}=R_{abcd}-2P_{c[a}g_{b]d}+2P_{d[a}g_{b]c}\quad 
\Bigl(P_{ab}=\frac{1}{2}R_{ab}-\frac{1}{12}R g_{ab}\Bigr)
\]
is the Weyl curvature and $R_{(ab)_0}$ is the trace-free part of $R_{ab}$.

The spinor expression of the volume form
\[
\frac{1}{4!}e_{abcd}\th^a\wedge\th^b\wedge\th^c\wedge\th^d, \quad e_{abcd}=e_{[abcd]}
\]
is given by
\[
e_{abcd}=\e_{AC}\e_{BD}\e_{A'D'}\e_{B'C'}-\e_{AD}\e_{BC}\e_{A'C'}\e_{B'D'}.
\]
From this, we have
\[
W^-_{abcd}=\Psi_{ABCD}\e_{A'B'}\e_{C'D'}, \quad W^+_{abcd}=\wt\Psi_{A'B'C'D'}\e_{AB}\e_{CD},
\]
where $W^-$ is the anti self-dual part and $W^+$ is the self-dual part of the Weyl curvature: $\frac{1}{2}e_{cd}{}^{pq}W^{\pm}_{abpq}=\pm W^{\pm}_{abcd}$.

The curvature $\Omega_{ab}{}^{C'}{}_{D'}$ of $\nabla$ on $\mathbb{S}'$ is given by
\begin{align*}
\Omega_{ab}{}^{C'}{}_{D'}\pi^{D'}
&=(\nabla_a\nabla_b-\nabla_b\nabla_a)\pi^{C'} \\
&=-\Phi_{AB}{}^{C'}{}_{D'}\pi^{D'}\e_{A'B'}-\wt\Psi_{A'B'}{}^{C'}{}_{D'}\pi^{D'}\e_{AB}+2\Lambda\d_{(A'}{}^{C'}\pi_{B')}\e_{AB}.
\end{align*}
In particular, if $g$ is anti self-dual ($\wt\Psi_{A'B'C'D'}=0$) and Einstein ($\Phi_{ABC'D'}=0$), we have
\begin{equation}\label{S-prime-curvature}
\Omega_{ab}{}^{C'}{}_{D'}\pi^{D'}=2\Lambda\e_{AB}\d_{(A'}{}^{C'}\pi_{B')}.
\end{equation}

Let $\frac{1}{2}\Omega^*_{ab}{}^{\overline{C'}}{}_{\overline{D'}}\theta^a\wedge\theta^b$ be the curvature form of $\nabla$ on $\overline{\mathbb{S}'}$. Then, we have
\[
\Omega^*_{ab}{}^{\overline{C'}}{}_{\overline{D'}}\theta^a\wedge\theta^b
=\overline{\Omega_{ef}{}^{C'}{}_{D'}\theta^e\wedge\theta^f},
\]
and hence
\[
\Omega^*_{ab}{}^{\overline{C'}}{}_{\overline{D'}}=\sigma_A{}^{\overline E}\sigma_{A'}{}^{\overline{E'}}\sigma_{B}{}^{\overline F}\sigma_{B'}{}^{\overline{F'}}\,\overline{\Omega_{EE'FF'}{}^{C'}{}_{D'}}.
\]
It follows from this identity together with \eqref{S-prime-curvature}, \eqref{invariance-e-tensor} that
\begin{equation}\label{Omega-star-pi}
\begin{aligned}
\Omega^*_{ab}{}^{\overline{C'}}{}_{\overline{D'}}\overline{\pi^{D'}}&=
2\Lambda\sigma_A{}^{\overline E}\sigma_{A'}{}^{\overline{E'}}\sigma_B{}^{\overline F}\sigma_{B'}{}^{\overline{F'}}\,\overline{\d_{(E'}{}^{C'}\pi_{F')}\e_{EF}} \\
&=\Lambda\sigma_{A\overline{F}}\sigma_{A'}{}^{\overline{C'}}\sigma_B{}^{\overline F}\sigma_{B'}{}^{\overline{F'}}\overline{\pi_{F'}}+\Lambda\sigma_{A\overline{F}}
\sigma_{A'}{}^{\overline{E'}}\sigma_{B}{}^{\overline F}\sigma_{B'}{}^{\overline{C'}}\overline{\pi_{E'}} \\
&=2\Lambda\e_{AB}\sigma_{(A'}{}^{\overline{C'}}\sigma_{B')}{}^{\overline{E'}}\overline{\pi_{E'}}
\ \Bigl(=-2\Lambda\e_{AB}\sigma_{(A'}{}^{\overline{C'}}(\sigma\pi)_{B')}\Bigr),
\end{aligned}
\end{equation}
where we set
\[
(\sigma\pi)^{A'}:=\sigma^{A'}{}_{\overline{B'}}\overline{\pi^{B'}}.
\]


\subsection{Conformal spinor bundles}
For the spinor calculus on conformal manifolds, we consider the groups 
\[
G:=\mathrm{SO}(4)\times\mathbb{R}_+,\quad \wt G:=\mathrm{Spin}(4)\times\mathbb{R}_+
=\mathrm{SU}(2)\times \mathrm{SU}(2)\times\mathbb{R}_+
\]
and the map $\wt G\to G$ defined by 
\begin{equation*}
(u, a)\longmapsto (\pi_0(u), a^2),
\end{equation*}
where $\pi_0\colon \mathrm{Spin}(4)\to \mathrm{SO}(4)$ is the double covering map. Then, $\wt G$ acts on $\mathbb{M}$ by 
\[
(A, B, a)\cdot Y=a^2 AYB^{-1}.
\]

Let $(X, [g])$ be a 4-dimensional oriented conformal manifold. Then the frame bundle over $X$ has a reduction to a $G$-bundle $\mathcal{F}\to X$. If a representative metric $g\in[g]$ admits a spin structure, then we have a $\wt G$-bundle 
$\wt{\mathcal{F}}\to X$ and a $G$-equivariant double covering map $\pi\colon \wt{\mathcal{F}}\to\mathcal{F}$, where $\wt G$ acts on $\mathcal{F}$ from the right through the map $\wt G\to G$. By using the representations $\rho^c_\pm$ of $\wt G$ on $\mathbb{C}^2$ given by
\[
\rho^c_+(A, B, a)(\xi):=a A\xi, \quad \rho^c_-(A, B, a)(\eta):=a{}^t\! B^{-1}\eta=a\overline{B}\eta,
\]
we define the spinor bundles over $X$ by 
\[
\mathbb{S}:=\wt{\mathcal{F}}\times_{\rho^c_+}\mathbb{C}^2, \quad 
\mathbb{S}':=\wt{\mathcal{F}}\times_{\rho^c_-}\mathbb{C}^2.
\]
Then, since the representation $(\mathbb{C}^2\otimes\mathbb{C}^2, \rho^c_+\otimes\rho^c_-)$ is equivalent to $\mathbb{C}\otimes\mathbb{M}$, we have $\mathbb{S}\otimes\mathbb{S}'\cong \mathbb{C}TX$. As before, we use unprimed indices $A, B, C, \dots$ for $\mathbb{S}$ and primed indices $A', B', C', \dots$ for $\mathbb{S}'$. The conformal classes of the skew forms $[\e_{AB}], [\e_{A'B'}]$ determined by \eqref{epsilon} are well-defined and we have 
$[g_{ab}]=[\e_{AB}\e_{A'B'}]$. We also have the $\mathbb{C}$-anti-linear bundle maps
\[
\sigma=(\sigma^{A}{}_{\overline{B}})\colon \mathbb{S}\longrightarrow\mathbb{S}, \quad 
\sigma=(\sigma^{A'}{}_{\!\!\overline{B'}})\colon \mathbb{S}'\longrightarrow\mathbb{S}'
\]
induced by the $\rho^c_{\pm}$-equivariant map \eqref{sigma}.

Each choice of a representative metric $g\in[g]$ determines a reduction of $\mathcal{F}$ to an $\mathrm{SO}(4)$-bundle $\mathcal{F}_g\subset \mathcal{F}$ and this defines a $\mathrm{Spin}(4)$-bundle $\wt{\mathcal{F}}_g:=\pi^{-1}(\mathcal{F}_g)\subset \wt{\mathcal{F}}$. Then, the spinor bundles can be represented as 
\[
\mathbb{S}=\wt{\mathcal{F}}_g\times_{\rho_+}\mathbb{C}^2, \quad 
\mathbb{S}'=\wt{\mathcal{F}}_g\times_{\rho_-}\mathbb{C}^2
\]
and we obtain skew forms $\e_{AB}\in[\e_{AB}], \e_{A'B'}\in[\e_{A'B'}]$ such that 
$g_{ab}=\e_{AB}\e_{A'B'}$. Let $\wh g=e^{2\U}g\ (\U\in C^\infty(X))$ be another representative metric.
Then we have 
\[
\mathcal{F}_{\wh g}=\mathcal{F}_g\cdot (I_4, e^{-\U})
\]
in $\mathcal{F}$, and hence
\[
\wt{\mathcal{F}}_{\wh g}=\wt{\mathcal{F}}_g\cdot (I_2, I_2, e^{-\frac{\U}{2}})
\]
in $\wt{\mathcal{F}}$. This implies that if a spinor $\xi\in\mathbb{S}$ has fiber coordinates ${}^t(a, b)$ in a frame in $\wt{\mathcal{F}}_g$, it has coordinates $e^{\U/2}\cdot{}^t(a, b)$ in a frame in $\wt{\mathcal{F}}_{\wh g}$, and similarly for $\mathbb{S}'$. Hence we have
\begin{equation}\label{e-rescale}
\wh\e_{AB}=e^{\U}\e_{AB}, \quad \wh\e_{A'B'}=e^{\U}\e_{A'B'}.
\end{equation}

The conformal transformation formulas for the connections on spinor bundles are given by
\begin{equation}\label{spinor-conf}
\begin{aligned}
\wh\nabla_a \xi^B&=\nabla_a \xi^B+\d_A{}^B \U_{CA'}\xi^C, \\
\wh\nabla_a \eta^{B'}&=\nabla_a \eta^{B'}+\d_{A'}{}^{B'} \U_{AC'}\eta^{C'},
\end{aligned}
\end{equation}
where $\U_a=\U_{AA'}$ is defined by $d\U=\U_a\theta^a$. Indeed, one can show that  
these connections preserve $\wh\e_{AB}, \wh\e_{A'B'}$ and induce the Levi-Civita connection of $\wh g$ on $\mathbb{S}\otimes\mathbb{S}'\cong \mathbb{C}TX$.
We note that the bundle map $\sigma$ is conformally invariant, and parallel with respect to the connection determined by any representative metric.


\section{The hyperk\"ahler metric on $\mathbb{S}'\setminus\{\bo\}$}\label{hyperkahler-spinor}

\subsection{The complex structure $\mathbb{I}$ on $\mathbb{S}'\setminus\{\bo\}$}\label{def-I}
Let $g$ be an anti self-dual Einstein metric with $\Lambda\neq0$ on a 4-dimensional manifold $X$.
We assume that $(X, g)$ admits a spin structure and let $\mathbb{S}, \mathbb{S}'$ be the spinor bundles over $X$ for a fixed spin structure. For each $[\pi]=[\pi^{A'}]\in \mathbb{P}(\mathbb{S}'_x)$, the 2-dimensional totally null subspace
\[
\mathcal{A}_{[\pi]}:=\Bigl\{\xi^A\pi^{A'}\frac{\partial}{\partial x^a}\in \mathbb{C}T_x X\ \Big|\ \xi^A\in \mathbb{S}_x\Bigr\}
\]
is called an {\it $\a$-plane}. Since the complex conjugation $\mathbb{C}TX\cong \mathbb{S}\otimes\mathbb{S}'\to \mathbb{S}\otimes\mathbb{S}'\cong\mathbb{C}TX$ is given by $\sigma\otimes\sigma$, we have
\[
\overline{\mathcal{A}_{[\pi]}}=\mathcal{A}_{[\sigma(\pi)]}.
\]
Moreover, since $\e'(\sigma(\pi), \pi)>0$, the spinors $\pi, \sigma(\pi)$ form a basis of $\mathbb{S}'_x$ and hence $\mathcal{A}_{[\pi]}\cap\mathcal{A}_{[\sigma(\pi)]}=\{0\}$. Thus we have
\[
\mathbb{C}T_x X=\mathcal{A}_{[\pi]}\oplus\mathcal{A}_{[\sigma(\pi)]}=\mathcal{A}_{[\pi]}\oplus\overline{\mathcal{A}_{[\pi]}}.
\]
Then we can define an almost complex structure $\mathbb{I}$ on $\mathbb{S}'\setminus\{\bo\}$ by 
setting
\begin{align*}
T^{0, 1}_\pi (\mathbb{S}'\setminus\{\bo\})&:=\wt{\mathcal{A}}_{[\pi]}\oplus T^{0,1}_\pi({\rm fiber}), \\ 
T^{1, 0}_\pi(\mathbb{S}'\setminus\{\bo\})&:=\wt{\mathcal{A}}_{[\sigma(\pi)]}\oplus T^{1, 0}_\pi({\rm fiber}),
\end{align*}
where $\wt{\mathcal{A}}_{[\pi]}, \wt{\mathcal{A}}_{[\sigma(\pi)]}$ denote the horizontal lifts of $\mathcal{A}_{[\pi]}, \mathcal{A}_{[\sigma(\pi)]}$ with respect to the connection $\nabla$ on $\mathbb{S}'$ induced by the Levi-Civita connection of $g$.

Let $(\Gamma_{cA'}{}^{B'}dx^c)$ be the connection form of $\nabla$. If we set
\[
\Gamma_{c\overline{A'}}{}^{\overline{B'}}:=\overline{\Gamma_{cA'}{}^{B'}},
\]
then the horizontal lift of a vector $v=v^a(\partial/\partial x^a)\in\mathbb{C}TX$ to $\mathbb{C}T_\pi(\mathbb{S}'\setminus\{\bo\})$ is written as
\[
\wt v=v^a\frac{\partial}{\partial x^a}-v^c\Bigl(\Gamma_{cA'}{}^{B'}\pi^{A'}\frac{\partial}{\partial \pi^{B'}}+\Gamma_{c\overline{A'}}{}^{\overline{B'}}\overline{\pi^{A'}}\frac{\partial}{\partial \overline{\pi^{B'}}}\Bigr).
\]
Hence, for any local frame $(\theta^a)$ of $\mathbb{C}T^*X$, the set of 1-forms
\[
\d\pi^{A'}:=d\pi^{A'}+\Gamma_{cB'}{}^{A'}\pi^{B'}\theta^c, \quad \pi_{A'}\theta^{AA'}\bigl(=\pi_{A'}\gamma^{AA'}_b\theta^b\bigr)
\] 
provides a local frame for $T^*_{1, 0}(\mathbb{S}'\setminus\{\bo\})$.

The following is a fundamental theorem in twistor theory:

\begin{theorem}[\cite{P, AHS}]\label{I-integrable}
The almost complex structure $\mathbb{I}$ is integrable.
\end{theorem}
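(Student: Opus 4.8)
The plan is to verify integrability of $\mathbb{I}$ by showing that the $(1,0)$-forms listed above are closed modulo the ideal they generate, equivalently that the exterior derivatives of $\d\pi^{A'}$ and $\pi_{A'}\theta^{AA'}$ contain no $(0,2)$-component. First I would fix a representative metric $g$ and a local orthonormal coframe $(\theta^a)$ on $X$, pull everything back to $\mathbb{S}'\setminus\{\bo\}$, and record the structure equations: $d\theta^a = -\omega^a{}_b\wedge\theta^b$ with the Levi-Civita connection form, and the spinor structure equation $d\e_{A'B'}=0$ together with the curvature identity $d\Gamma_{A'}{}^{B'} + \Gamma_{A'}{}^{C'}\wedge\Gamma_{C'}{}^{B'} = \tfrac12\Omega_{ab}{}^{B'}{}_{A'}\theta^a\wedge\theta^b$. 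Here the crucial input is the anti self-dual Einstein hypothesis, which via \eqref{S-prime-curvature} gives $\Omega_{ab}{}^{C'}{}_{D'}\pi^{D'} = 2\Lambda\,\e_{AB}\,\d_{(A'}{}^{C'}\pi_{B')}$.

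Next I would compute $d(\d\pi^{A'})$. Differentiating $\d\pi^{A'} = d\pi^{A'} + \Gamma_{cB'}{}^{A'}\pi^{B'}\theta^c$ and using the curvature equation yields
\[
d(\d\pi^{A'}) = -\Gamma_{B'}{}^{A'}\wedge\d\pi^{B'} + \tfrac12\,\Omega_{ab}{}^{A'}{}_{B'}\,\pi^{B'}\,\theta^a\wedge\theta^b,
\]
and substituting \eqref{S-prime-curvature} the curvature term becomes a multiple of $\e_{AB}\,\pi_{(A'}\dots$ contracted with $\theta^{AA'}\wedge\theta^{BB'}$, which one checks is proportional to wedges of the $(1,0)$-forms $\pi_{A'}\theta^{AA'}$ with each other — in particular it lies in the ideal generated by $\d\pi^{A'}$ and $\pi_{A'}\theta^{AA'}$ and has no $(0,2)$-part. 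The point is that $W^+$-type curvature would produce a $\sigma_{A'}{}^{\overline{C'}}$-term coupling to $\overline{\pi}$ and hence a genuine $(0,2)$-obstruction; the self-dual Weyl curvature is exactly what vanishes here, and the Ricci-trace-free part vanishes by the Einstein condition. For the forms $\pi_{A'}\theta^{AA'}$, I would compute $d(\pi_{A'}\theta^{AA'}) = (\d\pi_{A'})\wedge\theta^{AA'} - \pi_{A'}\,\omega^{AA'}{}_{BB'}\wedge\theta^{BB'}$, expand the $SO(4)$ connection form in spinor terms as $\omega^{AA'}{}_{BB'} = \Gamma_B{}^A\,\d_{B'}{}^{A'} + \Gamma_{B'}{}^{A'}\,\d_B{}^A$, and observe that the $\Gamma_{B'}{}^{A'}$-piece reassembles $\d\pi_{A'}\wedge\theta^{AA'}$ with the covariant derivative while the $\Gamma_B{}^A$-piece wedges $\theta^{BA'}$ against $\pi_{A'}\theta^A{}_{B'}{}^{B'}$-type terms, all of which stay inside the $(1,0)$-ideal; no curvature enters at this stage since $d$ of a coframe component only produces connection terms.

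The main obstacle is purely bookkeeping: untangling the spinor decomposition of the $SO(4)$ connection and Riemann tensor, keeping the symmetrizations straight, and confirming that every term produced by $d$ genuinely lies in the differential ideal generated by $\{\d\pi^{A'}, \pi_{A'}\theta^{AA'}\}$ rather than merely in the full ideal including $(0,2)$-forms. Once the curvature term in $d(\d\pi^{A'})$ is pinned down to be $\propto \Lambda\,\e_{AB}\,\pi^{(A'}\theta^{BB')}\wedge\dots$ — manifestly a sum of products of $(1,0)$-generators — the Newlander--Nirenberg theorem (or the Frobenius criterion for the conjugate distribution $T^{0,1}$) finishes the argument. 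I would also remark that this is precisely the content of the Atiyah--Hitchin--Singer and Penrose construction: the integrability of the twistor almost complex structure on $\mathbb{P}(\mathbb{S}')$ is equivalent to $W^+ = 0$, and the lift to $\mathbb{S}'\setminus\{\bo\}$ (rather than its projectivization) additionally needs the Einstein condition to kill the $\Phi_{ABA'B'}$-contribution, which is why we assume $g$ is anti self-dual Einstein.
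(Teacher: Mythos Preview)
Your approach is essentially the same as the paper's: verify that the exterior derivatives of the $(1,0)$-coframe $\{\d\pi^{A'}, \pi_{A'}\theta^{AA'}\}$ have no $(0,2)$-part, using the curvature formula \eqref{S-prime-curvature} to control $d(\d\pi^{A'})$. The only cosmetic difference is that the paper fixes a point and chooses frames with $\Gamma_{cB}{}^A(x)=\Gamma_{cB'}{}^{A'}(x)=0$ there, which kills the connection terms you track explicitly; both routes are fine.

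However, your closing remark is wrong. You claim that the lift to $\mathbb{S}'\setminus\{\bo\}$ (as opposed to $\mathbb{P}(\mathbb{S}')$) requires the Einstein condition to kill the $\Phi_{ABA'B'}$-contribution. In fact the integrability of $\mathbb{I}$ on $\mathbb{S}'\setminus\{\bo\}$ follows from anti self-duality alone: if $\wt\Psi_{A'B'C'D'}=0$ but $\Phi_{ABA'B'}\neq 0$, the extra term in $d(\d\pi^{A'})$ is $-\tfrac12\Phi_{EF}{}^{A'}{}_{B'}\pi^{B'}\e_{E'F'}\theta^e\wedge\theta^f$, and contracting this with any vector in $T^{0,1}$ (i.e.\ with horizontal part in $\wt{\mathcal A}_{[\pi]}$) yields a $(1,0)$-form, so there is still no $(0,2)$-component. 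The Einstein hypothesis is imposed in this section for the hyperk\"ahler construction that follows, not for Theorem~\ref{I-integrable} itself. This does not invalidate your proof under the stated hypotheses, but the conceptual explanation should be corrected.
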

\begin{proof}
We will show that $d(\d\pi^{A'})$ and $d(\pi_{A'}\theta^{AA'})$ do not contain $(0, 2)$-parts.
We fix a point $x\in X$ and take local frames of $\mathbb{S}, \mathbb{S}'$ such that $\Gamma_{cB}{}^{A}(x)=\Gamma_{cB'}{}^{A'}(x)=0$. These frames determine a local coframe $(\theta^a)$ on $X$ via the isomorphism $\mathbb{C}TX\cong\mathbb{S}\otimes\mathbb{S}'$. Then we have
\begin{equation}\label{at-x}
d\e_{A'B'}=0, \quad d\theta^a=0, \quad d\gamma^{AA'}_b=0, \quad 
d(\Gamma_{eB'}{}^{A'}\theta^e)=\frac{1}{2}\Omega_{ef}{}^{A'}{}_{B'}\theta^e\wedge\theta^f
\end{equation}
at $x$. Hence, by \eqref{S-prime-curvature} we have
\begin{equation}\label{d-delta}
\begin{aligned}
d(\d\pi^{A'})&=\frac{1}{2}\Omega_{ef}{}^{A'}{}_{B'}\pi^{B'}\theta^e\wedge\theta^f 
=\Lambda\e_{EF}\d_{(E'}{}^{A'}\pi_{F')}\theta^e\wedge\theta^f \\
&=\Lambda\e_{EF}\d_{E'}{}^{A'}\pi_{F'}\theta^e\wedge\theta^f \equiv 0\quad {\rm mod}\ \pi_{F'}\theta^{FF'}, \\
d(\pi_{A'}\theta^{AA'})&=\d\pi_{A'}\wedge \theta^{AA'}\equiv 0\quad {\rm mod}\ \d\pi^{B'}
\end{aligned}
\end{equation}
at $x$. Thus, $\mathbb{I}$ is integrable.
\end{proof}
\begin{rem}
As is proved in \cite{P, AHS}, the integrability of $\mathbb{I}$ follows solely from the anti self-duality of $g$. In fact, if we only have $\Psi_{A'B'C'D'}=0$, the derivative $d(\d\pi^{A'})$ above has the extra term  $-(1/2)\Phi_{EF}{}^{A'}{}_{B'}\pi^{B'}\e_{E'F'}\theta^e\wedge\theta^f$, but the contraction of this form with a vector in $T^{0, 1}$ becomes a $(1, 0)$-form and hence $d(\d\pi^{A'})$ does not have $(0, 2)$-part.
\end{rem}

For each $\pi\in \mathbb{S}'_x\setminus\{0\}$, we set
\[
\|\pi\|^2:= (\sigma\pi)_{A'}\pi^{A'}=\sigma_{A'\overline{B'}}\pi^{A'}\overline{\pi^{B'}}\,(>0)
\]
and define 
\[
I_{B'}{}^{A'}:=\frac{-i}{\|\pi\|^2}\bigl(\pi_{B'}(\sigma\pi)^{A'}+(\sigma\pi)_{B'}\pi^{A'}\bigr),
\]
which satisfies
\begin{equation}\label{I-map}
I_{B'}{}^{A'}\pi^{B'}=-i\pi^{A'}, \quad I_{B'}{}^{A'}(\sigma\pi)^{B'}=i(\sigma\pi)^{A'}.
\end{equation}
Since $\pi^{A'}$ and $(\sigma\pi)^{A'}$ form a basis of $\mathbb{S}'_x$, these equations characterize $I_{B'}{}^{A'}$. We define an endomorphism
\[
I_\pi\colon \mathbb{C}T_x X\longrightarrow \mathbb{C}T_x X
\]
by
\[
v^{AA'}\frac{\partial}{\partial x^a}\longmapsto v^{AB'}I_{B'}{}^{A'}\frac{\partial}{\partial x^a}.
\]
Then, \eqref{I-map} implies that $\mathcal{A}_{[\pi]}, \mathcal{A}_{[\sigma(\pi)]}$ are eigenspaces of $I_\pi$ with eigenvalues $-i, i$ respectively. Thus, $I_\pi$ gives an almost complex structure on $T_x X$, and in terms of the decomposition
\begin{equation}\label{T-decom}
T_\pi(\mathbb{S}'\setminus\{\bo\})=\wt{T_x X}\oplus T_\pi({\rm fiber})\cong T_x X\oplus \mathbb{S}'_x,
\end{equation}
the complex structure $\mathbb{I}$ can be expressed as
\[
\mathbb{I}=I_\pi \oplus I_{\mathbb{S}'_x},
\]
where $I_{\mathbb{S}'_x}$ is the standard complex structure on the fiber $\mathbb{S}'_x$.


\subsection{Another complex structure $\mathbb{J}$ on $\mathbb{S}'\setminus\{\bo\}$}\label{def-J}
To construct a hyperk\"ahler structure on $\mathbb{S}'\setminus\{\bo\}$, we define another complex structure $\mathbb{J}$ which anti-commutes with $\mathbb{I}$.

First, for each $\pi\in\mathbb{S}'_x\setminus\{0\}$, we define an endomorphism $J_\pi\colon \mathbb{C}T_x X\to \mathbb{C}T_x X$
by
\[
v^{AA'}\frac{\partial}{\partial x^a}\longmapsto v^{AB'}J_{B'}{}^{A'}\frac{\partial}{\partial x^a},
\]
where 
\[
J_{B'}{}^{A'}:=\frac{-1}{\|\pi\|^2}
\bigl(\pi_{B'}\pi^{A'}+(\sigma\pi)_{B'}(\sigma\pi)^{A'}\bigr).
\]
\begin{prop}
The endomorphism $J_\pi$ is a real operator and satisfies 
\begin{equation}\label{I-J}
J_\pi^2=-{\rm id}, \quad I_\pi J_\pi=-J_\pi I_\pi.
\end{equation}
\end{prop}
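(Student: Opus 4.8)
The plan is to verify the two identities in \eqref{I-J} by direct spinor computation, using the explicit formula for $J_{B'}{}^{A'}$ together with the two basic facts about the basis $\{\pi^{A'}, (\sigma\pi)^{A'}\}$ of $\mathbb{S}'_x$: namely $\|\pi\|^2 = (\sigma\pi)_{A'}\pi^{A'}$ and, consequently, $\pi_{A'}(\sigma\pi)^{A'} = -\|\pi\|^2$, while $\pi_{A'}\pi^{A'} = (\sigma\pi)_{A'}(\sigma\pi)^{A'} = 0$ by skew-symmetry of $\e_{A'B'}$. First I would establish reality of $J_\pi$. Since $J_\pi$ acts on $\mathbb{C}T_xX$ through $v^{AA'}\mapsto v^{AB'}J_{B'}{}^{A'}$ and complex conjugation of tangent vectors is implemented by $\sigma\otimes\sigma$, reality of the operator amounts to the statement that conjugating $J_{B'}{}^{A'}$ via $\sigma$ on both indices returns $J_{B'}{}^{A'}$. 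This follows because $\overline{\pi^{A'}}$ is carried to $(\sigma\pi)^{A'}$ up to the relevant $\sigma$-factors, so the two terms $\pi_{B'}\pi^{A'}$ and $(\sigma\pi)_{B'}(\sigma\pi)^{A'}$ are exchanged under the conjugation, leaving the symmetric combination fixed; one uses $\sigma^2 = -\mathrm{id}$ from \eqref{sigma-square} and the conjugation rule for lowered indices derived in \S\ref{spinor-calculus}.

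Next I would check $J_\pi^2 = -\mathrm{id}$. It suffices to evaluate $J_{B'}{}^{C'}J_{C'}{}^{A'}$ on the basis $\pi^{B'}$ and $(\sigma\pi)^{B'}$. Contracting $J_{B'}{}^{A'}$ with $\pi^{B'}$ gives, using $\pi_{B'}\pi^{B'}=0$ and $(\sigma\pi)_{B'}\pi^{B'} = -\|\pi\|^2$ (note the index-order sign), the clean result $J_{B'}{}^{A'}\pi^{B'} = (\sigma\pi)^{A'}$; similarly $J_{B'}{}^{A'}(\sigma\pi)^{B'} = -\pi^{A'}$. Applying $J_\pi$ once more then yields $J^2\pi^{A'} = -\pi^{A'}$ and $J^2(\sigma\pi)^{A'} = -(\sigma\pi)^{A'}$, so $J_\pi^2 = -\mathrm{id}$. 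For the anticommutation relation $I_\pi J_\pi = -J_\pi I_\pi$, I would run the same basis computation: from \eqref{I-map} we have $I_\pi\pi^{A'} = -i\pi^{A'}$ and $I_\pi(\sigma\pi)^{A'} = i(\sigma\pi)^{A'}$, so $I_\pi J_\pi \pi^{A'} = I_\pi(\sigma\pi)^{A'} = i(\sigma\pi)^{A'}$ while $J_\pi I_\pi\pi^{A'} = -i J_\pi\pi^{A'} = -i(\sigma\pi)^{A'}$, and likewise on $(\sigma\pi)^{A'}$; since the two operators agree up to sign on a basis, $I_\pi J_\pi = -J_\pi I_\pi$. Because all these endomorphisms act only on the primed factor, the identities pass verbatim to $\mathbb{C}T_xX = \mathbb{S}_x\otimes\mathbb{S}'_x$.

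The only genuinely delicate point is bookkeeping of signs coming from the index-raising conventions $\eta_{A'} = \eta^{B'}\e_{B'A'}$: one must be careful that $(\sigma\pi)_{A'}\pi^{A'}$ and $\pi_{A'}(\sigma\pi)^{A'}$ differ by a sign, and that the positivity $\|\pi\|^2>0$ is consistent with the chosen ordering in the definition of $\|\pi\|^2$. I expect this is where an unwary computation would slip, but it is routine once the ordering is fixed; there is no structural obstacle. I would present the proof compactly by just exhibiting the action of $J_\pi$, $I_\pi$, and their products on the basis $\{\pi^{A'},(\sigma\pi)^{A'}\}$, from which both assertions in \eqref{I-J} are immediate.
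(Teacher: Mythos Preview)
Your approach is essentially identical to the paper's: both arguments establish the action of $J_{B'}{}^{A'}$ on the basis $\{\pi^{A'}, (\sigma\pi)^{A'}\}$, deduce \eqref{I-J} from this together with \eqref{I-map}, and obtain reality from the fact that $J_{B'}{}^{A'}$ commutes with $\sigma$ (equivalently, conjugation exchanges the two summands in the definition of $J$).

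However, you have precisely the sign slip you warned yourself about. By definition $\|\pi\|^2=(\sigma\pi)_{A'}\pi^{A'}$, so $(\sigma\pi)_{B'}\pi^{B'}=+\|\pi\|^2$, not $-\|\pi\|^2$ as you wrote; consequently
\[
J_{B'}{}^{A'}\pi^{B'}=-(\sigma\pi)^{A'},\qquad J_{B'}{}^{A'}(\sigma\pi)^{B'}=+\pi^{A'},
\]
the reverse of your signs (this is exactly the paper's \eqref{J-map}). The error is harmless for the proposition itself---$J_\pi^2=-\mathrm{id}$, the anticommutation with $I_\pi$, and the reality argument are all insensitive to an overall sign in $J_\pi$---but the intermediate formulas should be corrected.
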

\begin{proof}
For the basis $(\pi, \sigma(\pi))$ of $\mathbb{S}'_x$, it holds that
\begin{equation}\label{J-map}
J_{B'}{}^{A'}\pi^{B'}=-(\sigma \pi)^{A'}, \quad J_{B'}{}^{A'}(\sigma\pi)^{B'}=\pi^{A'}.
\end{equation}
By \eqref{I-map} and \eqref{J-map}, we have 
\[
J_{B'}{}^{C'}J_{C'}{}^{A'}=-\d_{B'}{}^{A'},\quad  I_{B'}{}^{C'}J_{C'}{}^{A'}=-J_{B'}{}^{C'}I_{C'}{}^{A'}.
\]
Hence we obtain \eqref{I-J}. It also follows from \eqref{J-map} that $J_{B'}{}^{A'}$ commutes with $\sigma\colon \mathbb{S}'_x\to \mathbb{S}'_x$. Thus, $J_\pi$ commutes with the complex conjugation $\sigma\otimes\sigma\colon \mathbb{C}T_xX\to \mathbb{C}T_xX$ and becomes a real operator. 
\end{proof}
By using the decomposition \eqref{T-decom}, we define an almost complex structure $\mathbb{J}$ on $\mathbb{S}'\setminus\{\bo\}$ by
\[
\mathbb{J}:=J_\pi\oplus \sigma.
\]
Since $J_\pi$ anti-commutes with $I_\pi$ by \eqref{I-J} and $\sigma$ is $\mathbb{C}$-anti-linear, we have
\[
\mathbb{I}\mathbb{J}=-\mathbb{J}\mathbb{I}.
\]
The $(0, 1)$-subspace of $\mathbb{C}T_\pi(\mathbb{S}'\setminus\{\bo\})$ with respect to $\mathbb{J}$ is given by
\begin{align*}
{}^{\mathbb{J}}T^{0, 1} _\pi(\mathbb{S}'\setminus\{\bo\})
&=\wt{\mathcal{A}}_{[\pi-i\sigma(\pi)]}\oplus\{1\otimes\eta+i\otimes\sigma(\eta)\ |\ \eta\in{\mathbb{S}'}_x\} \\
&\subset\mathbb{C}\wt{T_x X}\oplus (\mathbb{C}\otimes_{\mathbb{R}}\mathbb{S}'_x)\cong
\mathbb{C}T_\pi(\mathbb{S}'\setminus\{\bo\}).
\end{align*}
Note that the isomprphism $\mathbb{S}'_x\cong T_\pi(\rm fiber)$ is given by 
\[
\eta^{A'}\longleftrightarrow \eta^{A'}\frac{\partial}{\partial \pi^{A'}}+\overline{\eta^{A'}}\frac{\partial}{\partial \overline{\pi^{A'}}}
\]
and hence $1\otimes\eta+i\otimes\sigma(\eta)\in \mathbb{C}\otimes_{\mathbb{R}}\mathbb{S}'_x$ corresponds to the complex vector
\[
\eta^{A'}\frac{\partial}{\partial \pi^{A'}}+\overline{\eta^{A'}}\frac{\partial}{\partial \overline{\pi^{A'}}}
+i\Bigl((\sigma\eta)^{A'}\frac{\partial}{\partial \pi^{A'}}+\overline{(\sigma\eta)^{A'}}\frac{\partial}{\partial \overline{\pi^{A'}}}\Bigr).
\]
Thus, the set of 1-forms
\[
\d\pi^{A'}\!\!-i\sigma^{A'}{}_{\!\!\overline{B'}}\overline{\d\pi^{B'}}, \quad 
\bigl(\pi_{A'}-i(\sigma\pi)_{A'}\bigr)\theta^{AA'}
\]
provides a local frame for ${}^\mathbb{J}T^*_{1, 0}(\mathbb{S}'\setminus\{\bo\})$ as they annihilate 
${}^{\mathbb{J}}T^{0, 1}(\mathbb{S}'\setminus\{\bo\})$.

The integrability of $\mathbb{J}$ can be shown in a similar way to the case of $\mathbb{I}$ (see Theorem \ref{J-parallel} for an alternative proof):
\begin{theorem}\label{J-integrable}
The almost complex structure $\mathbb{J}$ is integrable.
\end{theorem}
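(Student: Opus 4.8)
The plan is to mimic the proof of Theorem~\ref{I-integrable}: work at a fixed point $x\in X$ in frames of $\mathbb{S},\mathbb{S}'$ for which $\Gamma_{cB}{}^A(x)=\Gamma_{cB'}{}^{A'}(x)=0$, so that \eqref{at-x} holds, and show that $d(\d\pi^{A'}-i\sigma^{A'}{}_{\!\!\overline{B'}}\overline{\d\pi^{B'}})$ and $d\bigl((\pi_{A'}-i(\sigma\pi)_{A'})\theta^{AA'}\bigr)$ have no ${}^{\mathbb{J}}(0,2)$-part, i.e.\ are congruent to $0$ modulo the span of the ${}^{\mathbb{J}}(1,0)$-forms listed just before the statement. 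Since $\sigma$ and $\e_{A'B'}$ are parallel, at $x$ we have $d\sigma^{A'}{}_{\!\!\overline{B'}}=0$, and $d(\overline{\d\pi^{B'}})=\overline{d(\d\pi^{B'})}$ is computed by conjugating \eqref{d-delta}, using the curvature of $\nabla$ on $\overline{\mathbb{S}'}$, for which we already have the clean formula \eqref{Omega-star-pi}.

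Concretely, from \eqref{d-delta} at $x$ we have $d(\d\pi^{A'})=\Lambda\,\e_{EF}\,\d_{E'}{}^{A'}\pi_{F'}\,\theta^e\wedge\theta^f$ (after symmetrizing, dropping the term proportional to $\pi_{A'}\theta^{AA'}$), and from \eqref{Omega-star-pi} together with $\Omega^*$ being the curvature of $\overline{\mathbb{S}'}$ we get $d(\overline{\d\pi^{A'}})$, at $x$, equal to $\tfrac12\Omega^*_{ef}{}^{\overline{A'}}{}_{\overline{B'}}\overline{\pi^{B'}}\,\theta^e\wedge\theta^f = \Lambda\,\e_{EF}\,\sigma_{(E'}{}^{\overline{A'}}\sigma_{F')}{}^{\overline{G'}}\overline{\pi_{G'}}\,\theta^e\wedge\theta^f = -\Lambda\,\e_{EF}\,\sigma_{(E'}{}^{\overline{A'}}(\sigma\pi)_{F')}\,\theta^e\wedge\theta^f$. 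Contracting the combination $d(\d\pi^{A'})-i\,\sigma^{A'}{}_{\!\!\overline{B'}}\,d(\overline{\d\pi^{B'}})$ against a vector in ${}^{\mathbb{J}}T^{0,1}$ — whose fiber part pairs $\pi_{A'}$-contractions with $(\sigma\pi)_{A'}$-contractions and whose horizontal part is $\wt{\mathcal{A}}_{[\pi-i\sigma(\pi)]}$ — one checks that the $\theta^e\wedge\theta^f$ factor is killed (the horizontal leg lands in the $\a$-plane of $[\pi-i\sigma(\pi)]$, so $\e_{EF}\xi^E\xi^F=0$ for the relevant $\xi$), leaving at worst a $(1,0)$-form. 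The second form $d\bigl((\pi_{A'}-i(\sigma\pi)_{A'})\theta^{AA'}\bigr)$ expands, at $x$, to $\bigl(\d\pi_{A'}-i\,\overline{\d\pi_{A'}}\bigr)\wedge\theta^{AA'}$ up to the parallel tensors, i.e.\ $\bigl(\d\pi_{A'}-i\,\sigma_{A'}{}^{\overline{B'}}\overline{\d\pi_{B'}}\bigr)\wedge\theta^{AA'}$ modulo terms already in the ${}^{\mathbb{J}}(1,0)$-span, which is manifestly congruent to $0$ modulo the first family of ${}^{\mathbb{J}}(1,0)$-forms.

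The main obstacle, and the place to be careful, is the bookkeeping of \emph{which} complex structure's $(0,2)$-part one is annihilating: the forms $\d\pi^{A'}$ and $\theta^{AA'}$ are of mixed type with respect to $\mathbb{J}$, so one cannot simply reuse the type-decomposition from the proof of Theorem~\ref{I-integrable}. One must carry out the contraction against the explicit ${}^{\mathbb{J}}T^{0,1}$-frame (the horizontal lift $\wt{\mathcal{A}}_{[\pi-i\sigma(\pi)]}$ together with the fiber vectors $\eta^{A'}\partial_{\pi^{A'}}+\overline{\eta^{A'}}\partial_{\overline{\pi^{A'}}}+i\bigl((\sigma\eta)^{A'}\partial_{\pi^{A'}}+\overline{(\sigma\eta)^{A'}}\partial_{\overline{\pi^{A'}}}\bigr)$) and verify that every resulting expression still annihilates a second such vector. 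The key algebraic fact making this work is the interplay of \eqref{S-prime-curvature} and \eqref{Omega-star-pi}: the curvatures of $\mathbb{S}'$ and $\overline{\mathbb{S}'}$ both produce a factor $\e_{EF}$ on the horizontal indices, which vanishes on the null $\a$-plane $\mathcal{A}_{[\pi-i\sigma(\pi)]}$, exactly as in the $\mathbb{I}$ case; everything else is either parallel at $x$ or proportional to one of the listed $(1,0)$-forms. A cleaner route, noted parenthetically in the statement, is to defer to Theorem~\ref{J-parallel}, where $\mathbb{J}$ is shown to be $\wt\nabla$-parallel for the hyperk\"ahler connection and hence automatically integrable; the direct argument above is the self-contained alternative.
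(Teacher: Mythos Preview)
Your overall strategy---mimic Theorem~\ref{I-integrable}, compute the exterior derivatives of the ${}^{\mathbb{J}}(1,0)$-coframe at a point where the connection vanishes, and show they have no $(0,2)$-part---is exactly what the paper does, and the second form $d\bigl((\pi_{A'}-i(\sigma\pi)_{A'})\theta^{AA'}\bigr)$ is handled correctly.

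However, your justification for the first form has a genuine gap. You assert that contracting
\[
d(\d\pi^{A'})-i\,\sigma^{A'}{}_{\!\!\overline{B'}}\,d(\overline{\d\pi^{B'}})
\]
against a horizontal ${}^{\mathbb{J}}(0,1)$-vector kills the $\theta^e\wedge\theta^f$ term ``because $\e_{EF}\xi^E\xi^F=0$.'' This is vacuous: $\e_{EF}$ is antisymmetric, so $\e_{EF}\xi^E\xi^F=0$ for \emph{every} $\xi$, and in any case a $2$-form always vanishes on $(V,V)$. The horizontal $(0,1)$-space $\wt{\mathcal{A}}_{[\pi-i\sigma(\pi)]}$ is two-dimensional, spanned by $\mu^{A}(\pi-i\sigma\pi)^{A'}$ as $\mu$ ranges over $\mathbb{S}_x$; for two such vectors with independent unprimed parts $\mu_1,\mu_2$ one has $\e_{EF}\mu_1^E\mu_2^F\neq0$, so the unprimed skew form does \emph{not} do the work.

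What actually makes the $(0,2)$-part vanish---and what the paper writes down directly---is the \emph{primed} factor that appears once you combine your two pieces. Using \eqref{d-delta} and \eqref{d-sigma-delta} (equivalently \eqref{Omega-star-pi}) at $x$ gives
\[
d\bigl(\d\pi^{A'}-i\,\sigma^{A'}{}_{\!\!\overline{B'}}\overline{\d\pi^{B'}}\bigr)
=\Lambda\,\e_{EF}\,\d_{E'}{}^{A'}\bigl(\pi_{F'}-i(\sigma\pi)_{F'}\bigr)\,\theta^e\wedge\theta^f,
\]
which is visibly a wedge of the ${}^{\mathbb{J}}(1,0)$-form $\bigl(\pi_{F'}-i(\sigma\pi)_{F'}\bigr)\theta^{FF'}$ with another $1$-form, hence has no $(0,2)$-part. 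Equivalently, $(\pi_{F'}-i(\sigma\pi)_{F'})(\pi-i\sigma\pi)^{F'}=0$; the mechanism is that the $(1,0)$-coframe annihilates $\mathcal{A}_{[\pi-i\sigma(\pi)]}$ on the primed side, not any identity involving $\e_{EF}$. Once you make this combination explicit, the rest of your write-up goes through.
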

\begin{proof}
We fix a point $x\in X$ and take local frames for $\mathbb{S}, \mathbb{S}'$ as in the proof of Theorem \ref{I-integrable}. Note that in addition to \eqref{at-x} we also have $d\sigma^{A'}{}_{\!\!\overline{B'}}(x)=0$ since $\sigma$ is parallel. Then, by \eqref{Omega-star-pi} we have
\begin{equation}\label{d-sigma-delta}
\begin{aligned}
d\bigl(\sigma^{A'}{}_{\!\!\overline{B'}}\overline{\d\pi^{B'}}\bigr)
&=\sigma^{A'}{}_{\!\!\overline{B'}}
\frac{1}{2}\Omega^*_{ef}{}^{\overline{B'}}{}_{\overline{C'}}\overline{\pi^{C'}}\theta^e\wedge\theta^f \\
&=\Lambda\e_{EF}\sigma^{A'}{}_{\!\!\overline{B'}}\sigma_{(E'}{}^{\overline{B'}}(\sigma\pi)_{F')}\theta^e\wedge\theta^f \\
&=\Lambda\e_{EF}\d_{(E'}{}^{A'}(\sigma\pi)_{F')}\theta^e\wedge\theta^f \\
&=\Lambda\e_{EF}\d_{E'}{}^{A'}(\sigma\pi)_{F'}\theta^e\wedge\theta^f.
\end{aligned}
\end{equation}
By \eqref{d-delta} and \eqref{d-sigma-delta}, we have
\begin{align*}
d(\d\pi^{A'}\!\!-i\sigma^{A'}{}_{\!\!\overline{B'}}\overline{\d\pi^{B'}})
&=\Lambda\e_{EF}\d_{E'}{}^{A'}\bigl(\pi_{F'}
-i(\sigma\pi)_{F'}\bigr)\theta^e\wedge\theta^f \\
&\equiv 0\quad {\rm mod}\ \bigl(\pi_{F'}
-i(\sigma\pi)_{F'}\bigr)\theta^{FF'}.
\end{align*}
We also have
\begin{align*}
d\bigl(\bigl(\pi_{A'}-i(\sigma\pi)_{A'}\bigr)\theta^{AA'}\bigr)&=
\bigl(\d\pi_{A'}-i\sigma_{A'\overline{B'}}\overline{\d\pi^{B'}}\bigr)\wedge\theta^{AA'} \\
&\equiv 0 \quad {\rm mod}\ \d\pi^{C'}\!\!-i\sigma^{C'}{}_{\!\!\overline{B'}}\overline{\d\pi^{B'}}.
\end{align*}
Thus, $\mathbb{J}$ is integrable.
\end{proof}
\begin{rem}
As in the case of $\mathbb{I}$, one can prove the integrability of $\mathbb{J}$ only by the anti self-duality of $g$.
\end{rem}


\subsection{The hyperk\"ahler metric $\wt g$ on $\mathbb{S}'\setminus\{\bo\}$}\label{hyper}
We define a 1-form $\wt\tau$ on $\mathbb{S}'\setminus\{\bo\}$ by
\[
\wt\tau:=\pi_{B'}\d\pi^{B'}=\e_{A'B'}\pi^{A'}\d\pi^{B'}.
\]
Then, $\tau$ is a $(1, 0)$-form with respect to $\mathbb{I}$, and by a similar computation to the proof of 
Theorem \ref{I-integrable}, the exterior derivative 
\[
\wt\omega:=d\wt\tau
\]
is given by
\begin{equation}\label{omega}
\wt\omega=\e_{A'B'}\d\pi^{A'}\wedge\d\pi^{B'}+\Lambda\e_{AB}\pi_{A'}\pi_{B'}\theta^a\wedge\theta^b.
\end{equation}
Since this is a $(2, 0)$-form with respect to $\mathbb{I}$, both $\wt\tau$ and $\wt\omega$ are holomorphic with respect to $\mathbb{I}$. 

Next we define a 2-form $\wt\omega_{\mathbb{J}}$ on $\mathbb{S}'\setminus\{\bo\}$ by 
\begin{equation}
\wt\omega_{\mathbb{J}}(V, W):=\frac{-i}{2}\bigl(\wt\omega(V, \mathbb{J} W)-\wt\omega(W, \mathbb{J} V)\bigr).
\end{equation}
Note that for a 2-form $\varphi\wedge\psi$, we have
\begin{equation}\label{2-form-J}
(\varphi\wedge\psi)(V, \mathbb{J} W)-(\varphi\wedge\psi)(W, \mathbb{J} V)
=\bigl((\varphi\circ\mathbb{J})\wedge\psi+\varphi\wedge(\psi\circ\mathbb{J})\bigr)(V, W).
\end{equation}

\begin{prop}
We have
\begin{equation}\label{omega-J}
\wt\omega_{\mathbb{J}}=i\sigma_{A'\overline{B'}}\d\pi^{A'}\wedge \overline{\d\pi^{B'}}-i\Lambda\e_{AB}\pi_{A'}(\sigma\pi)_{B'} \theta^a\wedge\theta^b.
\end{equation}
\end{prop}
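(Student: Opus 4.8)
The plan is to compute $\wt\omega_{\mathbb{J}}$ directly from its definition \eqref{omega-J} by feeding the explicit expression \eqref{omega} for $\wt\omega$ into the formula \eqref{2-form-J}. Write $\wt\omega = \wt\omega_1 + \wt\omega_2$ where $\wt\omega_1 = \e_{A'B'}\d\pi^{A'}\wedge\d\pi^{B'}$ is the ``fiber'' piece and $\wt\omega_2 = \Lambda\e_{AB}\pi_{A'}\pi_{B'}\theta^a\wedge\theta^b$ is the ``base'' piece. Since $\mathbb{J}$ preserves the splitting $T(\mathbb{S}'\setminus\{\bo\}) = \wt{TX}\oplus T(\text{fiber})$ (acting as $J_\pi$ on the horizontal part and as $\sigma$ on the fiber), each of $\wt\omega_1$ and $\wt\omega_2$ transforms separately, and I can treat the two pieces independently.

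For the fiber piece, the relevant precompositions are $\d\pi^{A'}\circ\mathbb{J}$. On the fiber directions $\mathbb{J}$ is the anti-linear map $\sigma$, so one gets $(\d\pi^{A'}\circ\mathbb{J}) = \sigma^{A'}{}_{\!\!\overline{B'}}\,\overline{\d\pi^{B'}}$ (reading off from the description of ${}^{\mathbb{J}}T^{0,1}$, which is annihilated by $\d\pi^{A'} - i\sigma^{A'}{}_{\!\!\overline{B'}}\overline{\d\pi^{B'}}$). Plugging into \eqref{2-form-J}, $\frac{-i}{2}$ times $\bigl((\d\pi^{A'}\circ\mathbb{J})\wedge\d\pi^{B'} + \d\pi^{A'}\wedge(\d\pi^{B'}\circ\mathbb{J})\bigr)\e_{A'B'}$ collapses, using the skew-symmetry of $\e_{A'B'}$, to a single term proportional to $\sigma_{A'\overline{B'}}\d\pi^{A'}\wedge\overline{\d\pi^{B'}}$; tracking the factors of $i$ and the $\e$-index gymnastics gives the coefficient $i$ claimed in \eqref{omega-J}. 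For the base piece, I use that on horizontal vectors $\mathbb{J}$ acts by $J_\pi$, i.e. on the coframe $\theta^a = \theta^{AA'}$ it sends $\theta^{AA'}$ to $\theta^{AB'}J_{B'}{}^{A'}$ (with a sign/transpose bookkeeping coming from precomposition). Then $\pi_{A'}\theta^{AA'}\circ\mathbb{J}$ involves $\pi_{A'}J_{B'}{}^{A'} = ?$; here \eqref{J-map} gives $J_{B'}{}^{A'}\pi^{B'} = -(\sigma\pi)^{A'}$, equivalently $\pi_{A'}J_{B'}{}^{A'} = (\sigma\pi)_{B'}$ after lowering indices with $\e$ (being careful with the antisymmetry of $\e$). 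Feeding this through \eqref{2-form-J} converts $\e_{AB}\pi_{A'}\pi_{B'}\theta^a\wedge\theta^b$ into something proportional to $\e_{AB}\pi_{A'}(\sigma\pi)_{B'}\theta^a\wedge\theta^b$, and the overall $\frac{-i}{2}\cdot\Lambda\cdot(\text{two equal terms})$ yields the coefficient $-i\Lambda$.

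The main obstacle I anticipate is purely bookkeeping: getting the signs and the placement of primed indices right when precomposing a $2$-form with $\mathbb{J}$, because $\mathbb{J}$ acts on the \emph{tangent} side whereas \eqref{omega} is written in terms of the dual coframe $\d\pi^{A'}$, $\theta^{AA'}$, so one must transpose, and because $J_{B'}{}^{A'}$ is not symmetric while $\e_{A'B'}$ is antisymmetric, so several would-be cross terms must be shown to cancel or combine. The anti-linearity of $\sigma$ on the fiber is the second delicate point: the identity $\d\pi^{A'}\circ\mathbb{J} = \sigma^{A'}{}_{\!\!\overline{B'}}\overline{\d\pi^{B'}}$ must be extracted carefully from the explicit $(0,1)$-vector $\eta^{A'}\partial_{\pi^{A'}} + \overline{\eta^{A'}}\partial_{\overline{\pi^{A'}}} + i\bigl((\sigma\eta)^{A'}\partial_{\pi^{A'}} + \overline{(\sigma\eta)^{A'}}\partial_{\overline{\pi^{A'}}}\bigr)$ displayed above, and one needs \eqref{sigma-square} and \eqref{sigma-sym} to see that $\sigma_{A'\overline{B'}}\d\pi^{A'}\wedge\overline{\d\pi^{B'}}$ is the correctly normalized real $(1,1)$-form. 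Once the two pieces are assembled, comparing with \eqref{omega-J} finishes the proof; a consistency check is that $\wt\omega_{\mathbb{J}}$ as computed is manifestly real (the fiber term $i\sigma_{A'\overline{B'}}\d\pi^{A'}\wedge\overline{\d\pi^{B'}}$ is real since $\sigma_{A'\overline{B'}}$ is a Hermitian form and, by \eqref{sigma-sym}, symmetric, and the base term is real because $J_\pi$ is a real operator), which it must be.
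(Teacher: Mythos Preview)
Your proposal is correct and follows essentially the same approach as the paper's proof: both compute $\wt\omega_{\mathbb{J}}$ by applying the identity \eqref{2-form-J} to the explicit expression \eqref{omega} for $\wt\omega$, using the precompositions $\d\pi^{A'}\circ\mathbb{J}=\sigma^{A'}{}_{\!\!\overline{B'}}\overline{\d\pi^{B'}}$ and $\theta^a\circ\mathbb{J}=J_{B'}{}^{A'}\theta^{AB'}$, and then simplifying via $\pi_{B'}J_{C'}{}^{B'}=(\sigma\pi)_{C'}$. The paper's version is just terser, stating these precompositions at the outset and carrying out the substitution in two lines without the separate discussion of the fiber and base pieces.
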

\begin{proof}
The local coframe $(\theta^a, \d\pi^{A'}, \overline{\d\pi^{A'}})$ on $\mathbb{S}'\setminus\{\bo\}$ is adapted to the decomposition
\[
\mathbb{C}T(\mathbb{S}'\setminus\{\bo\})=\mathbb{C}\wt{TX}\oplus \mathbb{C}T({\rm fiber}).
\] 
Hence, by the definition of $\mathbb{J}$, we have
\[
 \d\pi^{A'}\circ\mathbb{J}=\sigma^{A'}{}_{\!\!\overline{B'}}\overline{\d\pi^{B'}}, \quad 
 \theta^a\circ\mathbb{J}=J_{B'}{}^{A'}\theta^{AB'}.
\]
Then, applying the formula \eqref{2-form-J} to \eqref{omega}, we compute as
\begin{align*}
\wt\omega_{\mathbb{J}}&=-i \e_{A'B'}\d\pi^{A'}\wedge \sigma^{B'}{}_{\!\!\overline{C'}}\overline{\d\pi^{C'}}-i\Lambda \e_{AB}\pi_{A'}\pi_{B'}J_{C'}{}^{B'}\theta^a\wedge\theta^{BC'} \\
&=i \sigma_{A'\overline{C'}}\d\pi^{A'}\wedge \overline{\d\pi^{C'}}-i\Lambda \e_{AB}\pi_{A'}(\sigma\pi)_{C'}\theta^a\wedge\theta^{BC'}.
\end{align*}
Thus we obtain \eqref{omega-J}.
\end{proof}

\begin{theorem}\label{thm-potential}
We have
\begin{equation}\label{potential}
i\partial\overline{\partial}\|\pi\|^2=\wt\omega_{\mathbb{J}},
\end{equation}
where $\partial\overline{\partial}$ is with respect to $\mathbb{I}$.
In particular, $\wt\omega_{\mathbb{J}}$ is a closed real $(1, 1)$-form.
\end{theorem}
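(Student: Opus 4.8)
The plan is to compute $i\partial\overline{\partial}\|\pi\|^2$ directly in the local coframe $(\theta^a, \d\pi^{A'}, \overline{\d\pi^{A'}})$ adapted to $\mathbb{C}T(\mathbb{S}'\setminus\{\bo\}) = \mathbb{C}\wt{TX}\oplus\mathbb{C}T(\mathrm{fiber})$ and match the result against the expression \eqref{omega-J} for $\wt\omega_{\mathbb{J}}$ just proved. First I would note that $\|\pi\|^2 = \sigma_{A'\overline{B'}}\pi^{A'}\overline{\pi^{B'}}$, and compute $\pa\|\pi\|^2$. Since $\sigma^{A'}{}_{\overline{B'}}$ is parallel and the $(1,0)$-forms with respect to $\mathbb{I}$ are spanned by $\d\pi^{A'}$ (on the fiber) together with the horizontal lifts $\pi_{A'}\theta^{AA'}$ of $\mathcal{A}_{[\sigma(\pi)]}$, I expect
\[
\pa\|\pi\|^2 = \overline{(\sigma\pi)_{A'}}\,\d\pi^{A'} + (\text{horizontal }(1,0)\text{-term}),
\]
where the horizontal term comes from differentiating $\pi^{A'}$ and $\overline{\pi^{B'}}$ along $\wt{TX}$, producing connection terms. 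The key point is that $d\|\pi\|^2$ has a purely vertical piece plus a horizontal piece, and the horizontal piece involves $\Gamma_{cA'}{}^{B'}$ which gets absorbed into $\d\pi^{A'}$; I would track carefully that $d\pi^{A'} = \d\pi^{A'} - \Gamma_{cB'}{}^{A'}\pi^{B'}\theta^c$, so $\|\pi\|^2$ has covariant derivative $\mathring{d}\|\pi\|^2 = \overline{(\sigma\pi)_{A'}}\d\pi^{A'} + (\sigma\pi)_{B'}\overline{\d\pi^{B'}}$ with no leftover horizontal contribution (because $\nabla_c\|\pi\|^2$ computed from a parallel $\sigma$ and the tautological section vanishes). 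Hence $\pa\|\pi\|^2 = \overline{(\sigma\pi)_{A'}}\,\d\pi^{A'}$, a purely vertical $(1,0)$-form — this is the crucial simplification.

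Then I would apply $\overline{\pa}$ to $\pa\|\pi\|^2 = \overline{(\sigma\pi)_{A'}}\,\d\pi^{A'}$. This requires $\overline{\pa}$ of the coefficient $\overline{(\sigma\pi)_{A'}} = \overline{\sigma_{A'\overline{B'}}}\,\overline{\pi^{B'}}$ — wait, more precisely $(\sigma\pi)_{A'} = \sigma_{A'\overline{B'}}\overline{\pi^{B'}}$ so $\overline{(\sigma\pi)_{A'}} = \overline{\sigma_{A'\overline{B'}}}\,\pi^{B'} = \sigma_{\overline{A'}B'}\pi^{B'}$ using \eqref{sigma-sym} — and the exterior derivative of $\d\pi^{A'}$, which by the computation in the proof of Theorem \ref{I-integrable}, specifically \eqref{d-delta}, equals (at a chosen point with vanishing connection coefficients) $\Lambda\e_{EF}\d_{E'}{}^{A'}\pi_{F'}\theta^e\wedge\theta^f$ modulo $\pi_{F'}\theta^{FF'}$ — more usefully, the full identity $d(\d\pi^{A'}) = \Lambda\e_{AB}\pi^{A'}{}_{\phantom{?}}\,\dots$; I will use the global form of \eqref{omega}, namely $d(\pi_{B'}\d\pi^{B'}) = \e_{A'B'}\d\pi^{A'}\wedge\d\pi^{B'} + \Lambda\e_{AB}\pi_{A'}\pi_{B'}\theta^a\wedge\theta^b$, adapted to get $d(\d\pi^{A'})$ itself. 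Collecting the vertical-vertical term $\sigma_{\overline{A'}B'}\,\overline{\d\pi^{B'}}\wedge\d\pi^{A'}$ and the horizontal-horizontal term coming from $d(\d\pi^{A'})$ paired with the coefficient, and then multiplying by $i$, I expect to land exactly on $i\sigma_{A'\overline{B'}}\d\pi^{A'}\wedge\overline{\d\pi^{B'}} - i\Lambda\e_{AB}\pi_{A'}(\sigma\pi)_{B'}\theta^a\wedge\theta^b = \wt\omega_{\mathbb{J}}$, where the sign and the appearance of $(\sigma\pi)_{B'}$ in the horizontal term come from contracting $\overline{(\sigma\pi)_{A'}} = \sigma_{\overline{A'}B'}\pi^{B'}$ with $\Lambda\e_{AB}\pi^{A'}$-type factors and using \eqref{invariance-e-tensor}.

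The main obstacle I anticipate is bookkeeping: making sure the horizontal contributions are handled consistently between the "point with trivialized connection" computation and the global statement, and correctly tracking the interplay of the $\sigma$-tensors with index-lowering by $\e_{A'B'}$ when moving between $(\sigma\pi)^{A'}$, $(\sigma\pi)_{A'}$, and their conjugates — the identities \eqref{invariance-e-tensor}, \eqref{sigma-sym}, and \eqref{Omega-star-pi} are exactly what is needed but must be invoked with the right index placements. In particular one must verify that no extra mixed horizontal-vertical $(1,1)$-terms survive; this follows because $d(\overline{(\sigma\pi)_{A'}})$ is proportional to $\overline{\d\pi^{B'}}$ (purely vertical $(0,1)$) plus horizontal terms that, wedged with $\d\pi^{A'}$, are absorbed, while the horizontal-vertical cross terms cancel by the same mechanism that made $\pa\|\pi\|^2$ purely vertical. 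Once the two pieces are assembled, the statement that $\wt\omega_{\mathbb{J}}$ is a real closed $(1,1)$-form is immediate: it is $i\pa\overline{\pa}$ of a real function, hence real and closed and of type $(1,1)$ with respect to $\mathbb{I}$.
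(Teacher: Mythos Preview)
Your strategy is essentially the paper's: both compute $d\|\pi\|^2$ at a point where the spin connection coefficients vanish (so $d\pi^{A'}=\d\pi^{A'}$ there), split into $(1,0)$ and $(0,1)$ parts, and then take a second exterior derivative using the curvature identity \eqref{d-delta} for $d(\d\pi^{A'})$. The paper packages this as $i\partial\overline\partial=dd^c$ and differentiates both terms of $d^c\|\pi\|^2$, so it also invokes \eqref{d-sigma-delta}; your plan to differentiate only $\partial\|\pi\|^2$ is an equivalent and in fact slightly shorter variant, since $\partial^2=0$ forces the result to be pure $(1,1)$ automatically.

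There is, however, a concrete bookkeeping slip that you anticipated but then fell into: your conjugation bars are swapped. From $\|\pi\|^2=\sigma_{A'\overline{B'}}\pi^{A'}\overline{\pi^{B'}}$ one obtains
\[
d\|\pi\|^2=(\sigma\pi)_{A'}\,\d\pi^{A'}+\overline{(\sigma\pi)_{B'}}\,\overline{\d\pi^{B'}},\qquad
\partial\|\pi\|^2=(\sigma\pi)_{A'}\,\d\pi^{A'},
\]
with $(\sigma\pi)_{A'}=\sigma_{A'\overline{B'}}\overline{\pi^{B'}}$ carrying an \emph{unbarred} lower index. Your formula $\partial\|\pi\|^2=\overline{(\sigma\pi)_{A'}}\,\d\pi^{A'}$ has a barred index contracted with an unbarred one and does not type-check; likewise the two terms in your $\mathring d\|\pi\|^2$ have their bars interchanged. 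Once this is corrected, the second step goes through cleanly: $d((\sigma\pi)_{A'})\wedge\d\pi^{A'}=\sigma_{A'\overline{B'}}\overline{\d\pi^{B'}}\wedge\d\pi^{A'}$ gives the vertical $(1,1)$ piece, and $(\sigma\pi)_{A'}\,d(\d\pi^{A'})=\Lambda\,\e_{EF}(\sigma\pi)_{(E'}\pi_{F')}\theta^e\wedge\theta^f$ gives the horizontal $(1,1)$ piece; multiplying $d(\partial\|\pi\|^2)=\overline\partial\partial\|\pi\|^2$ by $-i$ reproduces \eqref{omega-J} exactly, with no mixed horizontal--vertical terms surviving.
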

\begin{proof} 
We define $d^c:=(i/2)(\overline\partial-\partial)$ so that $i\partial\overline\partial=dd^c$. For a 1-form $\varphi$, we have $d^c\varphi=-(1/2)d\varphi\circ\mathbb{I}$. 

We fix a point $x\in X$ and compute with local frames for $\mathbb{S}, \mathbb{S}'$ as in the proof of Theorem \ref{I-integrable}. Then we have
\begin{equation}\label{d}
\begin{aligned}
d\|\pi\|^2
&=\e_{A'B'}(\sigma\pi)^{A'}\d\pi^{B'}+
\e_{A'B'}\pi^{B'}\sigma^{A'}{}_{\!\!\overline{C'}}\overline{\d\pi^{C'}} \\
&=(\sigma\pi)_{A'}\d\pi^{A'}+\sigma_{A'\overline{B'}}\pi^{A'}\overline{\d\pi^{B'}}
\end{aligned}
\end{equation}
and hence
\begin{equation}\label{d-c}
-2d^c\|\pi\|^2=i(\sigma\pi)_{A'}\d\pi^{A'}-i\sigma_{A'\overline{B'}}\pi^{A'}\overline{\d\pi^{B'}}.
\end{equation}
We differentiate both sides and use \eqref{d-delta}, \eqref{d-sigma-delta} to obtain
\begin{align*}
-2dd^c\|\pi\|^2&=
-2i\sigma_{A'\overline{B'}}\d\pi^{A'}\wedge \overline{\d\pi^{B'}} 
+ i(\sigma\pi)_{A'}\Lambda\e_{EF}\d_{(E'}{}^{A'}\pi_{F')}\theta^e\wedge\theta^f \\
&\quad -i\pi^{A'}\Lambda\e_{EF}\e_{E'A'}(\sigma\pi)_{F'}\theta^e\wedge\theta^f \\
&= -2i\sigma_{A'\overline{B'}}\d\pi^{A'}\wedge \overline{\d\pi^{B'}}+2i\Lambda\e_{EF}\pi_{E'}(\sigma\pi)_{F'} \theta^e\wedge\theta^f \\
&=-2\wt\omega_{\mathbb{J}}.
\end{align*}
Thus, we have \eqref{potential}.
\end{proof}
We write $\wt\omega_\mathbb{J}=i\wt g_{\a\overline\b}\tilde{\theta}^\a\wedge\overline{\tilde{\theta}^\b}$ with a $(1, 0)$-coframe $(\tilde{\theta}^\a)$ for $\mathbb{I}$, and define a hermitian form $\wt g$ on $\mathbb{S}'\setminus\{\bo\}$ by 
\[
\wt g:=2\wt g_{\a\overline\b}\tilde{\theta}^\a\cdot \overline{\tilde{\theta}^\b}.
\]
Then, since $\d\pi^{A'}, \pi_{A'}\theta^{AA'}$ are $(1, 0)$-forms and $\overline{\d\pi^{A'}}, (\sigma\pi)_{B'}\theta^{BB'}$ are $(0, 1)$-forms, we have
\[
\wt g=2\sigma_{A'\overline{B'}}\d\pi^{A'}\cdot\overline{\d\pi^{B'}}-2\Lambda\e_{AB}\pi_{A'}(\sigma\pi)_{B'} \theta^a\cdot\theta^b.
\]
Noting that $\mathbb{S}'$ is a rank-2 vector bundle, we can compute as
\begin{align*}
2\Lambda\e_{AB}\pi_{A'}(\sigma\pi)_{B'} \theta^a\cdot\theta^b 
&=2\Lambda\e_{AB}\pi_{[A'}(\sigma\pi)_{B']} \theta^a\cdot\theta^b \\
&=\Lambda\e_{AB}\e_{A'B'}\e^{E'F'}\pi_{[E'}(\sigma\pi)_{F']} \theta^a\cdot\theta^b \\
&=-\Lambda\|\pi\|^2 g_{ab}\theta^a\cdot\theta^b.
\end{align*}
Thus, we have
\begin{equation}\label{wt-g}
\wt g=2\sigma_{A'\overline{B'}}\d\pi^{A'}\cdot\overline{\d\pi^{B'}}+\Lambda\|\pi\|^2 g_{ab}\theta^a\cdot\theta^b.
\end{equation}
In particular, $\wt g$ is a positive definite K\"ahler metric when $\Lambda>0$ and a neutral K\"ahler metric when $\Lambda<0$.

\begin{theorem}\label{J-parallel}
Let $\wt\nabla$ be the Levi-Civita connection of $\wt g$. Then we have
\[
\wt\nabla \mathbb{J}=0, \quad \wt\nabla \wt\omega=0.
\]
In particular, $\wt g$ is a hyperk\"ahler metric with respect to the complex structures $\mathbb{I}, \mathbb{J}, \mathbb{K}:=\mathbb{I}\mathbb{J}$.
\end{theorem}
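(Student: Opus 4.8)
The plan is to establish the two parallelism statements $\wt\nabla\mathbb{J}=0$ and $\wt\nabla\wt\omega=0$ directly, since once these are known the hyperk\"ahler conclusion is formal: $\mathbb{J}$ is a $\wt g$-orthogonal complex structure with $\wt\nabla\mathbb{J}=0$, so its K\"ahler form $\wt\omega_\mathbb{J}$ (shown in Theorem \ref{thm-potential} to be closed) is parallel; meanwhile $\wt\omega$ being a parallel $(2,0)$-form with respect to $\mathbb{I}$ together with $\wt\nabla\mathbb{I}=0$ (which follows because $\wt g$ is K\"ahler for $\mathbb{I}$ by Theorem \ref{thm-potential}) gives $\wt\nabla\mathbb{K}=0$ for $\mathbb{K}=\mathbb{I}\mathbb{J}$, and the quaternionic relations $\mathbb{K}=\mathbb{I}\mathbb{J}=-\mathbb{J}\mathbb{I}$ were already built into the construction via \eqref{I-J} and the $\mathbb{C}$-anti-linearity of $\sigma$. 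So it suffices to verify that $\mathbb{J}$ (equivalently $\wt\omega_\mathbb{J}$) and $\wt\omega$ are parallel for $\wt\nabla$.

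The cleanest route is to compute the Levi-Civita connection $\wt\nabla$ of $\wt g$ explicitly in the adapted coframe $(\theta^a,\d\pi^{A'},\overline{\d\pi^{A'}})$ using the structure equations. From \eqref{wt-g} the metric has the block form $\wt g = 2\sigma_{A'\overline{B'}}\d\pi^{A'}\cdot\overline{\d\pi^{B'}} + \Lambda\|\pi\|^2 g_{ab}\theta^a\cdot\theta^b$, and the exterior derivatives of the coframe one-forms are controlled by \eqref{d-delta} and \eqref{d-sigma-delta} (valid at a point where the base connection vanishes), namely $d(\d\pi^{A'})=\Lambda\e_{EF}\d_{E'}{}^{A'}\pi_{F'}\theta^e\wedge\theta^f$ and $d\theta^a$ expressed via $d\gamma^{AA'}_b$ and the base connection, which vanish at $x$. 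First I would write down the connection one-forms $\wt\omega_\beta{}^\alpha$ in this frame by solving the first structure equation together with metric compatibility; the fiber directions and horizontal directions mix only through curvature terms proportional to $\Lambda$, with coefficients built from $\pi^{A'}$, $(\sigma\pi)^{A'}$, $\e_{AB}$, and $\sigma_{A'\overline{B'}}$. Then I would apply $\wt\nabla$ to the defining tensors: $\wt\omega = \e_{A'B'}\d\pi^{A'}\wedge\d\pi^{B'} + \Lambda\e_{AB}\pi_{A'}\pi_{B'}\theta^a\wedge\theta^b$ and $\wt\omega_\mathbb{J}$ from \eqref{omega-J}, and check that all the connection-coefficient contributions cancel. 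The cancellations are exactly the spinor identities already used in \S\ref{spinor-calculus}: \eqref{I-map}, \eqref{J-map}, \eqref{invariance-e-tensor}, and the fact that $\d(\sigma\pi)^{A'}$ and $\d\pi^{A'}$ are related by $\sigma$ together with parallelism of $\sigma$ and $\e'$.

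An alternative, perhaps shorter, approach is to avoid computing $\wt\nabla$ entirely: since $\wt g$ is K\"ahler for $\mathbb{I}$ (Theorem \ref{thm-potential} exhibits $\wt\omega_\mathbb{J}=i\partial\ol\partial\|\pi\|^2$ as a closed positive $(1,1)$-form, and one checks $\wt g$ is $\mathbb{I}$-hermitian by the block form), we have $\wt\nabla\mathbb{I}=0$ automatically. Then it is enough to show $\wt\nabla\wt\omega=0$, because $\mathbb{J}=\wt g^{-1}\wt\omega_\mathbb{J}$ and $\wt\omega_\mathbb{J}$, $\wt\omega$ are related through $\mathbb{I}$ and the Hodge/primitive decomposition: in fact $\wt\omega_\mathbb{J}$ is the real $(1,1)$-part attached to the parallel-candidate pair, and $\wt\omega$ holomorphic symplectic (established just before \eqref{omega-J}). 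For a holomorphic symplectic form on a K\"ahler manifold, parallelism is equivalent to the Ricci form vanishing in the appropriate sense; more concretely, $\wt\nabla\wt\omega=0$ follows once we know the holonomy preserves both $\mathbb{I}$ and the complex volume form $\wt\omega^{\wedge}$, which reduces to $\wt g$ being Ricci-flat. Either way, the main obstacle I anticipate is the bookkeeping in the mixed horizontal-vertical blocks of $\wt\nabla$: the curvature of the base enters through $d(\d\pi^{A'})$, and one must track carefully that the only surviving curvature is the $\Lambda$-term from \eqref{S-prime-curvature} (anti self-duality kills $\wt\Psi$, Einstein kills $\Phi$), so that the connection coefficients land precisely in the form dictated by $I_\pi$, $J_\pi$; organizing this so the parallelism of $\mathbb{J}$ drops out cleanly, rather than through a brute-force Christoffel computation, is the delicate part. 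I would present it in the adapted frame, reducing everything at a point to the already-proven identities \eqref{d-delta} and \eqref{d-sigma-delta}.
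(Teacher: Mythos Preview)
Your brute-force approach (computing $\wt\nabla$ in the adapted coframe) would work in principle, but it is considerably more laborious than the paper's argument, and your sketch does not make clear how the cancellations would actually organize themselves. Your alternative approach is closer in spirit to what the paper does, but it has a genuine gap: you propose to deduce $\wt\nabla\wt\omega=0$ from Ricci-flatness of $\wt g$, yet in the paper's logical order Ricci-flatness is a \emph{consequence} of the hyperk\"ahler property established in this very theorem, so invoking it here is circular. Moreover, the slogan ``parallelism of a holomorphic symplectic form $\Leftrightarrow$ Ricci-flat'' is correct for holomorphic volume forms, but $\wt\omega$ is a $(2,0)$-form on a complex $4$-fold, so the reduction to Ricci-flatness would only control $\wt\omega^2$, not $\wt\omega$ itself.

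The paper's proof avoids all of this with a short algebraic bootstrap. Working in a $(1,0)$-coframe for $\mathbb{I}$, the definition \eqref{omega-J} of $\wt\omega_{\mathbb{J}}$ yields the pointwise relation
\[
\wt g_{\a\overline\g}=-\wt\omega_{\a\b}\,\mathbb{J}_{\overline\g}{}^{\b}.
\]
Since $\wt g$ is K\"ahler for $\mathbb{I}$, the Levi-Civita connection is the Chern connection; hence the holomorphicity of $\wt\omega$ (already established) gives $\wt\nabla_{\overline\mu}\wt\omega_{\a\b}=0$ for free. Differentiating the displayed identity in the $\overline\mu$-direction then forces $\wt\nabla_{\overline\mu}\mathbb{J}_{\overline\g}{}^{\b}=0$. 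Contracting the same identity with $\mathbb{J}_\nu{}^{\overline\g}$ gives $\wt\omega_{\a\nu}=\wt g_{\a\overline\g}\mathbb{J}_{\nu}{}^{\overline\g}$, and applying $\wt\nabla_{\overline\mu}$ again yields $\wt\nabla_{\overline\mu}\mathbb{J}_{\nu}{}^{\overline\g}=0$, i.e.\ the remaining half of $\wt\nabla\mathbb{J}=0$. Then $\wt\nabla\wt\omega=0$ follows from the same relation. No curvature computation, no Ricci-flatness, and no explicit Christoffel symbols are needed---the entire argument rests on (i) K\"ahlerity for $\mathbb{I}$, (ii) holomorphicity of $\wt\omega$, and (iii) the algebraic link between $\wt g$, $\wt\omega$, and $\mathbb{J}$.
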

\begin{proof}
We take a local $(1, 0)$-coframe $(\tilde{\theta}^\a)$ on $\mathbb{S}'\setminus\{\bo\}$ with respect to $\mathbb{I}$, and write
\[
\wt\omega_{\mathbb{J}}=i\wt g_{\a\overline\b}\tilde{\theta}^\a\wedge\overline{\tilde{\theta}^\b}, \quad \wt\omega=\wt\omega_{\a\b}\tilde{\theta}^\a\wedge\tilde{\theta}^\b.
\]
where $\wt\omega_{\a\b}=-\wt\omega_{\b\a}$.  Since $\mathbb{J}$ anti-commutes with $\mathbb{I}$, it has components $\mathbb{J}_{\overline\a}{}^\b, \mathbb{J}_{\a}{}^{\overline\b}(=\overline{\mathbb{J}_{\overline\a}{}^\b})$. Then, by \eqref{2-form-J} we have
\[
\wt\omega_{\mathbb{J}}=-i\wt\omega_{\a\b}\tilde{\theta}^\a\wedge(\tilde{\theta}^\b\circ\mathbb{J})
=-i\wt\omega_{\a\b}\mathbb{J}_{\overline\g}{}^\b\tilde{\theta}^\a\wedge\overline{\tilde{\theta}^\g},
\]
which implies
\begin{equation}\label{g-omega}
\wt g_{\a\overline\g}=-\wt\omega_{\a\b}\mathbb{J}_{\overline\g}{}^\b.
\end{equation}
In particular, the skew form $\wt\omega_{\a\b}$ is non-degenerate.

Since $\wt\omega$ is holomorphic, we have $\wt\nabla_{\overline\mu}\wt\omega_{\a\b}=0$ and hence
\[
0=\wt\nabla_{\overline\mu}\wt g_{\a\overline\g}=-\wt\omega_{\a\b}\wt\nabla_{\overline\mu}\mathbb{J}_{\overline\g}{}^\b,
\]
which yields $\wt\nabla_{\overline\mu}\mathbb{J}_{\overline\g}{}^\b=0$.  Contracting both sides of \eqref{g-omega} with $\mathbb{J}_\nu{}^{\overline \g}$ gives
\[
\wt g_{\a\overline\g}\mathbb{J}_\nu{}^{\overline \g}=\wt\omega_{\a\nu}.
\]
Applying $\wt\nabla_{\overline\mu}$, we obtain $\wt\nabla_{\overline\mu}\mathbb{J}_\nu{}^{\overline \g}=0$, or equivalently $\wt\nabla_{\mu}\mathbb{J}_{\overline\nu}{}^{\g}=0$. Thus we obtain 
$\wt\nabla\mathbb{J}=0$, and by \eqref{g-omega} we also have $\wt\nabla\wt\omega=0$.
\end{proof}
Since $\wt\omega$ is a holomorphic symplectic form on $\mathbb{S}'\setminus\{\bo\}$, the 1-form 
$\wt\tau$ defines a (weighted) holomorphic contact form on $\mathbb{P}(\mathbb{S}')$, and via the 3-form $\wt\tau\wedge d\wt\tau$, we can identify $\mathbb{S}'\setminus\{\bo\}$ with a fractional power of the canonical bundle of $\mathbb{P}(\mathbb{S}')$ with the zero section removed:

\begin{prop}\label{canonical-bundle}
The $3$-form $\wt\tau\wedge d\wt\tau$ descends to a section of the 
$\mathbb{C}^*$-bundle
\[
(\mathbb{S}'\setminus\{\bo\})^{\otimes(-4)}\otimes (K_{\mathbb{P}(\mathbb{S}')}\setminus\{\bo\})
\]
over $\mathbb{P}(\mathbb{S}')$ and defines an isomorphism 
\[
\mathbb{S}'\setminus\{\bo\}\cong K_{\mathbb{P}(\mathbb{S}')}^{1/4}\setminus\{\bo\}.
\]
\end{prop}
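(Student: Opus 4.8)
The plan is to regard $\mathbb{S}'\setminus\{\bo\}$ as the complement of the zero section in the tautological line bundle $\calO_{\mathbb{P}(\mathbb{S}')}(-1)$ over $\mathbb{P}(\mathbb{S}')$, to show that $\wt\tau\wedge d\wt\tau$ is \emph{basic} for the fibrewise $\mathbb{C}^*$-action in the twisted sense of being horizontal and homogeneous of weight $4$, and then to show it vanishes nowhere. First I record how the relevant forms behave under the fibrewise dilation $\d_t\colon\mathbb{S}'\to\mathbb{S}'$ ($t\in\mathbb{C}^*$). Since $\d_t^*\pi^{A'}=t\,\pi^{A'}$ while the connection coefficients and coframe are pulled back from $X$, one gets $\d_t^*\d\pi^{A'}=t\,\d\pi^{A'}$, hence $\d_t^*\wt\tau=t^2\wt\tau$, hence $\d_t^*\wt\omega=t^2\wt\omega$, and finally $\d_t^*(\wt\tau\wedge d\wt\tau)=t^4\,\wt\tau\wedge d\wt\tau$. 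Writing $E$ for the holomorphic Euler vector field of this action, the identities $\iota_E\d\pi^{A'}=\pi^{A'}$ and $\iota_E\theta^a=0$ give $\iota_E\wt\tau=\pi_{B'}\pi^{B'}=0$ and, from \eqref{omega}, $\iota_E\wt\omega=2\wt\tau$; therefore $\iota_E(\wt\tau\wedge d\wt\tau)=-\wt\tau\wedge 2\wt\tau=0$. As $\wt\tau\wedge d\wt\tau$ is of type $(3,0)$ relative to $\mathbb{I}$ it also kills $\overline{E}$, so it is horizontal for the full $\mathbb{C}^*$-action.

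By the standard correspondence between $\mathbb{C}^*$-homogeneous horizontal forms and twisted forms on the base — a horizontal form on $\mathbb{S}'\setminus\{\bo\}$ that is homogeneous of weight $m$ for the fibrewise action descends to a section of $(\mathbb{S}'\setminus\{\bo\})^{\otimes(-m)}$ tensored with the corresponding bundle of forms on $\mathbb{P}(\mathbb{S}')$; concretely, horizontality and homogeneity force $(f\sigma)^*(\wt\tau\wedge d\wt\tau)=f^{4}\,\sigma^*(\wt\tau\wedge d\wt\tau)$ for any local nowhere-zero section $\sigma\colon U\to\mathbb{S}'\setminus\{\bo\}$ and any $f\colon U\to\mathbb{C}^*$, which is exactly the transformation law of a section of the bundle below — the $(3,0)$-form $\wt\tau\wedge d\wt\tau$, having weight $4$, descends to a section of
\[
(\mathbb{S}'\setminus\{\bo\})^{\otimes(-4)}\otimes\Lambda^3 T^*\mathbb{P}(\mathbb{S}')=(\mathbb{S}'\setminus\{\bo\})^{\otimes(-4)}\otimes K_{\mathbb{P}(\mathbb{S}')},
\]
where one uses that $\mathbb{P}(\mathbb{S}')$ is a complex $3$-fold relative to $\mathbb{I}$ and that the projection $\mathbb{S}'\setminus\{\bo\}\to\mathbb{P}(\mathbb{S}')$ is holomorphic.

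It remains to see that this section is nowhere zero, so that it takes values in the $\mathbb{C}^*$-bundle obtained by deleting the zero section, i.e. defines an isomorphism of $\mathbb{C}^*$-bundles $(\mathbb{S}'\setminus\{\bo\})^{\otimes 4}\xrightarrow{\ \sim\ }K_{\mathbb{P}(\mathbb{S}')}\setminus\{\bo\}$. This is immediate from \eqref{omega}: since $\wt\tau$ is a linear combination of $\d\pi^{0'}$ and $\d\pi^{1'}$, the term $\e_{A'B'}\d\pi^{A'}\wedge\d\pi^{B'}$ of $\wt\omega$ dies when wedged with $\wt\tau$, so
\[
\wt\tau\wedge d\wt\tau=\Lambda\,\wt\tau\wedge\bigl(\e_{AB}\,\pi_{A'}\pi_{B'}\,\theta^a\wedge\theta^b\bigr),
\]
and in the $(1,0)$-coframe $\{\d\pi^{A'},\,\pi_{A'}\theta^{AA'}\}$ this is a nonzero multiple — by $\Lambda\neq0$ and $\pi\neq0$ — of a wedge of three distinct coframe elements, hence nonzero at every point. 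Finally a nowhere-zero section of $(\mathbb{S}'\setminus\{\bo\})^{\otimes(-4)}\otimes(K_{\mathbb{P}(\mathbb{S}')}\setminus\{\bo\})$ trivializes that $\mathbb{C}^*$-bundle, hence exhibits $\mathbb{S}'\setminus\{\bo\}$ as a fourth root of $K_{\mathbb{P}(\mathbb{S}')}\setminus\{\bo\}$, which is the claimed isomorphism $\mathbb{S}'\setminus\{\bo\}\cong K_{\mathbb{P}(\mathbb{S}')}^{\otimes\frac14}\setminus\{\bo\}$.

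The only real difficulty here is bookkeeping: getting the weight and the direction of the twist right (so that one lands on the $(-4)$-th and not the $(+4)$-th power), and making sure the ``basic forms descend'' principle is applied in the holomorphic category with respect to $\mathbb{I}$, using the holomorphicity of $\mathbb{S}'\setminus\{\bo\}\to\mathbb{P}(\mathbb{S}')$. All the genuine geometric content — that $\wt\tau\wedge d\wt\tau$ is a nowhere-zero holomorphic $3$-form — is already packaged in \eqref{omega} together with the standing assumption $\Lambda\neq0$.
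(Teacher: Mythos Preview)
Your proof is correct and follows essentially the same outline as the paper's: show $\iota_E\wt\tau=0$, deduce horizontality and the right homogeneity for $\wt\tau\wedge d\wt\tau$, then establish non-vanishing. The only real difference is in the non-vanishing step. The paper argues via the symplectic identity
\[
E\lrcorner\,\wt\omega^2=2(E\lrcorner\,d\wt\tau)\wedge d\wt\tau=2(\mathcal{L}_E\wt\tau)\wedge d\wt\tau=4\,\wt\tau\wedge d\wt\tau,
\]
and then invokes the non-degeneracy of $\wt\omega$ (obtained from the relation $\wt g_{\a\overline\gamma}=-\wt\omega_{\a\b}\mathbb{J}_{\overline\gamma}{}^\b$ in the proof of Theorem~\ref{J-parallel}) to conclude that the left-hand side, and hence $\wt\tau\wedge d\wt\tau$, is nowhere zero. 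You instead compute $\wt\tau\wedge d\wt\tau$ directly from \eqref{omega}, observing that the $\d\pi^{A'}\wedge\d\pi^{B'}$ piece dies against $\wt\tau$ and the surviving horizontal piece is nonzero because $\Lambda\neq0$. Your route is more self-contained (it does not appeal to the hyperk\"ahler metric), while the paper's is a cleaner one-line manipulation once $\wt\omega$ is known to be symplectic. One small wording point: your phrase ``a wedge of three distinct coframe elements'' is slightly loose, since $\wt\tau$ is a nontrivial linear combination of $\d\pi^{0'}$ and $\d\pi^{1'}$ rather than a single coframe element; the conclusion is of course still correct because that combination is linearly independent of $\pi_{A'}\theta^{0A'}$ and $\pi_{A'}\theta^{1A'}$.
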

\begin{proof}
Let $E:=\pi^{A'}(\partial/\partial \pi^{A'})$ be the $(1, 0)$-part of the Euler field on $\mathbb{S}'\setminus\{\bo\}$. Then, since
\[
\wt\tau(E)=\e_{A'B'}\pi^{A'}\pi^{B'}=0,
\]
$\wt\tau$ and $\wt\tau\wedge d\wt\tau$ are horizontal forms with respect to the fibration $\mathbb{S}'\setminus\{\bo\}\to \mathbb{P}(\mathbb{S}')$ which are homogeneous of degree $(2, 0)$ and $(4, 0)$ respectively. Moreover, as is shown in the proof of Theorem \ref{J-parallel}, $\wt\omega$ is a non-degenerate 2-form and hence  
\[
0\neq E\lrcorner\, \wt\omega^2=2(E\lrcorner\, d\wt\tau)\wedge d\wt\tau
=2(\mathcal{L}_E \wt\tau)\wedge d\wt\tau =4\wt\tau\wedge d\wt\tau,
\]
where $\mathcal{L}_E$ denotes the Lie derivative. Thus, $\wt\tau\wedge d\wt\tau$ defines a section of $(\mathbb{S}'\setminus\{\bo\})^{\otimes (-4)} \otimes (K_{\mathbb{P}(\mathbb{S}')}\setminus\{\bo\})$.
\end{proof}

\section{The K\"ahler-Einstein metric on $\mathbb{P}(\mathbb{S}')$}\label{MA-Kahler-Einstein}

The construction of hyperk\"ahler metrics on $\mathbb{S}'\setminus\{\bo\}$ in \S\ref{hyperkahler-spinor} also gives rise to a K\"ahler-Einstein metric on the twistor space $\mathbb{P}(\mathbb{S}')$. To see this, we first recall a general relationship between the Ricci-flat and the Einstein equations via the complex Monge--Amp\`ere equation.

\subsection{The Ricci-flat equation and the complex Monge--Amp\`ere equation}\label{Ricci-flat}
Let $Y$ be an $m$-dimensional complex manifold, and $\mathcal{L}\to Y$ a holomorphic line bundle.
We consider a (pseudo-)K\"ahler metric $\wt g$ on $\mathcal{L}\setminus\{\bo\}$ with the K\"ahler form
\[
\omega_{\wt g}=i\partial\overline\partial\varphi,
\]
where $\varphi$ is a real function on $\mathcal{L}\setminus\{\bo\}$. We assume that $\varphi$ is homegeneous of degree $(1, 1)$, i.e., $\varphi=|z^0|^2\underline\varphi$ locally for a fiber coordinate $z^0$ and a real function $\underline{\varphi}$ on $Y$. Then, $\wt g$ is Ricci-flat if and only if the complex Monge--Amp\`ere equation
\[
\partial\overline\partial\log |\omega_{\wt g}^{m+1}|\bigl(=\partial\overline\partial\log|\det\wt g\,|\,\bigr)=0
\]
is satisfied. Let us assume that $\varphi$ is non-vanishing and consider 
\[
\omega_{\rm KE}\!:=i\partial\overline\partial\log|\varphi|,
\]
which descends to a $(1, 1)$-form on $Y$. Then we have 
\begin{equation}\label{MA-descend}
\omega_{\wt g}^{m+1}=(m+1)\varphi^{\,m+1}\Bigl(i\frac{dz^0}{z^0}\wedge\frac{d\overline{z^0}}{\overline{z^0}}\Bigr)\wedge \omega_{\rm KE}^m,
\end{equation}
which implies that $\omega_{\rm KE}$ is non-degenerate and defines a (pseudo-)K\"ahler metric $g_{\rm KE}$ on $Y$. Moreover, taking $i\partial\overline\partial\log|\cdot|$ of both sides of \eqref{MA-descend} gives
\[
i\partial\overline\partial\log |\omega_{\wt g}^{m+1}|=i\partial\overline\partial\log |\omega_{\rm KE}^{m}|+(m+1)\omega_{\rm KE}
\]
and hence
\[
{\rm Ric}(\wt g)={\rm Ric} (g_{\rm KE})+(m+1)g_{\rm KE}.
\]
Thus, if $\wt g$ is Ricci-flat, $g_{\rm KE}$ becomes a K\"ahler-Einstein metric on $Y$.

\subsection{The structure of the K\"ahler-Einstein metric on $\mathbb{P}(\mathbb{S}')$}
Now we apply the construction to the hyperk\"ahler metric $\wt g$ on the $\mathbb{C}^*$-bundle 
$\mathbb{S}'\setminus\{\bo\}\to \mathbb{P}(\mathbb{S}')$. Since any hyperk\"ahler metric is Ricci-flat, the K\"ahler potential $\|\pi\|^2$ solves the complex Monge--Amp\`ere equation and hence
\[
\omega_{\rm KE}:=i\partial\overline{\partial}\log\|\pi\|^2
\]
descends to the K\"ahler form of a K\"ahler-Einstein metric $g_{\rm KE}$ on $\mathbb{P}(\mathbb{S}')$. The following theorem describes the structure of this metric:

\begin{theorem}\label{Kahler-Einstein}
Let $H\subset T\mathbb{P}(\mathbb{S}')$ be the distribution obtained by the horizontal lift of $TX$ with respect to $\nabla$. Then, in terms of the decomposition
\[
T\mathbb{P}(\mathbb{S}')=H\oplus T({\rm fiber}),
\]
the K\"ahler-Einstein metric $g_{\rm KE}$ is given by
\[
g_{\rm KE}=(\Lambda g)\oplus g_{\rm FS},
\]
where $g_{\rm FS}$ denotes the Fubini--Study metric on each fiber $\mathbb{P}(\mathbb{S}'_x)\cong \mathbb{P}^1$. In particular, $g_{\rm KE}$ is positive definite when $\Lambda>0$, and Lorentzian when $\Lambda<0$.
\end{theorem}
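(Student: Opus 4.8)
The plan is to obtain $\omega_{\rm KE}$ in closed form from the data already assembled in \S\ref{hyperkahler-spinor} and then to read off its two blocks by restriction. Using the elementary identity
\[
i\partial\overline\partial\log\varphi=\frac{1}{\varphi}\,i\partial\overline\partial\varphi-\frac{1}{\varphi^{2}}\,i\partial\varphi\wedge\overline\partial\varphi
\]
with $\varphi=\|\pi\|^{2}$, I would substitute $i\partial\overline\partial\|\pi\|^{2}=\wt\omega_{\mathbb J}$ from \eqref{potential}, the explicit expression \eqref{omega-J} for $\wt\omega_{\mathbb J}$, and \eqref{d} for $d\|\pi\|^{2}$. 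The point to exploit about \eqref{d} is that $d\|\pi\|^{2}$ has no $\theta^{a}$-component, so that $\partial\|\pi\|^{2}=(\sigma\pi)_{A'}\d\pi^{A'}$ and $\overline\partial\|\pi\|^{2}$ is its complex conjugate; consequently
\[
\omega_{\rm KE}=\frac{1}{\|\pi\|^{2}}\Bigl(i\sigma_{A'\overline{B'}}\d\pi^{A'}\wedge\overline{\d\pi^{B'}}-i\Lambda\e_{AB}\pi_{A'}(\sigma\pi)_{B'}\,\theta^{a}\wedge\theta^{b}\Bigr)-\frac{i(\sigma\pi)_{A'}\overline{(\sigma\pi)_{B'}}}{\|\pi\|^{4}}\,\d\pi^{A'}\wedge\overline{\d\pi^{B'}}.
\]

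First I would observe that, written in the coframe $(\theta^{a},\d\pi^{A'},\overline{\d\pi^{A'}})$ adapted to $\mathbb C T(\mathbb S'\setminus\{\bo\})=\mathbb C\wt{TX}\oplus\mathbb C T(\text{fiber})$, this $2$-form contains no mixed $\theta^{a}\wedge\overline{\d\pi^{B'}}$ terms; hence $g_{\rm KE}$ is block diagonal with respect to $H\oplus T(\text{fiber})$, where $H$ is the $\nabla$-horizontal lift of $TX$ (precisely the directions annihilated by the $\d\pi^{A'}$). It then remains to identify the two blocks. For the horizontal block the only surviving term is $-(i\Lambda/\|\pi\|^{2})\e_{AB}\pi_{A'}(\sigma\pi)_{B'}\theta^{a}\wedge\theta^{b}$, i.e. $\|\pi\|^{-2}$ times the horizontal part of $\wt\omega_{\mathbb J}$; by the reduction already performed in \S\ref{hyper} (the identity $-2\Lambda\e_{AB}\pi_{A'}(\sigma\pi)_{B'}\theta^{a}\cdot\theta^{b}=\Lambda\|\pi\|^{2}g_{ab}\theta^{a}\cdot\theta^{b}$ used to pass to \eqref{wt-g}), the horizontal part of $\wt\omega_{\mathbb J}$ corresponds to the metric $\Lambda\|\pi\|^{2}g$ on $\wt{TX}$, so dividing by $\|\pi\|^{2}$ the horizontal block of $g_{\rm KE}$ is $\Lambda g$. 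For the vertical block I would restrict to a fixed fiber $\mathbb S'_{x}\setminus\{0\}$, where $\theta^{a}=0$ and $\d\pi^{A'}$ reduces to $d\pi^{A'}$; the displayed formula then becomes $i\partial\overline\partial\log\|\pi\|^{2}$ computed on $\mathbb S'_{x}\setminus\{0\}$, which is precisely the pullback of the Fubini--Study K\"ahler form on $\mathbb P(\mathbb S'_{x})\cong\mathbb P^{1}$ determined by the hermitian form $\sigma_{A'\overline{B'}}$. Hence the vertical block is $g_{\rm FS}$, giving $g_{\rm KE}=(\Lambda g)\oplus g_{\rm FS}$, and the signature assertions follow at once: for $\Lambda>0$ both blocks are positive definite, while for $\Lambda<0$ the horizontal block is negative definite and the $2$-dimensional vertical block stays positive definite.

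The computation is routine once \eqref{potential}, \eqref{omega-J} and \eqref{d} are in hand, so I do not expect a genuine obstacle; the main thing to get right is the bookkeeping — confirming that $d\|\pi\|^{2}$ really has no $\theta^{a}$-component (which is what simultaneously rules out the cross terms and any spurious horizontal contribution coming from $i\partial\|\pi\|^{2}\wedge\overline\partial\|\pi\|^{2}$ after differentiating), and carefully tracking the factors of $2$ and $i$ in the K\"ahler-form/metric dictionary so that the normalization of $g_{\rm FS}$ matches the one fixed in the Notations. That $\omega_{\rm KE}$ is the K\"ahler form of an honest Einstein metric is already guaranteed by \S\ref{Ricci-flat} together with the hyperk\"ahler, hence Ricci-flat, property of $\wt g$, so nothing further is needed on that point.
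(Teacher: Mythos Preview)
Your proposal is correct and follows essentially the same route as the paper: both compute $\omega_{\rm KE}$ explicitly from $\wt\omega_{\mathbb J}$ and $d\|\pi\|^{2}$ via the identity $i\partial\overline\partial\log\varphi=\varphi^{-1}i\partial\overline\partial\varphi-\varphi^{-2}i\partial\varphi\wedge\overline\partial\varphi$ (the paper writes this as $dd^{c}\log\|\pi\|^{2}=\|\pi\|^{-2}\wt\omega_{\mathbb J}+\|\pi\|^{-4}d\|\pi\|^{2}\wedge d^{c}\|\pi\|^{2}$), then read off block-diagonality, identify the horizontal block with $\Lambda g$ by the same antisymmetrization $\pi_{[A'}(\sigma\pi)_{B']}=-\tfrac12\|\pi\|^{2}\e_{A'B'}$, and recognize the fiber restriction as the Fubini--Study form. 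The only cosmetic differences are that the paper argues the vertical block via ``fibers are complex submanifolds, so $\partial\overline\partial$ commutes with restriction'' rather than by setting $\theta^{a}=0$ in the displayed formula, and that it redoes the horizontal simplification in place rather than citing \S\ref{hyper}.
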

\begin{proof}
We take a local frame for $\mathbb{S}'$ such that
\begin{equation}\label{potential-pi}
\|\pi\|^2=|\pi^{0'}|^2+|\pi^{1'}|^2.
\end{equation}
Since fibers of $\mathbb{S}'\setminus\{\bo\}$ are complex submanifolds (with respect to $\mathbb{I}$), the operator $\partial\overline\partial$ commutes with the restriction of the K\"ahler potential to the fibers. Hence the restriction of $\omega_{\rm KE}$ to each fiber agrees with the K\"ahler form of the Fubini--Study metric. (Note that the fiber coordinates $(\pi^{0'}, \pi^{1'})$ are not holomorphic on the total space $\mathbb{S}'\setminus\{\bo\}$, so the expression \eqref{potential-pi} is not useful for the computation except along the fibers.)

By using \eqref{omega-J}, \eqref{d} and \eqref{d-c}, we can compute $\omega_{\rm KE}$ as follows:
\begin{align*}
dd^c\log\|\pi\|^2
&=\frac{\wt\omega_{\mathbb{J}}}{\|\pi\|^2}+\frac{d\|\pi\|^2\wedge d^c\|\pi\|^2}{\|\pi\|^4} \\
&=i\|\pi\|^{-2}\bigl(\sigma_{A'\overline{B'}}\d\pi^{A'}\wedge \overline{\d\pi^{B'}}-\Lambda\e_{AB}\pi_{A'}(\sigma\pi)_{B'} \theta^a\wedge\theta^b\bigr) \\
&\quad +i\|\pi\|^{-4}(\sigma\pi)_{A'}\d\pi^{A'}\wedge\overline{(\sigma\pi)_{B'}\d\pi^{B'}}.
\end{align*}
From this, we see that $\omega_{\rm KE}(V, W)=0$ for any $V\in H$ and $W\in T({\rm fiber})$. Since $H$ and $T({\rm fiber})$ are complex subbundles of $T\mathbb{P}(\mathbb{S}')$, this implies that these are orthogonal to each other with respect to $g_{\rm KE}$. 

The restriction of $\omega_{\rm KE}$ to $H$ is given by
\[
\omega_{\rm KE}|_H=
-i\|\pi\|^{-2}\Lambda\e_{AB}\pi_{A'}(\sigma\pi)_{B'} \theta^a\wedge\theta^b.
\]
Since $\pi_{A'}\theta^a$ is a $(1, 0)$-form and 
$(\sigma\pi)_{B'} \theta^b$ is a $(0, 1)$-form with respect to $\mathbb{I}$, we have
\begin{align*}
g_{\rm KE}|_H=-2\|\pi\|^{-2}\Lambda\e_{AB}\pi_{A'}(\sigma\pi)_{B'}
\theta^a\cdot\theta^b=-2\|\pi\|^{-2}\Lambda\e_{AB}\pi_{[A'}(\sigma\pi)_{B']}
\theta^a\cdot\theta^b.
\end{align*}
Noting that ${\rm rank}\,\mathbb{S'}=2$, we have
\[
\pi_{[A'}(\sigma\pi)_{B']}=\frac{1}{2}\e^{P'Q'}\pi_{P'}(\sigma\pi)_{Q'}\e_{A'B'}=
-\frac{1}{2}\|\pi\|^2\e_{A'B'}.
\]
Thus we obtain
\[
g_{\rm KE}|_H=\Lambda \e_{AB}\e_{A'B'}\theta^a\cdot\theta^b=\Lambda g
\]
and complete the proof.
\end{proof}


\section{Ambient metrics in conformal and CR geometries}\label{ambient-conf-CR}

\subsection{The Fefferman--Graham ambient metric}\label{ambient}

Let us review the notion of ambient metric in conformal geometry, introduced by Fefferman--Graham \cite{FG1}; we refer the reader to the book \cite{FG2} for the  detail of the theory.

Let $(M, [g])$ be an $n$-dimensional $C^\infty$ conformal manifold of signature $(p, q)$.
The conformal structure defines an $\mathbb{R}_+$-subbundle $\mathcal{G}\subset S^2(T^*M)$ with the tautological 2-tensor $\boldsymbol{g}_0$. We consider the $(n+2)$-dimensional manifold
\[
\widetilde{\mathcal{G}}:=\mathcal{G}\times (-\varepsilon, \varepsilon)_\rho,
\]
called the {\it ambient space}, and identify $\mathcal{G}$ with $\mathcal{G}\times\{0\}$.
Let $\d_s\colon \widetilde{\mathcal{G}}\to \widetilde{\mathcal{G}}\ (s\in\mathbb{R}_+)$ be the dilation of the $\mathcal{G}$-component. Fefferman--Graham \cite{FG1, FG2} constructed a pseudo-Riemannian metric $\wt g$ of signature $(p+1, q+1)$ on $\widetilde{\mathcal{G}}$, called the {\it ambient metric}, which satisfies the homogeneity condition $\d_s^*\wt g=s^2\wt g$, the boundary condition $\wt g|_{T\mathcal{G}}=\boldsymbol{g}_0$, and the asymptotic Ricci-flat equation
\[
{\rm Ric}(\widetilde g)=\begin{cases}
O(\rho^\infty) & (n: {\rm odd}) \\
O^+(\rho^{n/2-1}) & (n:  {\rm even}).
\end{cases}
\]
Here, for a 2-tensor $S$ on $\widetilde{\mathcal{G}}$, the equation $S=O^+(\rho^m)$ means that $S=O(\rho^m)$ and $(\rho^{-m}S)|_{T\mathcal{G}}$ is a horizontal 2-tensor with vanishing trace with respect to $\boldsymbol{g}_0$. The ambient metric is unique in the sense that if $\wt g{\,'}$ satisfies the same conditions, there exists a dilation-equivariant differeomphism 
$\phi\colon \widetilde{\mathcal{G}}\to \widetilde{\mathcal{G}}$ which satisfies $\phi|_{\mathcal{G}}={\rm id}$  and 
\[
\phi^*{\widetilde g}{\,'}-\widetilde g =\begin{cases}
O(\rho^\infty) &(n: {\rm odd}) \\
O^+(\rho^{n/2}) & (n: {\rm even}) .
\end{cases}
\]
When $n$ is even, the equation ${\rm Ric}(\wt g)=O^+(\rho^{n/2-1})$ is optimal in general since 
the 2-tensor $\mathcal{O}=(\rho^{1-n/2}{\rm Ric}(\wt g))|_{T\mathcal{G}}$, called the {\it obstruction tensor}, is independent of the choice of $\wt g$, and is non-zero for general even dimensional conformal manifolds. 

The flat model of conformal manifold of signature $(p, q)$ is the hyperquadric
\[
\bigl\{[\xi]\in \mathbb{RP}^{n+1}| -(\xi^0)^2-(\xi^1)^2-\cdots-(\xi^q)^2+(\xi^{q+1})^2+\cdots+(\xi^{n+1})^2=0\bigr\}.
\]
In this case, the ambient space can be identified with a neighborhood of the cone in $\mathbb{R}^{n+2}$, and the ambient metric is given by the flat metric
\[
\wt g= -(d\xi^0)^2-(d\xi^1)^2-\cdots-(d\xi^q)^2+(d\xi^{q+1})^2+\cdots+(d\xi^{n+1})^2.
\]

\subsection{The Poincar\'e-Einstein metric}

There is another class of natural metrics associated to conformal manifolds.
Let $M$ be a hypersurface in an $(n+1)$-dimensional manifold $X$ and $r$ a defining function of $M$, i.e., $M=\{r=0\}$ and $dr|_M\neq0$. 
A (pseudo-)Riemannian metric $g_+$ on $X\setminus M$ is said to be {\it conformally compact} if $\overline{g}_+:=r^2g_+$ can be extended to a metric on $X$. If this holds, the conformal class 
$[\overline{g}_+|_{TM}]$ is independent of the choice of $r$, and is called the two-sided) {\it conformal infinity} of $g_+$. If, further, $g_+$ satisfies the Einstein equation,
it is called a {\it Poincar\'e-Einstein metric}. The flat model of the Poincar\'e-Einstein metric is the hyperbolic metric $4|dx|^2/(1-|x|^2)^2$, whose conformal infinity is the standard conformal sphere $S^n$. The construction of the Poincar\'e-Einstein metric with a given conformal infinity is a major problem in conformal geometry, and there are some known existence results (e.g., \cite{GL, GS, HP, LeB1, Mat}). Fefferman--Graham \cite{FG2} also shows how to translate the construction of the ambient metric to that of the Poincar\'e-Einstein metric.

In our construction of hyperk\"ahler ambient metrics, an important role is played by the following existence theorem in the real analytic case, which was proved by LeBrun \cite{LeB1} by a twistorial method:

\begin{theorem}[\cite{LeB1}]\label{LeBrun}
Any $3$-dimensional real analytic (positive definite) conformal manifold $(\Sigma, [h])$ is the conformal infinity of a $4$-dimensional Poincar\'e-Einstein manifold $(X, g_+)$ which satisfies the anti-self dual equation $W^+=0$.
\end{theorem}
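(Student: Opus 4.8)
\textbf{Proof strategy for Theorem \ref{LeBrun}.} The plan is to recover $(X,g_+)$ from its twistor space by the nonlinear graviton construction, building that twistor space directly out of $(\Sigma,[h])$. Recall that, in the version of the Penrose--Atiyah--Hitchin--Singer correspondence that incorporates the Einstein condition via a holomorphic contact structure (LeBrun, ``$\mathcal{H}$-space with a cosmological constant''), an anti self-dual Einstein conformal $4$-manifold with $\Lambda\neq0$ is equivalent to a complex $3$-fold $Z$ carrying (i) a $4$-complex-parameter complete family of rational curves with normal bundle $\calO(1)\oplus\calO(1)$, (ii) a holomorphic contact structure transverse to the curves, and (iii) an antiholomorphic involution acting as a real structure on the family; then $X$ is (the real locus of) the parameter space, the conformal class is the one making the curves null, the contact structure yields the Einstein equation, its cohomology class (a fractional power of the anticanonical bundle, compare Proposition \ref{canonical-bundle}) forces $\Lambda\neq0$, and the real structure fixes the signature. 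So it suffices to manufacture such a $Z$ from $(\Sigma,[h])$.

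First I would form the twistor CR $5$-manifold $M=\mathbb{P}(\mathbb{S}')|_\Sigma$ over $(\Sigma,[h])$ in LeBrun's sense. Because $(\Sigma,[h])$ is real analytic, $M$ is a real-analytic CR manifold, and a real-analytic CR manifold is locally embeddable: complexifying the CR data and applying Newlander--Nirenberg realizes $M$ as a generic real hypersurface in a complex $3$-fold $Z$, splitting it locally into two sides. (This is exactly where real analyticity is indispensable; it is the converse of LeBrun's non-embeddability theorem \cite{LeB2}.) The $\mathbb{P}^1$-fibers of $M\to\Sigma$, which in the flat model are the projective lines ruling the hyperquadric $\mathcal{N}\subset\mathbb{CP}^3$, become holomorphic rational curves $\ell_x$ in $Z$; a computation of the Levi form of $M$, which controls the deformations of $\ell_x$ into one side of $M$, identifies their normal bundle as $\calO(1)\oplus\calO(1)$. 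Kodaira deformation theory, unobstructed since $H^1(\mathbb{P}^1,\calO(1)^{\oplus2})=0$, then produces the complete $4$-complex-dimensional family containing the $\ell_x$. The conformal structure on $\Sigma$, which is encoded in $M$, is transferred to the holomorphic contact structure on $Z$ transverse to the curves, and the real structure on $M$ (complex conjugation fixing $\Sigma$, antipodal on each $\mathbb{P}^1$) extends to an antiholomorphic involution of $Z$ whose fixed locus is $M$; this last fact pins the sign of the scalar curvature and gives $R<0$.

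Running the inverse twistor construction produces a complex ASD Einstein conformal $4$-manifold whose real slice $X$ carries a real-analytic Riemannian conformal class; the curves through points of $\Sigma\subset X$ are precisely those meeting $M\subset Z$, the locus where the conformal factor degenerates. It then remains to check the boundary behaviour: that for a defining function $r$ of $\Sigma\subset X$ some $\ol g_+=r^2g_+\in[\ol g_+]$ extends real-analytically across $\Sigma$ with $\ol g_+|_{T\Sigma}\in[h]$ and $|dr|^2_{\ol g_+}=1$ along $\Sigma$, so that $g_+:=r^{-2}\ol g_+$ is genuinely Poincar\'e--Einstein with conformal infinity $(\Sigma,[h])$. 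I expect this boundary analysis to be the main obstacle, along with the verification that the normal-bundle condition $N_{\ell_x}\cong\calO(1)\oplus\calO(1)$ and the contact structure persist over the whole deformation family (semicontinuity alone only gives a possibly smaller neighborhood). An alternative, purely analytic route --- put $g_+$ in Fefferman--Graham normal form $g_+=r^{-2}(dr^2+g_r)$, expand $g_r=h+r^2g_{(2)}+r^3g_{(3)}+\cdots$ with $g_{(2)}$ forced by $h$ and $g_{(3)}$ the free Neumann datum, and solve the resulting singular Cauchy problem by a Cauchy--Kovalevskaya argument in the analytic category --- also works, but selecting the $g_{(3)}$ that makes $W^+=0$ essentially reimports the twistor input, so the twistorial proof is the more natural one.
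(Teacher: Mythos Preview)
The paper does not prove Theorem~\ref{LeBrun}; it is quoted without proof as a result of LeBrun~\cite{LeB1}, with only the remark that it was obtained ``by a twistorial method.'' So there is no proof in the paper to compare your proposal against.

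That said, your sketch is a faithful outline of LeBrun's original argument: one builds the twistor CR manifold of $(\Sigma,[h])$ intrinsically, uses real analyticity to embed it as a hypersurface in a complex $3$-fold $Z$, checks that the $\mathbb{P}^1$-fibres over $\Sigma$ sit in $Z$ with normal bundle $\calO(1)\oplus\calO(1)$, invokes Kodaira to fill out a $4$-parameter family of twistor lines, and runs the inverse nonlinear-graviton/Ward construction (with the contact structure supplying the Einstein condition and the real structure fixing the sign of $\Lambda$). One notational slip: you write $M=\mathbb{P}(\mathbb{S}')|_\Sigma$, but $\mathbb{S}'$ is a bundle over the as-yet-unconstructed $4$-manifold $X$; at this stage you must use the intrinsic description $M\subset\mathbb{P}(\mathbb{C}T^*\Sigma)$ from \S\ref{hyperkahler-ambient}. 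The identification $M\cong\mathbb{P}(\mathbb{S}')|_\Sigma$ is only available \emph{after} $X$ exists (Theorem~\ref{CR-isom}), so invoking it here would be circular. Your own caveats about the boundary analysis and the persistence of the normal-bundle type across the deformation family are the right places to flag as requiring genuine work; these are exactly the technical points LeBrun addresses in \cite{LeB1}.
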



\subsection{Special defining functions}
For each choice of a representative metric $g$ on the conformal infinity $M$, one can construct a unique defining function $r$ of $M\subset X$ defined near $M$ which satisfies
\[
\overline{g}_+|_{TM}=g, \quad |dr|^2_{\overline{g}_+}=-2\Lambda,
\] 
where $\overline{g}_+:=r^2 g_+$ and $\Lambda=R/2n(n+1)<0$ is a multiple of the scalar curvature of $g_+$, which is necessarily a negative constant. This is called the {\it special defining function}
associated to $g$ (\cite{G}). The following proposition will be used in 
 the proof of Theorem \ref{CR-isom}.
 
 \begin{prop}\label{totally}
 Let $r$ be the special defining function associate to a representative metric $g$ on $M$. Then, $(M, g)$ is a totally geodesic submanifold of 
 $(X, \overline{g}_+)$. 
 \end{prop}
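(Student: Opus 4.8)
The plan is to work with the special defining function $r$ associated to $g$ and show that the second fundamental form of $M = \{r=0\}$ inside $(X, \overline{g}_+)$ vanishes. Recall that $M$ is totally geodesic in $(X, \overline g_+)$ precisely when the normal covariant derivative of the unit normal is tangent to $M$ along $M$, equivalently when $\overline\nabla_Y N$ is tangent to $M$ for all $Y \in TM$, where $N$ is the $\overline g_+$-unit (or unit-length-squared $-2\Lambda$) normal field $\operatorname{grad}_{\overline g_+} r$ and $\overline\nabla$ is the Levi-Civita connection of $\overline g_+$. So the second fundamental form is controlled by the Hessian $\overline\nabla^2 r$ restricted to $TM$.

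The key computation is to extract information about $\overline\nabla^2 r$ from the two defining properties of $r$, namely $\overline g_+|_{TM} = g$ and $|dr|^2_{\overline g_+} = -2\Lambda$ (a constant) near $M$. First I would differentiate the eikonal-type equation $\overline g_+^{ab}\,\partial_a r\,\partial_b r = -2\Lambda$: applying $\overline\nabla$ gives $(\overline\nabla^2 r)(\operatorname{grad} r, \cdot) = 0$, i.e. the Hessian annihilates the normal direction. This already shows $\operatorname{grad}_{\overline g_+} r$ is a geodesic vector field and in particular that the integral curves of $r$ hitting $M$ are geodesics. It remains to control the purely tangential block $(\overline\nabla^2 r)|_{TM\times TM}$ along $M$. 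For this I would use the first condition: in the collar identification $X \cong M \times (-\varepsilon,\varepsilon)_r$ induced by the geodesic flow of $\operatorname{grad}_{\overline g_+}r$, the metric takes the normal form $\overline g_+ = (-2\Lambda)^{-1}\,dr^2 + h_r$ for a one-parameter family of metrics $h_r$ on $M$ with $h_0 = g$, and the tangential Hessian along $M$ is, up to the constant $-2\Lambda$, proportional to $\partial_r h_r|_{r=0}$, which is exactly the second fundamental form. So the statement reduces to showing $\partial_r h_r|_{r=0} = 0$.

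The main obstacle — and the real content — is this last point, and here I would invoke the Einstein condition on $g_+$. Writing $g_+ = r^{-2}\overline g_+$, the Einstein equation $\operatorname{Ric}(g_+) = -3 g_+$ (in dimension $4$, normalizing to $\Lambda$) translates, via the standard conformal transformation formula between $\operatorname{Ric}(g_+)$ and $\operatorname{Ric}(\overline g_+)$, into an identity of the schematic form
\[
\overline\nabla^2 r = \frac{1}{r}\Bigl(|dr|^2_{\overline g_+} + 2\Lambda\Bigr)\overline g_+ + (\text{terms regular and }O(r)\text{ along }M).
\]
Since $|dr|^2_{\overline g_+} + 2\Lambda \equiv 0$ identically by the choice of special defining function, the singular $1/r$ term drops out, and evaluating the remaining regular part at $r=0$ shows that $\overline\nabla^2 r|_{TM}$ is the restriction of the term that carries the explicit factor of $r$, hence vanishes on $M$. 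Combined with the eikonal computation killing the normal block, this gives $\overline\nabla^2 r|_M = 0$ along $M$, so the second fundamental form of $(M,g)$ in $(X,\overline g_+)$ is zero and $M$ is totally geodesic. The one subtlety to be careful about is keeping track of the precise constants and confirming that the coefficient of the potentially dangerous $O(1)$ piece genuinely carries a factor of $r$; this is where the special defining function hypothesis $|dr|^2_{\overline g_+} = -2\Lambda$ is used in an essential, not merely normalizing, way.
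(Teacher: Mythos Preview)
Your approach is correct and is essentially the same as the paper's: both use the conformal transformation law for curvature together with the Einstein condition on $g_+$ and the special defining function property $|dr|^2_{\overline g_+}=-2\Lambda$ to conclude that $\overline\nabla^2 r$ vanishes along $M$. The paper packages this a bit more efficiently by working with the Schouten tensor rather than Ricci: writing $g_+=e^{2\U}\overline g_+$ with $\U=-\log|r|$ and using $P_{ab}=\Lambda(g_+)_{ab}$, the Schouten transformation formula gives directly
\[
\overline\nabla_a r_b \;=\; -\,r\,\overline P_{ab},
\]
which is exactly your schematic identity with the constants filled in (the ``regular $O(r)$ term'' is $-r\overline P$), and this makes your separate eikonal step for the normal direction unnecessary since the full Hessian, not just its tangential block, is visibly $O(r)$.
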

 
\begin{proof}
Since $g_+$ is an Einstein metric, its Schouten tensor 
\[
 P_{ab}:=\frac{1}{n-1}\Bigl(R_{ab}-\frac{R}{2n}(g_+)_{ab}\Bigr)
\]
 satisfies $P_{ab}=\Lambda (g_+)_{ab}$. Let $\overline P_{ab}$ be the Schouten tensor of the compactified metric $\overline g_+$ and $\overline\nabla$ the Levi-Civita connection of $\overline g_+$.
  Since $g_+$ and $\overline g_+$ are related by $g_+=e^{2\U}\overline g_+$ with $\U=-\log|r|$, the conformal rescaling formula for the Schouten tensor gives
\begin{align*}
P_{ab}&=\overline{P}_{ab}-\overline\nabla _a\U_b+\U_a\U_b-\frac{1}{2}\U_c\U^c(\overline{g}_+)_{ab} \\
&=\overline{P}_{ab}+\frac{\overline\nabla_a r_b}{r}-\frac{1}{2r^2}|dr|^2_{\overline g_+}(\overline g_+)_{ab} \\
&=\overline{P}_{ab}+\frac{\overline\nabla_a r_b}{r}+\frac{\Lambda}{r^2}
(\overline g_+)_{ab}.
\end{align*}
 It follows that $\overline\nabla_a r_b=-r\overline P_{ab}$ and hence $M$ is totally geodesic.
 \end{proof}


\subsection{CR manifolds}

Let $M$ be a $(2n+1)$-dimensional $C^\infty$-manifold. 
A rank $n$ complex sub-bundle $T^{1, 0}M\subset\mathbb{C}TM$ is called a {\it CR structure} if it satisfies
\[
T^{1, 0}M\cap T^{0, 1}M=\{0\}, \quad 
[\Gamma(T^{1, 0}M), \Gamma(T^{1, 0}M)]\subset \Gamma(T^{1, 0}M),
\]
where we set $T^{0, 1}M:=\overline{T^{1, 0}M}$.
When $M$ is a real hypersurface in a complex manifold $Y$, it is endowed with the natural CR structure $T^{1, 0}M:=(\mathbb{C}TM)\cap T^{1, 0}Y$. An embedding $f\colon M\to Y$ from a CR manifold $M$ to a complex manifold $Y$ is called a {\it CR embedding} when $f_*(T^{1, 0}M)\subset T^{1, 0}Y$.

When $T^{1, 0}M$ is a CR structure, its real part $H:={\rm Re}\,T^{1, 0}M\subset TM $ becomes a rank $2n$ real sub-bundle. The {\it Levi form} is the $TM/H$-valued hermitian form on $T^{1, 0}M$ defined by
\[
h(Z, W):=i[Z, \overline{W}]\  {\rm mod}\  H.
\]
We say a CR structure is {\it non-degenerate} when the Levi form is non-degenerate, and {\it strictly pseudoconvex} when it is positive definite for a trivialization of $TM/H$.


\subsection{The ambient metric associated with embedded CR manifolds}
To any non-degenerate CR manifold $M$ whose Levi form has signature $(p, q)$, one can associate a conformal metric of signature $(2p+1, 2q+1)$ called the {\it Fefferman metric} on a circle bundle over $M$ (\cite{F, BDS, Lee}). Then, we can define the ambient metric for the CR manifold $M$ as that of the associated Fefferman metric. However, when $M$ is realized as a real hypersurface in a complex manifold, there is a more direct way to define the notion of ambient metric, introduced by Fefferman \cite{F}. The construction is via an (approximate) solution to the complex Monge--Amp\`ere equation in \S\ref{Ricci-flat} and described as follows.

Let $(M, T^{1, 0}M)$ be a $(2n+1)$-dimensional non-degenerate CR manifold. We assume that $M$ is embedded as a real hypersurface in an $(n+1)$-dimensional complex manifold $Y$.
Let $\mathcal{L}\to Y$ be a holomorphic line bundle, and $\wt r$ a defining function of 
$\mathcal{L}|_M\subset \mathcal{L}$ homogeneous of degree $(1, 1)$, i.e., one can locally write $\wt r=|z^0|^2r$ with a fiber coordinate $z^0$ and a defining function $r$ of $M\subset Y$. 
If the Levi form of $M$ has signature $(p, q)$, the $(1, 1)$-form
\[
\omega_{\wt g}:=i\partial\overline{\partial} \wt r
\]
on $\wt{\mathcal{G}}:=\mathcal{L}\setminus\{\bo\}$ defines a pseudo-K\"ahler metric $\wt g$ of signature $(p+1, q+1)$ in a neighborhood of $\wt{\mathcal{G}}|_M$. We say $\wt g$ is an {\it ambient metric} for the CR manifold $M$ if it satisfies the approximate complex Monge--Amp\`ere equation 
\begin{equation}\label{approximate-MA}
({\rm Ric}(\wt g)=)\partial\overline\partial\log |\omega_{\widetilde g}^{n+2}|=\partial\overline\partial\cdot O(\rho^{\,n+2}),
\end{equation}
where $\rho$ is an arbitrary defining function of $\mathcal{L}|_M\subset \mathcal{L}$.
In the case where $Y=\mathbb{C}^{n+1}$ and $\mathcal{L}=K^{1/(n+2)}_{\mathbb{C}^{n+1}}$,
Fefferman \cite{F} constructed such an $\wt r$, which is unique modulo $O(\rho^{\,n+3})$. 
The order $O(\rho^{\,n+2})$ in the right-hand side of \eqref{approximate-MA} is optimal in general if we consider only smooth defining functions. 

Restricting $\wt g$ to $\wt{\mathcal{G}}|_M$, we obtain a conformal metric $[g_{\rm F}]$, called the {\it Fefferman metric}, on the circle bundle $\mathcal{C}=\wt{\mathcal{G}}|_M/\mathbb{R}_+$ over $M$, and $\wt g$ can be interpreted as the Fefferman--Graham ambient metric for $(\mathcal{C}, [g_{\rm F}])$.

In the case of the hyperquadric 
\[
M=\bigl\{[\xi]\in \mathbb{CP}^{n+1}\ \big|\ \wt r:= -|\xi^0|^2-|\xi^1|^2-\cdots- 
|\xi^q|^2+|\xi^{q+1}|^2+\cdots+|\xi^{n+1}|^2=0 \bigr\},
\]
which is the flat model of non-degenerate CR manifold, the $(1, 1)$-form $(i/2)\partial\overline\partial \wt r$ gives the flat ambient metric 
\[
\wt g= -|d\xi^0|^2-|d\xi^1|^2-\cdots- 
|d\xi^q|^2+|d\xi^{q+1}|^2+\cdots+|d\xi^{n+1}|^2
\] 
on $\wt{\mathcal{G}}=\mathcal{O}_{\mathbb{P}^{n+1}}(-1)\setminus\{\bo\}=\mathbb{C}^{n+2}\setminus\{0\}$.


\subsection{The smooth Cheng--Yau metric}
By the computation in \S\ref{Ricci-flat}, the $(1, 1)$-form 
\[
\omega_{\rm CY}:=-i\partial\overline{\partial}\log|\wt r|
\]
descends to the K\"ahler form of a (pseudo-)K\"ahler metric $g_{\rm CY}$ on $Y\setminus M$ in a neighborhood of $M$ which satisfies the approximate Einstein equation
\[
{\rm Ric} (g_{\rm CY})+(n+2)g_{\rm CY}=\partial\overline\partial\cdot O(\rho^{\,n+2}).
\]
The K\"ahler metric $g_{\rm CY}$ has signature $(p+1, q)$ on $\{r<0\}$ and $(q+1, p)$ on $\{r>0\}$.
When $M$ is the boundary of a bounded strictly pseudoconvex domain $\Omega\subset\mathbb{C}^{n+1}$, Cheng and Yau \cite{ChY} constructed 
a complete K\"ahler-Einstein metric on $\Omega$, called the {\it Cheng--Yau metric}, via the exact exact solution to the complex Monge--Amp\`ere equation which has a logarithmic singularity at the boundary. In this case, the metric $g_{\rm CY}$ provides a smooth approximation to the Cheng--Yau metric; accordingly, even in more general settings, 
we refer to $g_{\rm CY}$ as the {\it smooth Cheng--Yau metric}.


\section{Hyperk\"ahler ambient metrics}\label{hyperkahler-ambient}

\subsection{Twistor CR manifolds}
We will review the definition of twistor CR manifolds introduced by LeBrun \cite{LeB2}.

Let $(\Sigma, [h])$ be a 3-dimensional $C^\infty$ conformal manifold. We extend $h\in[h]$ to a complex bi-linear form on $\mathbb{C}T^*\Sigma$ and set 
\[
\wh M:=\{\zeta\in \mathbb{C}T^*\Sigma\ |\ h(\zeta, \zeta)=0, \zeta\neq0\}
\]
and 
\[
M:=\wh M/{\mathbb{C}^*}\subset \mathbb{P}(\mathbb{C}T^*\Sigma).
\]
Then, $M$ is a 5-dimensional manifold which is a $\mathbb{P}^1$-bundle over $\Sigma$. Let 
\[
\omega:=d(\zeta_i dx^i)=d\zeta_i\wedge dx^i
\]
be the canonical 2-form on $\mathbb{C}T^*\Sigma$, where $(x^i)$ are local coordinates of $\Sigma$ and $(\zeta_i)$ are associated fiber coordinates for
 $\mathbb{C}T^*\Sigma$. Since the distribution
 \[
 D:=\{V\in \mathbb{C}T\wh M\ |\ (V\lrcorner\, \omega)|_{\mathbb{C}T\wh M}=0\}
 \]
is $\mathbb{C}^*$-invariant, we have a rank-2 sub-bundle
\[
T^{0, 1}M:=\mathring\pi_* D\subset \mathbb{C}TM,
\]
where $\mathring\pi\colon \wh M\to M$ is the projection. It can be shown that $T^{1, 0}M:=\overline{T^{0, 1}M}$ defines a non-degenerate CR structure whose Levi form has Lorentzian (neutral) signature $(1, 1)$. The CR manifold $(M, T^{1, 0}M)$ is determined by the conformal structure $[h]$ and is called the {\it twistor CR manifold} associated to $(\Sigma, [h])$. 

The CR structure is described in terms of local coordinates as follows:
\begin{prop}\label{twistor-CR-local}
Fix a representative metric $h\in[h]$, and let $(x^i, \zeta_i)$ be local coordinates of 
$\mathbb{C}T^*\Sigma$ as above. Then we have
\begin{equation}\label{D}
D=\Bigl\{c\,\zeta^i X_i+w_i\frac{\partial}{\partial \overline{\zeta}_i}\ \Big|\ c\in\mathbb{C},\ \overline{\zeta}_iw^i=0\Bigr\}, \quad 
\mathring\pi^{*}(T^{0, 1}M)\cong D\Big/\mathbb{C}\,\overline\zeta_i\frac{\partial}{\partial \overline\zeta_i},
\end{equation}
where indices are raised or lowered by $h$, and $X_i$ denotes the horizontal lift of $\partial/\partial x^i$ by the Levi-Civita connection of $h$.
\end{prop}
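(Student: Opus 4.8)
The plan is to carry out the whole computation in the local frame of $\mathbb{C}T(\mathbb{C}T^*\Sigma)$ adapted to the Levi--Civita connection $\nabla$ of the fixed metric $h$. Pick local coordinates $(x^i)$ on $\Sigma$ and the induced fibre coordinates $(\zeta_i)$ on $\mathbb{C}T^*\Sigma$; let $X_i$ be the horizontal lift of $\partial/\partial x^i$, and let $\delta\zeta_i:=d\zeta_i-\Gamma_{ji}{}^k\zeta_k\,dx^j$ (with $\delta\overline{\zeta}_i:=\overline{\delta\zeta_i}$) be the covariant differential of the fibre coordinate, so that $(dx^i,\delta\zeta_i,\delta\overline{\zeta}_i)$ is the coframe dual to $(X_i,\partial/\partial\zeta_i,\partial/\partial\overline{\zeta}_i)$. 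Two short identities, each using one defining property of $\nabla$, do all the work. Torsion-freeness gives $\omega=d(\zeta_i\,dx^i)=\delta\zeta_i\wedge dx^i$, since the leftover $dx^j\wedge dx^i$ term is the torsion and vanishes. Metric compatibility $\nabla h=0$ gives, for the defining function $F:=h^{ij}\zeta_i\zeta_j$ of $\widehat M$, that $dF=2\zeta^i\,\delta\zeta_i$: it has no $\delta\overline{\zeta}$-component because $F$ is holomorphic in $\zeta$, and no $dx$-component because the naive $dx$-terms cancel against $\nabla h=0$. Conjugating, $d\overline F=2\overline{\zeta}^i\,\delta\overline{\zeta}_i$.

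Since $\widehat M$ is the zero locus of the $\mathbb{C}$-valued function $F$, its complexified tangent bundle is $\mathbb{C}T\widehat M=\ker dF\cap\ker d\overline F$; by the formulas above, a vector $V$ with horizontal part $a^iX_i$ and vertical part $b_i\,\partial/\partial\zeta_i+w_i\,\partial/\partial\overline{\zeta}_i$ lies in $\mathbb{C}T\widehat M$ precisely when $\zeta_ib^i=0$ and $\overline{\zeta}_iw^i=0$. I would then compute $V\lrcorner\,\omega=b_i\,dx^i-a^i\,\delta\zeta_i$ and note that $(V\lrcorner\,\omega)|_{\mathbb{C}T\widehat M}=0$ is equivalent to $V\lrcorner\,\omega\in\mathrm{span}_{\mathbb{C}}(dF,d\overline F)=\mathrm{span}_{\mathbb{C}}(\zeta^i\delta\zeta_i,\ \overline{\zeta}^i\delta\overline{\zeta}_i)$. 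Comparing components in the linearly independent coframe $(dx^i,\delta\zeta_i,\delta\overline{\zeta}_i)$ forces $b_i=0$, kills the $\overline{\zeta}^i\delta\overline{\zeta}_i$-term, and forces $a^i$ to be a multiple of $\zeta^i$. Feeding $b_i=0$ back into the two membership conditions for $\mathbb{C}T\widehat M$ leaves only $\overline{\zeta}_iw^i=0$, which is exactly the asserted description of $D$, a distribution of rank $1+2=3$.

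For the quotient statement, I would describe the kernel of $\mathring\pi_*\colon\mathbb{C}T\widehat M\to\mathbb{C}TM$ along a fibre of $\widehat M\to M$: it is the complexified tangent space to the $\mathbb{C}^*$-orbit, spanned over $\mathbb{C}$ by $\zeta_i\,\partial/\partial\zeta_i$ and $\overline{\zeta}_i\,\partial/\partial\overline{\zeta}_i$. Of these two generators, the first is not in $D$ (it has nonzero $\partial/\partial\zeta$-component), while the second is, because its $\partial/\partial\overline{\zeta}$-component $\overline{\zeta}_i$ satisfies $\overline{\zeta}_i\overline{\zeta}^i=\overline F=0$ on $\widehat M$. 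Hence $D\cap\ker\mathring\pi_*=\mathbb{C}\,\overline{\zeta}_i\,\partial/\partial\overline{\zeta}_i$, so $\mathring\pi_*$ restricted to $D$ is a surjection onto $\mathring\pi_*D=T^{0,1}M$ with this line as kernel; since $D$ is $\mathbb{C}^*$-invariant, this induces the claimed bundle isomorphism $D\big/\mathbb{C}\,\overline{\zeta}_i\,\partial/\partial\overline{\zeta}_i\cong\mathring\pi^*(T^{0,1}M)$, and the ranks match ($3-1=2$).

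The routine part is the two connection identities and the ensuing linear algebra. The only point that genuinely needs care is that $\widehat M$ is cut out by a $\mathbb{C}$-valued equation, so one must take $\mathbb{C}T\widehat M=\ker dF\cap\ker d\overline F$ rather than just $\ker dF$, and correspondingly the annihilator of $\mathbb{C}T\widehat M$ entering the definition of $D$ is two-dimensional; this is precisely what produces the extra constraint $\overline{\zeta}_iw^i=0$ and makes the dimension count come out right. I expect no serious obstacle beyond that bookkeeping.
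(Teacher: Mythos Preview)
Your proof is correct and follows essentially the same approach as the paper's: characterize $\mathbb{C}T\widehat M$ as $\ker dF\cap\ker d\overline F$, compute $V\lrcorner\,\omega$, and read off the constraints from the conormal span of $dF,d\overline F$. The only difference is organizational: the paper reduces to normal coordinates at a fixed point (so $X_i=\partial/\partial x^i$ and $\delta\zeta_i=d\zeta_i$ there), whereas you carry the connection-adapted coframe $(dx^i,\delta\zeta_i,\delta\overline\zeta_i)$ throughout and invoke torsion-freeness and $\nabla h=0$ to kill the extra terms---arguably a bit cleaner, since it works at every point without the coordinate-independence remark.
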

\begin{proof}
Since these expressions are independent of the choice of coordinates, we can take $(x^i)$ to be normal coordinates of $h$ around a fixed point $p\in\Sigma$. A complex vector 
\[
V=u^i\frac{\partial}{\partial x^i}+v_i\frac{\partial}{\partial \zeta_i}+w_i\frac{\partial}{\partial \overline\zeta_i}
\]
satisfies $V\in \mathbb{C}T\wh M$ if and only if $V\cdot h(\zeta, \zeta)=V\cdot \overline{h(\zeta, \zeta)}=0$, and this condition can be written as
\[
v^i\zeta_i=w^i\overline\zeta_i=0
\]
on the fiber of $p$. Moreover, $V\in D$ if and only if 
\[
V\lrcorner\, \omega=v_idx^i-u^id\zeta_i
\]
is a linear combination of $\zeta^id\zeta_i$ and $\overline{\zeta^i}d\overline\zeta_i$. 
Thus, $v_i=0, u^i=c\,\zeta^i\,(c\in\mathbb{C})$, so we obtain the above expression for $D$. Since ${\rm Ker}\,\mathring\pi_*=\mathbb{C}\zeta_i(\partial/\partial \zeta_i)\oplus
\mathbb{C}\overline\zeta_i(\partial/\partial \overline\zeta_i)$, we have 
$D\cap {\rm Ker}\,\mathring\pi_*=\mathbb{C}\overline\zeta_i(\partial/\partial \overline\zeta_i)$ and obtain the expression for $\mathring\pi^{*}(T^{0, 1}M)$.
\end{proof}


\subsection{The complex structures $\mathbb{I}_r$ on the conformal spinor bundle}

Now we assume that $(\Sigma, [h])$ is real analytic and realize it as the conformal infinity of an anti self-dual Poincar\'e-Einstein manifold $(X, g_+)$ by using Thereom \ref{LeBrun}. 
We let $\mathbb{S}, \mathbb{S}'$ be the conformal spinor bundles over $(X, [\overline{g}_+])$ and set
\[
\wt{\mathcal{G}}:=\mathbb{S}'\setminus\{\bo\}.
\] 
For each choice of a defining function $r$ of $\Sigma\subset X$, we have the representative metric $\overline{g}_+=r^2g_+$ and the skew forms $\overline{\e}_{AB}, \overline{\e}_{A'B'}$
satisfying $(\overline{g}_+)_{ab}=\overline{\e}_{AB}\overline{\e}_{A'B'}$. On $X\setminus\Sigma$, 
we have $(g_+)_{ab}=\e_{AB}\e_{A'B'}$ with
\[
\e_{AB}=\frac{1}{|r|}\overline{\e}_{AB}, \quad \e_{A'B'}=\frac{1}{|r|}\overline{\e}_{A'B'}
\]
by the rescaling formula \eqref{e-rescale}.

Let $\overline\nabla$ be the connection on $\mathbb{S}'$ induced by the Levi-Civita connection of $\overline{g}_+$. The associated horizontal distribution on $\wt{\mathcal{G}}$ is given by the kernel of the 1-forms
\[
d\pi^{A'}+\overline{\Gamma}_{cB'}{}^{A'}\pi^{B'}\theta^c,\quad \overline{d\pi^{A'}}+\overline{\Gamma}_{c\overline{B'}}{}^{\overline{A'}}\overline{\pi^{B'}}\theta^c
\]
where $(\overline{\Gamma}_{cB'}{}^{A'}\theta^c)$ is the connection form of $\overline\nabla$, and $\overline{\Gamma}_{c\overline{B'}}{}^{\overline{A'}}\theta^c:=\overline{\overline{\Gamma}_{cB'}{}^{A'}\theta^c}$. The horizontal lift of a vector $v=v^a(\partial/\partial x^a)\in\mathbb{C}TX$ to $\mathbb{C}T_\pi\wt{\mathcal{G}}$ is written as
\[
\wt v=v^a\frac{\partial}{\partial x^a}-v^c\Bigl(\overline\Gamma_{cB'}{}^{A'}\pi^{B'}\frac{\partial}{\partial \pi^{A'}}+\overline\Gamma_{c\overline{B'}}{}^{\overline{A'}}\overline{\pi^{B'}}\frac{\partial}{\partial \overline{\pi^{A'}}}\Bigr).
\]

We define an almost complex structure $\mathbb{I}_r$ on $\wt{\mathcal{G}}$ as in \S\ref{def-I}; namely, we define
\begin{equation}\label{T01-I}
{}^{\mathbb{I}_r}T^{0, 1}_\pi \wt{\mathcal{G}}:=\wt{\mathcal{A}}_{[\pi]}\oplus T^{0,1}_\pi({\rm fiber}),
\end{equation}
where $\wt{\mathcal{A}}_{[\pi]}$ is the horizontal lift of the $\a$-plane $\mathcal{A}_{[\pi]}\subset\mathbb{C}TX$ with respect to $\overline\nabla$. We can also define an almost complex structure $\mathbb{I}_1$ on $\wt{\mathcal{G}}|_{X\setminus\Sigma}$ by using the connection $\nabla$ determined by the  metric $g_+$ in place of $\overline\nabla$. Then, $\mathbb{I}_1$ is integrable by Theorem \ref{I-integrable} since $g_+$ is an anti self-dual Einstein metric. 
The following proposition shows that the almost complex structures $\mathbb{I}_r$ are also integrable and related to each other by dilations on $\wt{\mathcal{G}}$:

\begin{prop}\label{dilation}
Let $r, \wh r=e^\U r$ be defining functions of $\Sigma\subset X$. Then the map
\begin{equation}\label{biholo1}
\d_{e^\U}\colon (\wt{\mathcal{G}}, \mathbb{I}_{\wh r})\longrightarrow
(\wt{\mathcal{G}}, \mathbb{I}_{r}), \quad \d_{e^\U}(\pi):=e^\U\pi
\end{equation}
is a biholomorphism. Similarly, the map
\begin{equation}\label{biholo2}
\d_{|r|}\colon (\wt{\mathcal{G}}|_{X\setminus\Sigma}, \mathbb{I}_{r})\longrightarrow
(\wt{\mathcal{G}}|_{X\setminus\Sigma}, \mathbb{I}_{1})
\end{equation}
is a biholomorphism. In particular, the almost complex structure $\mathbb{I}_r$ is integrable.
\end{prop}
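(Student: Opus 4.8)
The plan is to show that each map in the proposition is holomorphic by comparing the annihilators of the $(0,1)$-subspaces, using the conformal transformation formula \eqref{spinor-conf} for spinor connections. The starting point is the explicit description \eqref{T01-I}: the $(0,1)$-subspace of $\mathbb{I}_r$ is spanned by the horizontal lift of the $\a$-plane $\mathcal{A}_{[\pi]}$ (with respect to $\overline\nabla$) together with $T^{0,1}_\pi(\mathrm{fiber})$. Since $\d_{e^\U}$ acts on the fibers by complex-linear scaling, it automatically preserves $T^{0,1}(\mathrm{fiber})$; the content is in the horizontal parts.

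First I would compute the differential of $\d_{e^\U}$ on a horizontal lift. The key observation is that the notion of $\a$-plane $\mathcal{A}_{[\pi]}\subset\mathbb{C}TX$ depends only on the line $[\pi]\in\mathbb{P}(\mathbb{S}')$, and the projectivization map $\wt{\mathcal{G}}\to\mathbb{P}(\mathbb{S}')$ intertwines $\d_{e^\U}$ with the identity; so the underlying $\a$-plane is unchanged. What changes is the horizontal distribution itself, because $\overline\nabla$ for $\overline{g}_+=r^2g_+$ and $\wh{\overline\nabla}$ for $\wh{\overline{g}}_+=\wh r^2 g_+=e^{2\U}\overline{g}_+$ differ. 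By \eqref{spinor-conf}, on $\mathbb{S}'$ we have $\wh{\overline\nabla}_a\eta^{B'}=\overline\nabla_a\eta^{B'}+\d_{A'}{}^{B'}\U_{AC'}\eta^{C'}$, so the difference of connection forms is $\wh{\overline\Gamma}_{cB'}{}^{A'}-\overline\Gamma_{cB'}{}^{A'}=\d_{B'}{}^{A'}\U_{c}$ after suitable index bookkeeping, i.e.\ a \emph{scalar} multiple of the identity in the spinor indices. Pulling back the $1$-forms $d\pi^{A'}+\overline\Gamma_{cB'}{}^{A'}\pi^{B'}\theta^c$ under $\d_{e^\U}$ (which sends $\pi^{A'}\mapsto e^\U\pi^{A'}$, hence $d\pi^{A'}\mapsto e^\U(d\pi^{A'}+\U_c\pi^{A'}\theta^c)$) and dividing by $e^\U$ should recover exactly the defining $1$-forms of the $\wh{\overline\nabla}$-horizontal distribution, because the extra term $\U_c\pi^{A'}\theta^c$ matches the connection-form shift. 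Combined with the fact that $\d_{e^\U}^*(\pi_{A'}\theta^{AA'})$ is proportional to $\pi_{A'}\theta^{AA'}$ (the $\a$-plane annihilator is conformally the same line), this shows $\d_{e^\U}^*$ maps $(1,0)$-forms for $\mathbb{I}_r$ to $(1,0)$-forms for $\mathbb{I}_{\wh r}$, i.e.\ \eqref{biholo1} is holomorphic; it is a diffeomorphism with holomorphic inverse $\d_{e^{-\U}}$, hence a biholomorphism.

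The second claim \eqref{biholo2} is the special case $r\rightsquigarrow 1$ with $\U=-\log|r|$, except that here the target uses $\nabla$ (the Levi-Civita connection of $g_+$ itself, not a rescaled metric). The point is that $\mathbb{I}_1$ as defined via $\nabla$ on $\wt{\mathcal{G}}|_{X\setminus\Sigma}$ is literally the integrable complex structure $\mathbb{I}$ of \S\ref{def-I} applied to the anti self-dual Einstein manifold $(X\setminus\Sigma,g_+)$ (with $\Lambda<0$), so Theorem \ref{I-integrable} applies directly to give its integrability. Then the same conformal-rescaling computation as above, now comparing $\nabla$ (for $g_+$) with $\overline\nabla$ (for $\overline{g}_+=r^2 g_+$) via \eqref{spinor-conf} with $\U=\log|r|$, shows $\d_{|r|}$ pulls back the $\mathbb{I}_1$-structure to the $\mathbb{I}_r$-structure. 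Since biholomorphisms transport integrability, $\mathbb{I}_r$ is integrable on $\wt{\mathcal{G}}|_{X\setminus\Sigma}$; and because $\mathbb{I}_r$ is globally defined on all of $\wt{\mathcal{G}}$ while $\wt{\mathcal{G}}|_{X\setminus\Sigma}$ is dense, the Nijenhuis tensor of $\mathbb{I}_r$, which vanishes on the dense open set, vanishes everywhere, so $\mathbb{I}_r$ is integrable on $\wt{\mathcal{G}}$.

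The main obstacle I anticipate is the index-level verification that the connection-form difference under $g\mapsto e^{2\U}g$ is exactly $\d_{B'}{}^{A'}\U_c$ \emph{as a form on the base pulled up to $\wt{\mathcal{G}}$}, and that this precisely cancels the term produced by $d(e^\U\pi^{A'})$; one must be careful that \eqref{spinor-conf} is written with a mixed index $\U_{AC'}$ (tangent index split into spinor indices), and that the contraction with $\theta^c=\theta^{CC'}$ together with the $\e$-rescaling \eqref{e-rescale} reorganizes it correctly. A secondary point to check is that the factor $|r|$ versus $r$ (the sign/absolute value in the defining function) does not obstruct smoothness of $\d_{|r|}$ across $\Sigma$ — but since we only assert \eqref{biholo2} on $X\setminus\Sigma$, this is not an issue here. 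Everything else is bookkeeping.
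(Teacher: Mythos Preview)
Your overall strategy is the paper's: use the conformal change law \eqref{spinor-conf} to compare the horizontal distributions for $\overline\nabla$ and $\wh{\overline\nabla}$, check the fiber part separately, and then reduce integrability of $\mathbb{I}_r$ to that of $\mathbb{I}_1$ (which is Theorem~\ref{I-integrable}) by a density/continuity argument for the Nijenhuis tensor. That framing is correct.

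There is, however, a concrete error in your key step. The connection-form difference is \emph{not} $\d_{B'}{}^{A'}\U_c$: from \eqref{spinor-conf} one has
\[
\wh{\overline\Gamma}_{cB'}{}^{A'}-\overline\Gamma_{cB'}{}^{A'}=\d_{C'}{}^{A'}\U_{CB'}\qquad (c=CC'),
\]
which is not a scalar multiple of the identity in the primed indices. Consequently your pullback computation does not give $\d_{e^\U}^*(\d\pi^{A'})=e^\U\,\wh\d\pi^{A'}$ on the nose; the discrepancy is
\[
e^{-\U}\d_{e^\U}^*(\d\pi^{A'})-\wh\d\pi^{A'}
=\U_{CC'}\pi^{A'}\theta^{CC'}-\U_{CB'}\pi^{B'}\theta^{CA'}.
\]
This term is nonzero in general, so the ``exactly cancels'' claim fails. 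What saves the argument is that this difference annihilates every $v=\xi^C\pi^{C'}\partial_{CC'}\in\mathcal A_{[\pi]}$ (both summands contract to $\U_{CB'}\xi^C\pi^{B'}\pi^{A'}$), hence it lies in the span of the other $(1,0)$-forms $\pi_{D'}\theta^{DD'}$. So your form-pullback route can be completed, but only after this extra observation.

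The paper sidesteps this by working dually: it pushes forward horizontal lifts of vectors $v=\xi^A\pi^{A'}\partial_a\in\mathcal A_{[\pi]}$ directly. Because the extra connection term is contracted against $v^c\pi^{A'}$ from the outset, it collapses immediately to $\U_a v^a\,\pi^{B'}$, and one reads off $(\d_{e^\U})_*(\wh v_\pi)=\wt v_{e^\U\pi}$ with no residual term to reabsorb. So the paper's vector-field computation is slightly cleaner than your coframe pullback, but the content is the same.
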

\begin{proof}
Let $\overline\nabla$, $\wh\nabla$ be the connections on $\mathbb{S}'$ determined by $r^2g_+$, $\wh r^{\,2}g_+$, and let $\wt v, \wh v$ be the horizontal lifts of a complex vector $v=v^a(\partial/\partial x^a)=\xi^{A}\pi^{A'}(\partial/\partial x^a)\in \mathcal{A}_{[\pi]}$ by $\overline\nabla$, $\wh\nabla$. By the conformal transformation formula \eqref{spinor-conf}, the connection forms are related by
\[
\overline\Gamma_{cA'}{}^{B'}=\wh\Gamma_{cA'}{}^{B'}-\d_{C'}{}^{B'}\U_{CA'}.
\]
Hence we have
\begin{align*}
\overline\Gamma_{cA'}{}^{B'}v^c\pi^{A'}&=\wh\Gamma_{cA'}{}^{B'}v^c\pi^{A'}-\U_{CA'}\xi^C\pi^{A'}\pi^{B'} \\
&=\wh\Gamma_{cA'}{}^{B'}v^c\pi^{A'}-\U_a v^a\pi^{B'}.
\end{align*}
Using this formula and 
\[
(\d_{e^{\U}})_*\frac{\partial}{\partial x^a}=\frac{\partial}{\partial x^a}+e^{\U}\U_a\Bigl(\pi^{B'}\frac{\partial}{\partial \pi^{B'}}+\overline{\pi^{B'}}\frac{\partial}{\partial \overline{\pi^{B'}}}\Bigr), \quad (\d_{e^{\U}})_*\frac{\partial}{\partial \pi^{B'}}=e^{\U}\frac{\partial}{\partial \pi^{B'}},
\]
we have
\begin{align*}
(\d_{e^{\U}})_* (\wh v_\pi)=v^a\frac{\partial}{\partial x^a}-v^c\Bigl(\overline\Gamma_{cB'}{}^{A'}e^\U\pi^{B'}\frac{\partial}{\partial \pi^{A'}}+\overline\Gamma_{c\overline{B'}}{}^{\overline{A'}}e^\U\overline{\pi^{B'}}\frac{\partial}{\partial \overline{\pi^{A'}}}\Bigr)=\wt v_{e^\U\pi}.
\end{align*}
This implies that the horizontal lifts of $\mathcal{A}_{[\pi]}$ by $\wh\nabla$ and $\overline\nabla$ are related by $(\d_{e^\U})_*$. Also,  $\d_{e^\U}$ is biholomorphic on each fiber $(\cong \mathbb{C}^2\setminus\{0\})$ of $\wt{\mathcal{G}}$. Thus, the map \eqref{biholo1} is biholomorphic. 
The case of the map \eqref{biholo2} can be shown in the same way.
\end{proof}

For each $\pi\in\wt{\mathcal{G}}_x$, we define an endomorphism
\[
I_\pi\colon \mathbb{C}T_x X\longrightarrow \mathbb{C}T_x X
\]
by
\[
v^{AA'}\frac{\partial}{\partial x^a}\longmapsto v^{AB'}I_{B'}{}^{A'}\frac{\partial}{\partial x^a}
\]
with 
\[
I_{B'}{}^{A'}:=\frac{-i}{\|\pi\|^2_r}
\bigl(\overline\e_{C'B'}\pi^{C'}(\sigma\pi)^{A'}+\overline\e_{C'B'}(\sigma\pi)^{C'}\pi^{A'}\bigr),
\]
where
\begin{equation}\label{norm-r}
\|\pi\|^2_r:=\overline\e_{C'D'}(\sigma\pi)^{C'}\pi^{D'}\ (>0).
\end{equation}
Note that $I_\pi$ is a real operator which is independent of the choice of $r$ and that we may use $\e_{A'B'}, \|\pi\|^2=\e_{C'D'}(\sigma\pi)^{C'}\pi^{D'}$ instead of $\overline{\e}_{A'B'}, \|\pi\|_r^2$ when we work on $\wt{\mathcal{G}}|_{X\setminus\Sigma}$.
As we showed in \S\ref{def-I}, in terms of the decomposition 
\begin{equation}\label{decom-G}
T_\pi\wt{\mathcal{G}}=\wt{T_x X}\oplus T_\pi({\rm fiber})\cong T_x X\oplus \mathbb{S}'_x,
\end{equation}
where $\wt{T_x X}$ denotes the horizontal lift by $\overline\nabla$, the complex structure 
$\mathbb{I}_r$ can be expressed as
\[
\mathbb{I}_r=I_\pi \oplus I_{\mathbb{S}'_x}.
\]
Here, $I_{\mathbb{S}'_x}$ denotes the standard complex structure on $\mathbb{S}'_x$.


\subsection{The complex structures $\mathbb{J}_r$ on $\wt{\mathcal{G}}$}

As in \S\ref{def-J}, we define an almost complex structure $\mathbb{J}_r$ on $\wt{\mathcal{G}}$ by 
\[
J_r:=J_\pi\oplus \sigma
\]
in terms of the decomposition \eqref{decom-G}, where $J_\pi\colon \mathbb{C}T_xX\to \mathbb{C}T_xX$ is given by
\[
v^{AA'}\frac{\partial}{\partial x^a}\longmapsto v^{AB'}J_{B'}{}^{A'}\frac{\partial}{\partial x^a}
\]
with
\[
J_{B'}{}^{A'}:=\frac{-1}{\|\pi\|_r^2}
\bigl(\overline\e_{C'B'}\pi^{C'}\pi^{A'}+\overline\e_{C'B'}(\sigma\pi)^{C'}(\sigma\pi)^{A'}\bigr).
\]
As in the case of $I_{B'}{}^{A'}$, $J_{B'}{}^{A'}$ is a real operator which is independent of the choice of $r$, and on $\wt{\mathcal{G}}|_{X\setminus \Sigma}$ we may use $\e_{A'B'}, \|\pi\|^2$ in place of $\overline\e_{A'B'}, \|\pi\|_r^2$. Since $J_\pi$ anti-commutes with $I_\pi$ and $\sigma$ is $\mathbb{C}$-anti-linear, we have
\[
\mathbb{I}_r\mathbb{J}_r=-\mathbb{J}_r\mathbb{I}_r.
\]
We can also define $\mathbb{J}_1$ on $\wt{\mathcal{G}}|_{X\setminus\Sigma}$ by using the horizontal lift $\wt{T_x X}$ by $\nabla$ instead of $\overline\nabla$. Then, $\mathbb{J}_1$ is integrable by Theorem \ref{J-integrable}. 

The $(0, 1)$-subspace of $\mathbb{J}_r$ is given by
\begin{align*}
{}^{\mathbb{J}_r}T^{0, 1} _\pi\wt{\mathcal{G}}
&=\wt{\mathcal{A}}_{[\pi-i\sigma(\pi)]}\oplus\{1\otimes\eta+i\otimes\sigma(\eta)\ |\ \eta\in{\mathbb{S}'}_x\} \\
&\subset\mathbb{C}\wt{T_x X}\oplus (\mathbb{C}\otimes_{\mathbb{R}}\mathbb{S}'_x),
\end{align*}
where $\wt{\mathcal{A}}_{[\pi-i\sigma(\pi)]}$ is the horizontal lift by $\overline\nabla$.
Then, by a similar computation to the proof of Proposition \ref{dilation}, we can show that for defining functions $r, \wh r=e^\U r$, the dilations
\[
\d_{e^\U}\colon (\wt{\mathcal{G}}, \mathbb{J}_{\wh r})\longrightarrow (\wt{\mathcal{G}}, \mathbb{J}_{r}), \quad \d_{|r|} \colon (\wt{\mathcal{G}}|_{X\setminus \Sigma}, \mathbb{J}_{r})\longrightarrow (\wt{\mathcal{G}}|_{X\setminus \Sigma}, \mathbb{J}_{1})
\]
are biholomorphisms. In particular, $\mathbb{J}_r$ is also integrable. 


\subsection{Embedding of the twistor CR manifold into the twistor space $\mathbb{P}(\mathbb{S}')$}\label{embedding}

It follows from Proposition \ref{dilation} that the complex structure on $\mathbb{P}(\mathbb{S}')$ induced by $\mathbb{I}_r$ is independent of the choice of $r$, and the hypersurface $\mathbb{P}(\mathbb{S}')|_\Sigma\subset\mathbb{P}(\mathbb{S}')$ is equipped with the induced CR structure. We will prove that there exists a canonical CR isomorphism  $f\colon \mathbb{P}(\mathbb{S}')|_\Sigma\to M$. 
\begin{lem}\label{def-xi}
Let $r$ be a defining function of $\Sigma\subset X$, and let $x\in \Sigma$.Then, for any $\pi^{A'}\in \mathbb{S}'_x\setminus\{0\}$, there exists $\xi^A\in\mathbb{S}_x\setminus\{0\}$, unique up to scalar multiples, satisfying
\begin{equation}\label{tangential}
r_{AA'}\xi^A\pi^{A'}=0.
\end{equation}
\end{lem}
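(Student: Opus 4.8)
The plan is to read \eqref{tangential} as the condition that the $\a$-plane direction $\xi^A\pi^{A'}$ lie in $\ker dr_x = \mathbb{C}T_x(\{r = r(x)\})$: indeed $r_{AA'}\xi^A\pi^{A'}$ is simply $dr$ evaluated on the vector $\xi^A\pi^{A'}(\partial/\partial x^a)\in\mathbb{C}T_xX$, so the equation is intrinsic and for computations I may use any representative metric; I will take $\ol g_+ = r^2 g_+$ together with its skew forms, so that $(\ol g_+)_{ab} = \ol\e_{AB}\ol\e_{A'B'}$. Writing $\mu_A := r_{AA'}\pi^{A'}$, a spinor with one unprimed index, the equation becomes $\xi^A\mu_A = 0$. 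Since $\mathbb{S}_x$ is two-dimensional, the solutions $\xi^A$ form a one-dimensional space precisely when $\mu_A\neq0$, namely $\xi^A\in\mathbb{C}\,\mu^A$ with $\mu^A := \ol\e^{AB}\mu_B\neq0$ (which satisfies $\mu^A\mu_A = \ol\e^{AB}\mu_B\mu_A = 0$ by skew-symmetry). So the lemma reduces to showing $r_{AA'}\pi^{A'}\neq0$ for every non-zero $\pi^{A'}$, equivalently that the $2\times2$ matrix $(r_{AA'})$ is invertible.

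For this, $(r_{AA'})$ is invertible iff $\det(r_{AA'})\neq0$, and the standard spinor norm identity coming from $(\ol g_+)_{ab} = \ol\e_{AB}\ol\e_{A'B'}$ gives $|dr|^2_{\ol g_+} = \ol\e^{AB}\ol\e^{A'B'}r_{AA'}r_{BB'} = 2\det(r_{AA'})$. Now $\ol g_+$ is a Riemannian (positive definite) metric on $X$ — it equals $g_+$ conformally rescaled on the dense open set $X\setminus\Sigma$, where $g_+$ is a Riemannian Poincar\'e-Einstein metric, and it extends to a genuine metric by conformal compactness — and $r$ is a defining function of $\Sigma$, so the covector $dr$ is non-zero at $x\in\Sigma$; hence $|dr|^2_{\ol g_+}>0$ and $\det(r_{AA'})\neq0$. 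This gives $\mu_A\neq0$, and the existence of $\xi^A$ and its uniqueness up to scalar multiples follow as in the first paragraph.

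I do not expect a genuine obstacle. The only points needing a little care are (i) recognising that the conformal structure on $X$ is positive definite — which is what forces $dr$ to be non-null, i.e.\ $(r_{AA'})$ invertible, at every point of $\Sigma$ — and (ii) the bookkeeping of the spinor norm identity in the paper's conventions; but since only $\det(r_{AA'})\neq0$ is needed, the precise constant and sign are immaterial.
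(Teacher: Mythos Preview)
Your argument is correct. Both you and the paper reduce the lemma to the non-vanishing of the covector $\mu_A=r_{AA'}\pi^{A'}\in\mathbb{S}^*_x$, after which existence and uniqueness up to scale are immediate from $\dim\mathbb{S}_x=2$. The difference lies in how this non-vanishing is established. The paper argues by contradiction via the $\a$-plane decomposition: if $\mu_A=0$ then $dr$ annihilates $\mathcal{A}_{[\pi]}$, hence by reality also $\overline{\mathcal{A}_{[\pi]}}=\mathcal{A}_{[\sigma(\pi)]}$, and since $\mathbb{C}T_xX=\mathcal{A}_{[\pi]}\oplus\mathcal{A}_{[\sigma(\pi)]}$ this forces $dr_x=0$, contradicting the defining-function hypothesis. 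You instead prove the stronger statement that the matrix $(r_{AA'})$ is invertible, via the spinor norm identity $|dr|^2_{\overline g_+}=\overline\e^{AB}\overline\e^{A'B'}r_{AA'}r_{BB'}=2\det(r_{AA'})$ and the positive definiteness of $\overline g_+$. Your route is more algebraic and slightly more quantitative (it gives invertibility for all $\pi$ at once), while the paper's route stays closer to the geometric picture of $\a$-planes already in play; both use positive definiteness in an essential way, the paper through $\mathcal{A}_{[\pi]}\cap\overline{\mathcal{A}_{[\pi]}}=\{0\}$ and you through $|dr|^2_{\overline g_+}>0$.
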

\begin{proof}
It suffices to show that $r_{AA'}\pi^{A'}\neq 0$ as an element of $\mathbb{S}^*_x$. Suppose that
$r_{AA'}\xi^A\pi^{A'}=0$ for any $\xi^A\in\mathbb{S}_x$. Then we have $dr|_{\mathcal{A}_{[\pi]}}=0$.
Since $dr$ is real, it follows that 
\[
dr|_{\mathbb{C}T_xX}=dr|_{{\mathcal{A}_{[\pi]}\oplus{\mathcal{A}_{[\sigma(\pi)]}}}}=0,
\]
which contradicts the fact that $r$ is a defining function.
\end{proof}
Note that $\xi^A$ in Lemma \ref{def-xi} can be taken smooth in $x$ and holomorphic in $\pi^{A'}$. Also, $[\xi]\in\mathbb{P}(\mathbb{S})|_\Sigma$ is independent of the choice of $r$. The equation \eqref{tangential} implies that the complex null vector 
\[
\zeta:=\xi^A\pi^{A'}\frac{\partial}{\partial x^a}
\]
is tangent to $\Sigma$, so we obtain a diffeomorphism
\[
f\colon \mathbb{P}(\mathbb{S}')|_\Sigma\longrightarrow M, \quad [\pi]\longmapsto 
[\zeta]. 
\]
Here we regard $\zeta$ as a complex 1-form on $\Sigma$ by lowering  the index by a metric $h\in[h]$.
We can also (locally) define a map
\[
\wt f\colon \wt{\mathcal{G}}|_\Sigma\longrightarrow \mathbb{C}T\Sigma, \quad \pi\longmapsto \zeta=\xi^A\pi^{A'}\frac{\partial}{\partial x^a},
\]
which descends to $f$ under the identification $\mathbb{C}T\Sigma\cong\mathbb{C}T^*\Sigma$ by a fixed metric $h\in[h]$.
\begin{theorem}\label{CR-isom}
The map $f\colon \mathbb{P}(\mathbb{S}')|_\Sigma\to M$ is a CR isomorphism.
\end{theorem}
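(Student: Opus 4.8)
The plan is to show that $f$ is a diffeomorphism which maps $T^{1,0}(\mathbb{P}(\mathbb{S}')|_\Sigma)$ onto $T^{1,0}M$, working fiberwise over $\Sigma$ and using a convenient representative metric. Concretely, I would fix a point $x\in\Sigma$ and choose the special defining function $r$ associated with a representative metric $h\in[h]$, so that by Proposition~\ref{totally} the submanifold $(\Sigma, h)$ is totally geodesic in $(X,\overline{g}_+)$; this forces $\overline\nabla_a r_b=0$ along $\Sigma$, which is precisely what makes the horizontal lift of $T\Sigma$ inside $\wt{\mathcal{G}}|_\Sigma$ well-behaved. I would then adapt the spinor frame: the splitting $\mathbb{C}T_xX\cong\mathbb{S}_x\otimes\mathbb{S}'_x$ restricted along $\Sigma$ decomposes $\mathbb{C}T_x\Sigma$ as the kernel of $r_{AA'}$, and for each $[\pi]$ the $\alpha$-plane $\mathcal{A}_{[\pi]}$ meets $\mathbb{C}T_x\Sigma$ in the line $\mathbb{C}\,\zeta = \mathbb{C}\,\xi^A\pi^{A'}$ given by Lemma~\ref{def-xi}. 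Since $[\xi]$ depends holomorphically on $[\pi]$ (as noted right after the lemma) and $\Sigma$ is 3-dimensional so that $\mathbb{P}(\mathbb{C}T^*\Sigma)$-fibers and $\mathbb{P}(\mathbb{S}')_x$-fibers are both $\mathbb{P}^1$, the fiberwise map $[\pi]\mapsto[\zeta]$ is a biholomorphism of $\mathbb{P}^1$'s; combined with the identity on $\Sigma$, this shows $f$ is a diffeomorphism.

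The substantive step is matching the CR structures. On the twistor CR side, Proposition~\ref{twistor-CR-local} gives $T^{0,1}M$ as the image under $\mathring\pi_*$ of $D$, spanned (modulo the Euler direction $\overline\zeta_i\partial/\partial\overline\zeta_i$) by the horizontal lift $c\,\zeta^i X_i$ of the null direction and by the antiholomorphic fiber directions $w_i\,\partial/\partial\overline\zeta_i$ with $\overline\zeta_iw^i=0$ — that is, $T^{0,1}M$ over a point of $M$ is spanned by the $h$-horizontal lift of (the real part of) the null line $\overline{\mathcal{A}}$ and the tangent space to the $\mathbb{P}^1$-fiber of $M\to\Sigma$. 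On the spinor side, $T^{0,1}(\mathbb{P}(\mathbb{S}')|_\Sigma)$ is obtained by restricting ${}^{\mathbb{I}_r}T^{0,1}\wt{\mathcal{G}}=\wt{\mathcal{A}}_{[\pi]}\oplus T^{0,1}(\text{fiber})$ from \eqref{T01-I} to the real hypersurface $\wt{\mathcal{G}}|_\Sigma$ and pushing down to $\mathbb{P}(\mathbb{S}')$: the vertical part maps to $T^{0,1}$ of the $\mathbb{P}^1$-fiber, which $f$ identifies with the fiber $T^{0,1}$ on the $M$ side by the fiberwise biholomorphism above; the horizontal part $\wt{\mathcal{A}}_{[\pi]}$ restricted to $T(\wt{\mathcal{G}}|_\Sigma)$ is the $\overline\nabla$-horizontal lift of $\mathcal{A}_{[\pi]}\cap\mathbb{C}T_x\Sigma=\mathbb{C}\zeta$. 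So the remaining task is to check that $f_*$ sends this horizontal line to the $h$-horizontal lift $\mathbb{C}\,\zeta^iX_i$ appearing in Proposition~\ref{twistor-CR-local}, i.e. that the two notions of horizontality agree along $\Sigma$. This is where the total-geodesy of $\Sigma$ in $(X,\overline g_+)$ enters: the induced connection on $T\Sigma$ from $\overline\nabla$ is the Levi-Civita connection of $h=\overline g_+|_{T\Sigma}$, hence the connection on $\mathbb{S}'|_\Sigma$ relevant for restricting $\overline\nabla$-horizontal vectors to $T(\wt{\mathcal{G}}|_\Sigma)$ induces the Levi-Civita connection of $h$ on the relevant quotient, so the $\overline\nabla$-horizontal lift of $\zeta$ projects under $\wt f$ (hence $f$) exactly to the $h$-horizontal lift $\zeta^iX_i$ of $\mathring\pi_*$. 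Conjugating, $T^{1,0}$ is preserved as well, and non-degeneracy of the Levi form on both sides is automatic from the constructions, so $f$ is a CR isomorphism.

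I expect the main obstacle to be the bookkeeping in this last identification: one must carefully relate the restriction of the spinor connection $\overline\nabla$ on $\mathbb{S}'\to X$ to a connection along $\Sigma$, observe that the tangential part of $\mathbb{C}T X|_\Sigma$ decomposes compatibly with the spinor bundles, and confirm that the Gauss equation with vanishing second fundamental form makes the induced object agree with the intrinsic Levi-Civita connection of $h$ used to define $X_i$. A clean way to organize this is to work in $\overline g_+$-normal coordinates along $\Sigma$ adapted so that $\Gamma_{cA'}{}^{B'}$ vanishes at the base point $x$ and, thanks to total-geodesy, the mixed Christoffel symbols $\overline\Gamma_{ij}{}^{\perp}$ vanish along $\Sigma$; then both horizontal lifts reduce to $\partial/\partial x^i$ at $x$ up to the same correction terms, and the identification becomes a direct comparison of the two fiber coordinates $[\pi^{A'}]$ and $[\zeta_i]$ via the tensor $\xi^A\gamma^{AA'}_i$. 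Once set up this way the verification is routine, but the conceptual point carrying the proof is exactly Proposition~\ref{totally}.
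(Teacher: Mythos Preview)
Your plan matches the paper's proof closely: both choose the special defining function so that $\Sigma$ is totally geodesic (Proposition~\ref{totally}), work at a point $x_0$ in normal coordinates for $h$ with vanishing spinor connection form, and split the $(0,1)$-space into a vertical and a horizontal piece. The vertical piece is handled exactly as you describe, via the holomorphic dependence of $\xi$ on $\pi$.

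The one place your description is looser than the paper's argument is the horizontal step. It is not quite true that the $\overline\nabla$-horizontal lift of $\zeta$ is sent by $\wt f_*$ to the $h$-horizontal lift $\zeta^i X_i$: computing $\wt f_*\bigl(\xi^A\pi^{A'}\partial/\partial x^a\bigr)$ at $x_0$ produces extra terms of the form $\zeta^i(\partial\xi^B/\partial x^i)\pi^{B'}\,\partial/\partial\zeta^b$ and its conjugate, coming from the base-point dependence of $\xi$. Your heuristic that the induced connection on $T\Sigma$ agrees with the Levi-Civita connection of $h$ is not by itself enough to control these, because $\wt f$ is not a connection-intertwining bundle map---it involves the auxiliary spinor $\xi$. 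The paper handles this by differentiating the defining relation $r_{AA'}\xi^A\pi^{A'}=0$ along $\zeta$ and invoking total geodesy in the form $\overline\nabla_i r_j\,\zeta^i\zeta^j=0$ at $x_0$; this forces $\zeta^i\partial_i\xi^B$ to be proportional to $\xi^B$ (and similarly for the conjugate), so the extra terms land in the Euler directions $\mathbb{C}\,\zeta^b\partial/\partial\zeta^b\oplus\mathbb{C}\,\overline{\zeta^b}\partial/\partial\overline{\zeta^b}$ and disappear upon projecting to $M$. That proportionality argument is the concrete mechanism your sketch is gesturing at, and once you insert it the rest of your outline goes through verbatim.
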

\begin{proof}
We take the special defining function $r$ associated with the representative metric $h\in[h]$ used in the definition of $\wt f$. 
Then, since $(\Sigma, h)$ is a totally geodesic submanifold of $(X, \overline{g}_+)$ by Proposition \ref{totally}, we can take local coordinates $(x^a)=(x^0=r, x^i)$ around a point $x_0\in\Sigma$ so that $(x^i)$ give normal coordinates for $h$, and the connection form of the Levi-Civita connection of $\overline g_+$ vanishes at $x_0$.
If $(\zeta^a)=(\zeta^0, \zeta^i)$ are the associated fiber coordinates of $\mathbb{C}TX$, then $(\zeta^i)$ give fiber coordinates of $\mathbb{C}T\Sigma$.

To prove the theorem, it suffices to show that 
\begin{align*}
\wt f_*\bigl(({}^{\mathbb{I}_r}T^{0, 1}\wt{\mathcal{G}})|_\Sigma\cap {\rm Ker}\,dr\bigr)
\subset D+\mathbb{C}\zeta^i\frac{\partial}{\partial \zeta^i}+\mathbb{C}\overline{\zeta^i}\frac{\partial}{\partial \overline{\zeta^i}}=D\oplus\mathbb{C}\zeta^i\frac{\partial}{\partial \zeta^i}
\end{align*}
under the isomorphism $\mathbb{C}T\Sigma\cong\mathbb{C}T^*\Sigma$ by $h$. 
Recall that ${}^{\mathbb{I}_r}T_\pi^{0, 1}\wt{\mathcal{G}}$ is defined by \eqref{T01-I} and $D$ is given by \eqref{D}.

We take  local frames for $\mathbb{S}, \mathbb{S}'$ such that $\Gamma_{cA}{}^{B}(x_0)=\Gamma_{cA'}{}^{B'}(x_0)=0$. Let $\pi\in\wt{\mathcal{G}}_{x_0}$. By the definition of $\xi^A$, it holds that
\[
\wt{\mathcal{A}}_{[\pi]}\cap{\rm Ker}\, dr=\mathbb{C}\xi^A\pi^{A'}\frac{\partial}{\partial x^a}.
\]
Noting that $d\gamma^a_{BB'}(x_0)=d\sigma^{B}{}_{\overline C}(x_0)=d\sigma^{B'}{}_{\!\!\overline {C'}}(x_0)=0$, we have
\begin{align*}
\wt f_*\Bigl(\xi^A\pi^{A'}\frac{\partial}{\partial x^a}\Bigr)
&=\xi^A\pi^{A'}\Bigl(\frac{\partial}{\partial x^a}+\frac{\partial \xi^B}{\partial x^a}\pi^{B'}\frac{\partial}{\partial \zeta^b}+\sigma^{B}{}_{\overline C}\sigma^{B'}{}_{\!\!\overline {C'}}\frac{\partial \overline{\xi^C}}{\partial x^a}\overline{\pi^{C'}}\frac{\partial}{\partial \overline{\zeta^b}}\Bigr) \\
&=\zeta^i\frac{\partial}{\partial x^i}+\zeta^i\frac{\partial \xi^B}{\partial x^i}\pi^{B'}\frac{\partial}{\partial \zeta^b}+\sigma^{B}{}_{\overline C}\sigma^{B'}{}_{\!\!\overline {C'}}\zeta^i\frac{\partial \overline{\xi^C}}{\partial x^i}\overline{\pi^{C'}}\frac{\partial}{\partial \overline{\zeta^b}}.
\end{align*}
Now we claim that 
\begin{equation}\label{proportional}
\zeta^i\frac{\partial \xi^B}{\partial x^i}=\lambda \xi^B, \quad 
\sigma^{B}{}_{\overline C}\zeta^i\frac{\partial \overline{\xi^C}}{\partial x^i}=\lambda' \sigma^B{}_{\overline C}\overline{\xi^C}
\end{equation}
for some constants $\lambda, \lambda'\in\mathbb{C}^*$: If we apply $\zeta^i(\partial/\partial x^i)$ to \eqref{tangential}, we have
\[
\zeta^i\frac{\partial r_{AA'}}{\partial x^i}\xi^A\pi^{A'}+r_{AA'}\zeta^i\frac{\partial \xi^A}{\partial x^i}\pi^{A'}=0.
\]
Since the second fundamental form of $\Sigma$ vanishes, we have 
\[
\zeta^i\frac{\partial r_{AA'}}{\partial x^i}\xi^A\pi^{A'}
=\overline\nabla_i r_j \zeta^i\zeta^j=0
\]
at $x_0$, and hence 
\[
r_{AA'}\zeta^i\frac{\partial \xi^A}{\partial x^i}\pi^{A'}=0.
\]
Thus, by the uniqueness of $\xi^A$ up to scalar multiplications, we 
obtain the first equation of \eqref{proportional}. The second equation can be derived in a similar way by the differentiation of the equation
\[
r_{BB'}\sigma^{B}{}_{\overline C}\sigma^{B'}{}_{\!\!\overline {C'}}\overline{\xi^{C}}\overline{\pi^{C'}}=0,
\]
which is the complex conjugate of \eqref{tangential}. Thus, we have
\[
\wt f_*\Bigl(\xi^A\pi^{A'}\frac{\partial}{\partial x^a}\Bigr)=
\zeta^i\frac{\partial}{\partial x^i}+\lambda\zeta^i\frac{\partial}{\partial \zeta^i}+\lambda'\overline{\zeta^i}\frac{\partial}{\partial \overline{\zeta^i}}\in D\oplus\mathbb{C}\zeta^i\frac{\partial}{\partial \zeta^i}.
\]

Next we consider $\wt f_*\bigl(T^{0, 1}(\rm fiber)\bigr)$. Recalling that $\xi^A$ is holomorphic in $\pi^{A'}$, we have
\begin{align*}
\wt f_*\frac{\partial}{\partial \pi^{A'}}
&=\frac{\partial(\gamma^a_{BB'}\xi^B\pi^{B'})}{\partial \pi^{A'}}\frac{\partial}{\partial \zeta^a} \\
&=\Bigl(\gamma^a_{BB'}\frac{\partial \xi^B}{\partial \pi^{A'}}\pi^{B'}+\gamma^a_{BA'}\xi^B\Bigr)\frac{\partial}{\partial \zeta^a} \\
&=: w^i\frac{\partial}{\partial \zeta^i}.
\end{align*}
Since $\zeta_i w^i=0$, we obtain that 
\[
\wt f_*\frac{\partial}{\partial \overline{\pi^{A'}}}\in D.
\]
Thus we complete the proof.
\end{proof} 

In the proof above, we only use the fact that $(\Sigma, [h])$ is the conformal infinity of 
an anti self-dual Einstein metric, and we do not need its real analyticity. However, it is proved by LeBrun \cite{LeB2} that a twistor CR manifold can be  embeddable to a complex three-manifold if and only if the base conformal manifold is real analytic.
Thus we obtain the following
\begin{theorem}\label{analyticity}
If a $3$-dimensional $C^\infty$ conformal manifold $(\Sigma, [h])$ is the two-sided conformal infinity of an anti self-dual Einstein metric, then it is real analytic.
\end{theorem}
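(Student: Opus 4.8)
The plan is to run the construction of \S\ref{embedding} essentially verbatim, observing that it requires only the hypothesis at hand rather than real analyticity, and then to invoke LeBrun's non-embeddability theorem to force real analyticity as a conclusion. Suppose $(\Sigma,[h])$ is the both-sided conformal infinity of an anti self-dual Einstein metric $g_+$ on $X\setminus\Sigma$, where $\Sigma\subset X$ is a hypersurface and $\overline{g}_+=r^2g_+$ extends to $X$. Since both the desired conclusion and LeBrun's non-embeddability result are local on $\Sigma$, I would work in a neighborhood of an arbitrary point of $\Sigma$, where a spin structure for a representative of $[\overline{g}_+]$ exists, so that the conformal spinor bundles $\mathbb{S},\mathbb{S}'$ over $(X,[\overline{g}_+])$ and $\wt{\mathcal{G}}=\mathbb{S}'\setminus\{\bo\}$ are defined there.

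First I would note that, since $g_+$ is anti self-dual Einstein, Theorem \ref{I-integrable} gives the integrability of $\mathbb{I}_1$ on $\wt{\mathcal{G}}|_{X\setminus\Sigma}$, and Proposition \ref{dilation} then yields the integrability of $\mathbb{I}_r$ on all of $\wt{\mathcal{G}}$ for any defining function $r$ of $\Sigma$. Consequently $\mathbb{P}(\mathbb{S}')$ inherits a complex structure, making it a complex $3$-manifold, and $\mathbb{P}(\mathbb{S}')|_\Sigma$ is an embedded real hypersurface in it, hence carries an induced CR structure. By Theorem \ref{CR-isom}, the canonical map $f\colon\mathbb{P}(\mathbb{S}')|_\Sigma\to M$ onto the twistor CR manifold associated to $(\Sigma,[h])$ is a CR isomorphism; the point I would stress is that the proof of Theorem \ref{CR-isom} uses only the existence of the anti self-dual Poincar\'e--Einstein filling together with the special defining function and the totally geodesic boundary (Proposition \ref{totally}), all of which are available under the present hypothesis, and in particular it does \emph{not} invoke Theorem \ref{LeBrun}, whose real-analyticity assumption is precisely what we wish to dispense with. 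Thus the twistor CR manifold $M$ admits a local CR embedding into the complex manifold $\mathbb{P}(\mathbb{S}')$.

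Finally I would apply LeBrun's theorem \cite{LeB2} that a twistor CR manifold admits a local CR embedding into a complex manifold only when its base conformal $3$-manifold is real analytic. Combined with the embedding just produced, this shows $(\Sigma,[h])$ is real analytic in a neighborhood of the chosen point, and since the point was arbitrary, $(\Sigma,[h])$ is real analytic. I expect the only genuinely delicate point to be the bookkeeping needed to rule out circularity: one must verify that every ingredient of \S\ref{embedding} and of Theorem \ref{CR-isom} goes through from the bare assumption that $(\Sigma,[h])$ bounds an anti self-dual Einstein metric, without implicitly re-importing real analyticity through Theorem \ref{LeBrun}. Granting that, the argument is a short chain of the results established above together with the cited rigidity theorem.
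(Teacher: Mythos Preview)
Your proposal is correct and follows precisely the paper's own argument: the paper observes that the construction of \S\ref{embedding} and the proof of Theorem \ref{CR-isom} use only the existence of the anti self-dual Einstein filling, not real analyticity, and then invokes LeBrun's non-embeddability theorem \cite{LeB2} to conclude. Your added remarks about working locally to secure a spin structure and about avoiding circularity through Theorem \ref{LeBrun} are appropriate elaborations of exactly this reasoning.
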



\subsection{The hyperk\"ahler ambient metrics on $\wt{\mathcal{G}}$}
We consider the holomorphic 1-form 
\[
\wt\tau_1:=\pi_{B'}\d\pi^{B'}=\e_{A'B'}\pi^{A'}\d\pi^{B'}
\]
on $(\wt{\mathcal{G}}|_{X\setminus\Sigma}, \mathbb{I}_{1})$ as in \S\ref{hyper}. 
By pulling back $\wt\tau_1$ via the dilation $\d_{|r|}\colon \wt{\mathcal{G}}|_{X\setminus\Sigma}\to \wt{\mathcal{G}}|_{X\setminus\Sigma}$ and multiplying it by the signature of $r$, we define
\[
\wt\tau_r:=\frac{r}{|r|}\d_{|r|}^*\wt\tau_1=r\overline{\e}_{A'B'}\pi^{A'}\d\pi^{B'},
\]
which is  holomorphic with respect to $\mathbb{I}_r$. By the conformal transformation formula
\[
\overline{\Gamma}_{eC'}{}^{B'}={\Gamma}_{eC'}{}^{B'}+\d_{E'}{}^{B'}r^{-1}r_{EC'},
\]
which follows from \eqref{spinor-conf}, we have
\begin{equation}\label{tau}
\wt\tau_r=r\overline{\e}_{A'B'}\pi^{A'}(d\pi^{B'}+\overline{\Gamma}_{eC'}{}^{B'}\pi^{C'}\theta^e)
-\overline{\e}_{A'E'}r_{EC'}\pi^{A'}\pi^{C'}\theta^e.
\end{equation}
Thus, $\wt\tau_r$ extends to a holomorphic 1-form on $(\wt{\mathcal{G}}, \mathbb{I}_r)$.
We also define 
\[
\wt\omega_r:=d\wt\tau_r
\]
and 
\begin{equation}\label{omega-J-r}
\wt\omega_{\mathbb{J}_r}(V, W):=\frac{-i}{2}\bigl(\wt\omega_r(V, \mathbb{J}_r W)-\wt\omega_r(W, \mathbb{J}_r V)\bigr)
\end{equation}
on $\wt{\mathcal{G}}$. Similarly, we have $\wt\omega_1$ and $\wt\omega_{\mathbb{J}_1}$ on $\wt{\mathcal{G}}|_{X\setminus\Sigma}$. Note that for defining functions $r, \wh r=e^\U r$, these forms are related to each other by
\[
\d_{e^\U}^*\wt\omega_{\mathbb{J}_r}=\wt\omega_{\mathbb{J}_{\wh r}}, \quad 
\frac{r}{|r|}\d_{|r|}^*\wt\omega_{\mathbb{J}_1}=\wt\omega_{\mathbb{J}_r}.
\]
We define a function $\wt r$ on $\wt{\mathcal{G}}$ by
\[
\wt r:=\|\pi\|^2_r\, r,
\]
where $\|\pi\|^2_r$ is given by \eqref{norm-r}.
Then, $\wt r$ is a defining function of $\wt{\mathcal{G}}|_{\Sigma}\subset\wt{\mathcal{G}}$ which is homogeneous of degree $(1, 1)$. On $\wt{\mathcal{G}}|_{X\setminus\Sigma}$, we have
\[
\frac{r}{|r|}\d_{|r|}^* (\|\cdot\|^2) =\wt r,
\]
where 
$\|\pi\|^2=\sigma^{C'}{}_{\!\!\overline{B'}}{\e}_{C'A'}\pi^{A'}\overline{\pi^{B'}}=\sigma_{A'\overline{B'}}\pi^{A'}\overline{\pi^{B'}}$. Since $g_+$ is anti self-dual Einstein, we have
\[
i\partial\overline{\partial}\|\pi\|^2=\wt\omega_{\mathbb{J}_1}
\]
on $(\wt{\mathcal{G}}|_{X\setminus\Sigma}, \mathbb{I}_1)$ by Theorem \ref{thm-potential}. Pulling back this equation by  the dilation
\[
\d_{|r|}\colon (\wt{\mathcal{G}}|_{X\setminus\Sigma}, \mathbb{I}_r)\longrightarrow (\wt{\mathcal{G}}|_{X\setminus\Sigma}, \mathbb{I}_1)
\]
and multiplying it by $r/|r|$, we obtain
\begin{equation*}\label{potential-r}
i\partial\overline{\partial}\,\wt r=\wt\omega_{\mathbb{J}_r}
\end{equation*}
on  $(\wt{\mathcal{G}}|_{X\setminus\Sigma}, \mathbb{I}_r)$ and hence on $(\wt{\mathcal{G}}, \mathbb{I}_r)$.
As the Levi form of $M$ has the Lorentzian signature, $i\partial\overline{\partial}\,\wt r$ is the K\"ahler form of a neutral K\"ahler metric $\wt g[r]$ on $\wt{\mathcal{G}}$ (in a neighborhood of $\wt{\mathcal{G}}|_{\Sigma}$). Moreover, since $\wt\omega_{\mathbb{J}_1}$ is hyperk\"ahler by Theorem \ref{J-parallel}, so is $\wt g[r]$. In particular, it is Ricci-flat and gives an ambient metric associated with the twistor CR manifold $M\cong\mathbb{P}(\mathbb{S}')|_\Sigma$. Thus we obtain Theorem \ref{main-theorem}.

We note that by Proposition \ref{canonical-bundle}, the $3$-form $\wt\tau_r\wedge d\wt\tau_r$ defines an isomorphism 
\begin{equation*}
\wt{\mathcal{G}}\cong K_{\mathbb{P}(\mathbb{S}')}^{\frac{1}{4}}\setminus\{\bo\}
\end{equation*}
as $\mathbb{C}^*$-bundles over $\mathbb{P}(\mathbb{S}')$.


\subsection{The smooth Cheng--Yau metric on $\mathbb{P}(\mathbb{S}')|_{X\setminus\Sigma}$}

Since the K\"ahler potential $\wt r$ solves the complex Monge--Amp\`ere equation, we 
have the smooth Cheng--Yau metric $g_{\rm CY}$ with the K\"ahler form
\[
\omega_{\rm CY}:=-i\partial\overline\partial\log|\wt r|,
\]
which is a Lorentzian K\"ahler-Einstein metric on 
$\mathbb{P}(\mathbb{S}')|_{X\setminus\Sigma}$.
Note that since the complex manifolds $(\wt{\mathcal{G}}, \mathbb{I}_r)$ are biholomorphic to each other by dilations, the induced complex structures on $\mathbb{P}(\mathbb{S}')$ are all the same, and on $\mathbb{P}(\mathbb{S}')|_{X\setminus\Sigma}$  the complex structure is also induced by $\mathbb{I}_1$. Moreover, the K\"ahler potentials $\wt r$ are related by dilations, so the smooth Cheng--Yau metric 
does not depend on the choice of a defining function $r$, and we can also use $\|\pi\|^2$ in place of $|\wt r|$:
\[
\omega_{\rm CY}=-i\partial\overline\partial\log\|\pi\|^2.
\] 
Thus, from Theorem \ref{Kahler-Einstein}, we obtain the following description of the smooth Cheng--Yau metric:  
\begin{theorem}\label{cheng-yau}
Let $H\subset T\mathbb{P}(\mathbb{S}')|_{X\setminus\Sigma}$ be the distribution obtained by the horizontal lift of $T(X\setminus\Sigma)$ with respect to the connection $\nabla$ determined by $g_+$. Then, in terms of the decomposition
\[
T\mathbb{P}(\mathbb{S}')|_{X\setminus\Sigma}=H\oplus T({\rm fiber}),
\]
the smooth Cheng--Yau metric is given by
\[
g_{\rm CY}=(-\Lambda g_+)\oplus(-g_{\rm FS}),
\]
where $g_{\rm FS}$ denotes the Fubini--Study metric on each fiber $\mathbb{P}(\mathbb{S}'_x)\cong \mathbb{P}^1$.
\end{theorem}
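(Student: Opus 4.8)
The plan is to deduce this directly from Theorem \ref{Kahler-Einstein}, applied to the anti self-dual Einstein metric $g_+$ on the open manifold $X\setminus\Sigma$. Since $g_+$ is Poincar\'e--Einstein, its scalar curvature is a nonzero (in fact negative) constant, so $\Lambda=R/24\neq0$ and all the constructions of \S\ref{hyperkahler-spinor}--\S\ref{MA-Kahler-Einstein} apply on $X\setminus\Sigma$: there $\mathbb{S}'|_{X\setminus\Sigma}$, equipped with the skew forms $\e_{A'B'}$, the bundle map $\sigma$, and the connection $\nabla$ coming from $g_+$, is precisely the spinor bundle of the anti self-dual Einstein manifold $(X\setminus\Sigma, g_+)$. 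Moreover, the complex structure on $\wt{\mathcal G}|_{X\setminus\Sigma}$ entering Theorem \ref{Kahler-Einstein} is $\mathbb{I}_1$, the horizontal distribution $H\subset T\mathbb{P}(\mathbb{S}')|_{X\setminus\Sigma}$ is the descent of the $\nabla$-horizontal distribution, and $\|\pi\|^2=\sigma_{A'\overline{B'}}\pi^{A'}\overline{\pi^{B'}}$ is formed with the $g_+$-skew forms; these are exactly the data appearing in the statement of Theorem \ref{cheng-yau} and in the definition of $\omega_{\rm CY}$ recorded just above it.

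With these identifications, Theorem \ref{Kahler-Einstein} applied to $(X\setminus\Sigma, g_+)$ says that $\omega_{\rm KE}:=i\partial\overline{\partial}\log\|\pi\|^2$ descends to $\mathbb{P}(\mathbb{S}')|_{X\setminus\Sigma}$ as the K\"ahler form of the K\"ahler--Einstein metric
\[
g_{\rm KE}=(\Lambda g_+)\oplus g_{\rm FS}
\]
relative to $T\mathbb{P}(\mathbb{S}')|_{X\setminus\Sigma}=H\oplus T(\mathrm{fiber})$, with $H$ and $T(\mathrm{fiber})$ being $g_{\rm KE}$-orthogonal complex subbundles. On the other hand, as noted above the statement of Theorem \ref{cheng-yau}, on $\mathbb{P}(\mathbb{S}')|_{X\setminus\Sigma}$ we have $\omega_{\rm CY}=-i\partial\overline{\partial}\log\|\pi\|^2=-\omega_{\rm KE}$. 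By the sign convention relating K\"ahler forms to K\"ahler metrics ($2g_{\a\overline{\b}}\theta^{\a}\cdot\overline{\theta^{\b}}\leftrightarrow ig_{\a\overline{\b}}\theta^{\a}\wedge\overline{\theta^{\b}}$), the K\"ahler metric with K\"ahler form $-\omega_{\rm KE}$ is $-g_{\rm KE}$. Hence
\[
g_{\rm CY}=-g_{\rm KE}=(-\Lambda g_+)\oplus(-g_{\rm FS}),
\]
which is the asserted formula; the orthogonality of the two summands and the fact that $H$ is the horizontal lift of $T(X\setminus\Sigma)$ with respect to $\nabla$ are inherited from Theorem \ref{Kahler-Einstein}. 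Since $\Lambda<0$, the summand $-\Lambda g_+$ is positive definite and $-g_{\rm FS}$ negative definite, consistent with $g_{\rm CY}$ being Lorentzian in the sense used in \S\ref{ambient-conf-CR}.

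In short, the proof is entirely a reduction to the general computation already carried out in Theorem \ref{Kahler-Einstein}, together with the elementary observation that negating a K\"ahler form negates the metric. There is no genuine obstacle here; the only point requiring care is the bookkeeping in the first paragraph, namely checking that the complex structure $\mathbb{I}_1$, the $\nabla$-horizontal distribution, and the norm $\|\pi\|^2$ used in the Cheng--Yau construction coincide with the objects in Theorem \ref{Kahler-Einstein} — all of which was already set up in \S\ref{embedding} and in the paragraphs introducing $\omega_{\rm CY}$.
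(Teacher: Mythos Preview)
Your proof is correct and follows essentially the same approach as the paper: the paper also derives Theorem \ref{cheng-yau} directly from Theorem \ref{Kahler-Einstein} applied to $(X\setminus\Sigma, g_+)$, after observing that $\omega_{\rm CY}=-i\partial\overline\partial\log\|\pi\|^2=-\omega_{\rm KE}$ on $\mathbb{P}(\mathbb{S}')|_{X\setminus\Sigma}$. Your write-up is in fact more explicit than the paper's about the bookkeeping (identifying $\mathbb{I}_1$, $\nabla$, and $\|\pi\|^2$ with the data of \S\ref{hyperkahler-spinor}--\S\ref{MA-Kahler-Einstein}), but the argument is the same.
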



\section{The flat case}\label{flat-case}
\subsection{Description of the flat twistor CR manifold}
We consider the case when $(\Sigma, [h])$ is the flat conformal three-manifold 
$(\mathbb{R}^3, [(dx^1)^2+(dx^2)^2+(dx^3)^2])$, which is the conformal infinity of the hyperbolic metric
\[
g_+=\frac{(dx^0)^2+(dx^1)^2+(dx^2)^2+(dx^3)^2}{(x^0)^2}
\]
with $\Lambda=-1/2$ on $X\setminus\Sigma\ (X:=\mathbb{R}^4)$. We fix the defining function $r=x^0$ and the compactified metric 
$\overline g_+=(dx^0)^2+(dx^1)^2+(dx^2)^2+(dx^3)^2$. If we identify a point $(x^a)\in X$ with the matrix
\[
A(x)=\frac{1}{\sqrt{2}}
\begin{pmatrix}
x^0+ix^3 & x^1+ix^2 \\
-x^1+ix^2 & x^0-ix^3
\end{pmatrix}=:(x^{AA'}),
\]
then we have 
\[
\overline g_+=2\det A(dx)=\overline\e_{AB}\overline\e_{A'B'}dx^{AA'}dx^{BB'}
\]
with 
\[
(\overline\e_{AB})=(\overline\e_{A'B'})=
\begin{pmatrix}
0 & 1 \\
-1 & 0
\end{pmatrix}.
\]
Thus, we have spinor bundles $\mathbb{S}=\mathbb{S}'=X\times\mathbb{C}^2$ with the isomorphism $\mathbb{C}TX\cong\mathbb{S}\otimes \mathbb{S'}$ given by $v^a\leftrightarrow v^{AA'}=A(v^a)$. In the sequel, we raise and lower spinor indices by $\overline\e_{AB}, \overline\e_{A'B'}$ and their inverses. The $\mathbb{C}$-anti-linear bundle map $\sigma\colon 
\mathbb{S}'\to\mathbb{S}'$ is given by
\[
\sigma\begin{pmatrix}
\pi^{0'} \\
\pi^{1'}
\end{pmatrix}
=\begin{pmatrix}
\overline{\pi^{1'}} \\
-\overline{\pi^{0'}}
\end{pmatrix}
=\begin{pmatrix}
0 & 1 \\
-1 & 0
\end{pmatrix}
\begin{pmatrix}
\overline{\pi^{0'}} \\
\overline{\pi^{1'}}
\end{pmatrix}.
\]

The twistor space for the flat metric is well-known in twistor theory (e.g., \cite{PR1}), and  
the complex manifold $(\wt{\mathcal{G}}=\mathbb{S}'\setminus\{\bo\}, \mathbb{I}_r)$ can be described as follows. We define a differmorphism
\[
\wt F\colon \wt{\mathcal{G}}=X\times (\mathbb{C}^2\setminus\{0\})\longrightarrow 
\mathbb{C}^4\setminus\mathbb{C}^2
=\Bigl\{ \Bigl(\begin{pmatrix}
W^0 \\
W^1
\end{pmatrix}, 
\begin{pmatrix}
W^2 \\
W^3
\end{pmatrix}
\Bigr)\ \Big|\   \begin{pmatrix}
W^2 \\
W^3
\end{pmatrix}\neq \begin{pmatrix}
0 \\
0
\end{pmatrix}\Bigr\}
\]
by 
\[
\bigl(x^a, \pi^{A'}\bigr)\longmapsto \bigl(  x^{AA'}\pi_{A'}, \pi_{A'} \bigr)
=\Bigl(A(x)\begin{pmatrix}
-\pi^{1'} \\
\pi^{0'}
\end{pmatrix}, \begin{pmatrix}
-\pi^{1'} \\
\pi^{0'}
\end{pmatrix}\Bigr).
\]
Note that $A(x)$ satisfies 
\begin{equation}\label{A-property}
A(x)\begin{pmatrix}
0 & 1 \\
-1 & 0
\end{pmatrix}
=\begin{pmatrix}
0 & 1 \\
-1 & 0
\end{pmatrix}\overline{A(x)},
\end{equation}
and hence when $\wt F(x^a, \pi^{A'})=(W^\a)$, we have
\[
A(x)\begin{pmatrix}
W^2  & -\overline{W^3} \\
W^3 & \overline{W^2}
\end{pmatrix}
=\begin{pmatrix}
W^0  & -\overline{W^1} \\
W^1 & \overline{W^0}
\end{pmatrix}, 
\]
from which one can construct the inverse of $\wt F$.
\begin{prop}
The map $\wt F\colon (\wt{\mathcal{G}},  \mathbb{I}_r)\to \mathbb{C}^4\setminus\mathbb{C}^2$ is a biholomorphism.
\end{prop}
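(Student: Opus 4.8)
The plan is to show that $\wt F$ is holomorphic with respect to $\mathbb{I}_r$; since it is already a diffeomorphism by the explicit inverse construction sketched before the statement, this suffices to conclude it is a biholomorphism. To check holomorphicity it is enough to verify that $\wt F^*(dW^\a)$ are $(1,0)$-forms on $(\wt{\mathcal{G}},\mathbb{I}_r)$ for $\a=0,1,2,3$, or equivalently that each $\wt F^* dW^\a$ annihilates ${}^{\mathbb{I}_r}T^{0,1}\wt{\mathcal{G}}$. Recall that, for the chosen defining function $r=x^0$ and the flat compactified metric $\overline g_+$, the connection $\overline\nabla$ on $\mathbb{S}'$ is trivial, so ${}^{\mathbb{I}_r}T^{0,1}_\pi\wt{\mathcal{G}}=\wt{\mathcal{A}}_{[\pi]}\oplus T^{0,1}_\pi(\mathrm{fiber})$ where $\wt{\mathcal{A}}_{[\pi]}$ is spanned (with $dr=0$ discarded, but here we work on all of $\wt{\mathcal{G}}$) by the vectors $\xi^A\pi^{A'}\,\partial/\partial x^{AA'}$ for $\xi^A\in\mathbb{C}^2$, i.e. by $\partial/\partial x^{AA'}\cdot\pi_{A'}$ lifted horizontally, and $T^{0,1}(\mathrm{fiber})$ is spanned by $\partial/\partial\overline{\pi^{A'}}$.

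First I would compute $\wt F^*(dW^0,dW^1)=d(x^{AA'}\pi_{A'})=\pi_{A'}\,dx^{AA'}+x^{AA'}\,d\pi_{A'}$ and $\wt F^*(dW^2,dW^3)=d\pi_{A'}$. Since $\Gamma\equiv0$ in these coordinates, the $(1,0)$-coframe for $\mathbb{I}_r$ on $\wt{\mathcal{G}}$ is exactly $\{\d\pi^{A'}=d\pi^{A'},\ \pi_{A'}\theta^{AA'}=\pi_{A'}\,dx^{AA'}\}$ (this is the local frame for $T^*_{1,0}$ recorded in \S\ref{def-I}, now with respect to $\overline\nabla$, which is flat). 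Therefore $d\pi_{A'}$ is manifestly a combination of the $(1,0)$-forms $d\pi^{A'}$, and $\pi_{A'}\,dx^{AA'}$ is one of the $(1,0)$-coframe elements; the mixed term $x^{AA'}d\pi_{A'}$ is a combination of the $d\pi^{A'}$'s with function coefficients. Hence every $\wt F^*dW^\a$ is a $(1,0)$-form, so $\wt F$ is holomorphic; being a diffeomorphism, it is a biholomorphism.

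The main technical point — and the place where one must be careful rather than where there is a real obstacle — is bookkeeping the index conventions: in \S\ref{flat-case} one raises and lowers with the compactified $\overline\e_{AB},\overline\e_{A'B'}$, the map $\wt F$ uses $\pi_{A'}=(-\pi^{1'},\pi^{0'})$ rather than $\pi^{A'}$, and one should confirm that the span of $\{\pi_{A'}\,dx^{AA'}\}_A$ (two forms, $A=0,1$) together with $\{d\pi^{A'}\}_{A'}$ (two forms) does indeed give the full rank-$4$ space $T^*_{1,0}\wt{\mathcal{G}}$, i.e. that $\pi\neq0$ guarantees $\pi_{A'}\,dx^{AA'}$ has rank $2$ in $A$. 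This follows because the map $v^a\mapsto v^{AA'}\pi_{A'}$ from $T_xX$ to $\mathbb{C}^2$ is surjective when $\pi\neq0$ (its kernel is precisely the $\a$-plane $\mathcal{A}_{[\sigma(\pi)]}$, a $2$-dimensional subspace), so the two covectors $\pi_{A'}\,dx^{0A'}$ and $\pi_{A'}\,dx^{1A'}$ are linearly independent. I would also remark that one may alternatively invoke Proposition \ref{dilation}: it suffices to treat $r=x^0$ as done above, since all $\mathbb{I}_r$ are carried to one another by dilations, though here the direct verification with $r=x^0$ is already immediate.
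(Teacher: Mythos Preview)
Your proof is correct and is essentially the dual of the paper's argument: the paper pushes forward $(0,1)$-vectors $v=\xi^A\pi^{A'}\partial/\partial x^a\in\wt{\mathcal{A}}_{[\pi]}$ and checks $dW^\a(\wt F_*v)=0$ (using $v^{AA'}\pi_{A'}=\xi^A\pi^{A'}\pi_{A'}=0$), while you pull back $dW^\a$ and check it lies in the span of the $(1,0)$-coframe $\{d\pi^{A'},\ \pi_{A'}\,dx^{AA'}\}$. The underlying computation is the same identity $\pi^{A'}\pi_{A'}=0$, so the two arguments are equivalent; your remarks on rank and on Proposition~\ref{dilation} are correct but not needed for the conclusion.
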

\begin{proof}
We will show that $\wt F_*(T^{0, 1}\wt{\mathcal{G}})\subset T^{0, 1}(\mathbb{C}^4\setminus\mathbb{C}^2)$. Since the restriction of $\wt F$ to each fiber is holomorphic, it suffices to consider $\wt F_*(\wt{\mathcal{A}}_{[\pi]})$. Any $v\in \wt{\mathcal{A}}_{[\pi]}$ can be written as $v=\xi^A\pi^{A'}(\partial/\partial x^a)$ since $\overline g_+$ is a flat metric. Thus, we have $dW^2(\wt F_*v)=dW^3(\wt F_*v)=0$ and 
\[
\begin{pmatrix}
dW^0(\wt F_* v) \\
dW^1(\wt F_* v)
\end{pmatrix}
=\bigl(v^{AA'}\pi_{A'}\bigr)=0,
\] 
which imply $\wt F_*v\in T^{0, 1}(\mathbb{C}^4\setminus\mathbb{C}^2)$.
\end{proof}
Since $\wt F$ is $\mathbb{C}^*$-equivariant, it descends to a biholomorphism
\[
F\colon \mathbb{P}(\mathbb{S}')\longrightarrow \mathbb{P}^3\setminus\mathbb{P}^1.
\]
The submanifold $\Sigma\subset X$ is characterized by the condition ${\rm tr} A(x)=0$, so $F$ maps the twistor CR manifold $M\cong \mathbb{P}(\mathbb{S}')|_{\Sigma}$ to the real hypersurface
\[
\mathcal{N}=\bigl\{ [(W^\a)]\in \mathbb{P}^3\setminus\mathbb{P}^1\ \big|\ 
W^0\overline{W^2}+W^2\overline{W^0}+W^1\overline{W^3}+W^3\overline{W^1}=0\bigr\}.
\]

\subsection{Computation of $\mathbb{J}_r$}
The second complex structure $\mathbb{J}_r$ on $\wt{\mathcal{G}}$ defines a complex structure on $\mathbb{C}^4\setminus\mathbb{C}^2$ via $\wt F$. We will denote it by the same symbol $\mathbb{J}_r$ and compute it in terms of the coordinates $(W^\a)$. To simplify the notation, let us denote a real tangent vector $w^\a(\partial/\partial W^\a)+({\rm c.c.})$ on $\mathbb{C}^4\setminus\mathbb{C}^2$ by
\[
(w^0, w^1, w^2, w^3)+({\rm c.c.}),
\] 
where $({\rm c.c.})$ stands for the complex conjugate. 
\begin{prop}\label{J-flat} The complex structure $\mathbb{J}_r$ on $\mathbb{C}^4\setminus\mathbb{C}^2$ is given by
\[
\mathbb{J}_r\bigl((w^0, w^1, w^2, w^3)+({\rm c.c.})\bigr)
=(\overline{w^1}, -\overline{w^0}, \overline{w^3}, -\overline{w^2})+({\rm c.c.}).
\]
\end{prop}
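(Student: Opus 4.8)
The plan is to verify the asserted formula on the two summands of the decomposition $T\wt{\mathcal{G}}=\wt{TX}\oplus T(\mathrm{fiber})$, on which $\mathbb{J}_r=J_\pi\oplus\sigma$ by definition, exploiting the fact that the compactified metric $\overline g_+$ is flat, so that its Levi--Civita connection vanishes and the horizontal lift of $\partial/\partial x^a$ is simply $\partial/\partial x^a$, with no connection terms to carry. First I would record $\wt F_*$ on each summand. Since $W^2=\pi_{0'}$ and $W^3=\pi_{1'}$ are functions of the fibre variable alone, their differentials kill horizontal vectors; differentiating $W^A=x^{AB'}\pi_{B'}$ ($A=0,1$) in the base directions then gives $\wt F_*(v^a\partial/\partial x^a)=v^{AB'}\pi_{B'}\,\partial/\partial W^A+(\mathrm{c.c.})$, so a real horizontal vector $v$ has image with components $(w^0,w^1,w^2,w^3)=(v^{0B'}\pi_{B'},\,v^{1B'}\pi_{B'},\,0,\,0)$. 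For a real vertical vector $\eta^{A'}\partial/\partial\pi^{A'}+(\mathrm{c.c.})$ one finds $(w^2,w^3)=(\eta_{0'},\eta_{1'})$ and ${}^t(w^0,w^1)=A(x)\,{}^t(w^2,w^3)$, which is just the defining relation of $\wt F$.

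Next I would apply $\mathbb{J}_r$ and push forward on each piece. On the fibre, $\mathbb{J}_r$ acts by the spinor map $\sigma$; using its explicit form on $\mathbb{S}'$ and lowering indices with $\overline\e_{A'B'}$ gives $(\sigma\pi)_{0'}=\overline{\pi^{0'}}$ and $(\sigma\pi)_{1'}=\overline{\pi^{1'}}$, whence the $(W^2,W^3)$-block transforms by $(w^2,w^3)\mapsto(\overline{w^3},-\overline{w^2})$; correspondingly ${}^t(w^0,w^1)$ becomes $A(x)\,{}^t(\overline{w^3},-\overline{w^2})=A(x)(\overline\e_{A'B'})\overline{{}^t(w^2,w^3)}$, and relation \eqref{A-property} rewrites this as $(\overline\e_{AB})\overline{A(x)\,{}^t(w^2,w^3)}=(\overline\e_{AB})\overline{{}^t(w^0,w^1)}={}^t(\overline{w^1},-\overline{w^0})$, precisely the claimed transformation on the fibre directions. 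On the horizontal piece, $\mathbb{J}_r=J_\pi$ replaces $v^{AA'}$ by $v^{AC'}J_{C'}{}^{A'}$, so by the previous paragraph $\wt F_*(\mathbb{J}_r v)$ has $(W^0,W^1)$-components $v^{AC'}J_{C'}{}^{B'}\pi_{B'}$ and vanishing $(W^2,W^3)$-components. A short contraction using $\pi^{B'}\pi_{B'}=0$ and $(\sigma\pi)^{B'}\pi_{B'}=-\|\pi\|^2_r$ (from \eqref{norm-r}) gives $J_{C'}{}^{B'}\pi_{B'}=(\sigma\pi)_{C'}$, so those components equal $v^{AC'}(\sigma\pi)_{C'}$; invoking the reality of $v$ in the form $v^{AA'}=\sigma^A{}_{\overline B}\sigma^{A'}{}_{\overline{B'}}\overline{v^{BB'}}$ together with $(\sigma\pi)_{C'}=\overline{\pi^{C'}}$ then yields $v^{0C'}(\sigma\pi)_{C'}=\overline{v^{1C'}\pi_{C'}}=\overline{w^1}$ and $v^{1C'}(\sigma\pi)_{C'}=-\overline{v^{0C'}\pi_{C'}}=-\overline{w^0}$, which matches the claimed formula (its $W^2,W^3$ entries vanishing here because $w^2=w^3=0$). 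Since $\wt{TX}$ and $T(\mathrm{fiber})$ span $T_{(x,\pi)}\wt{\mathcal{G}}$ pointwise and $\mathbb{J}_r$ is $\mathbb{R}$-linear, the two cases give the proposition.

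There is no conceptual obstacle; the delicate point is the simultaneous bookkeeping of three conjugation-type operations — the $\mathbb{C}$-antilinear spinor map $\sigma$, the ordinary complex conjugation of the coordinate functions $W^\alpha$ (which is what ``$(\mathrm{c.c.})$'' means in the statement), and the reality constraint relating the two on the components of a real vector. The two identities that make everything collapse are $(\sigma\pi)_{A'}=\overline{\pi^{A'}}$, valid in the chosen trivialization, and the single use of \eqref{A-property} linking the $(W^0,W^1)$-block to the $(W^2,W^3)$-block; checking those carefully is the crux.
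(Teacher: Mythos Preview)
Your proof is correct and follows essentially the same route as the paper's: split into the vertical and horizontal summands of $T\wt{\mathcal{G}}=\wt{TX}\oplus T(\mathrm{fiber})$, apply $\sigma$ and $J_\pi$ respectively, and reduce each case to the identity \eqref{A-property}. The only cosmetic difference is that for the horizontal piece the paper invokes \eqref{A-property} directly with $x$ replaced by $v$, whereas you phrase the same reality condition in spinor form as $v^{AA'}=\sigma^A{}_{\overline B}\sigma^{A'}{}_{\overline{B'}}\overline{v^{BB'}}$; these are the same equation, and your explicit verification of $J_{C'}{}^{B'}\pi_{B'}=(\sigma\pi)_{C'}$ just fills in a step the paper leaves implicit.
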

\begin{proof}
Since $\overline g_+$ is flat, the complex structure $\mathbb{J}_r$ on $\wt{\mathcal{G}}$ is given by $J_\pi\oplus\sigma$ in terms of the decomposition $T_\pi \wt{\mathcal{G}}=TX\oplus \mathbb{C}^2$. Thus, for a vertical tangent vector 
\[
(w^0, w^1, w^2, w^3)+({\rm c.c.})=\wt F_*\Bigl(\lambda^{A'}\Bigl(\frac{\partial}{\partial \pi^{A'}}\Bigr)_{(x, \pi)}+({\rm c.c.})\Bigr),
\]
we have
\begin{align*}
\mathbb{J}_r\bigl((w^0, w^1, w^2, w^3)+({\rm c.c.})\bigr)
&=\Bigl(A(x)\begin{pmatrix}
\overline{\lambda^{1'}} \\
-\overline{\lambda^{0'}}
\end{pmatrix}, 
\begin{pmatrix}
\overline{\lambda^{1'}} \\
-\overline{\lambda^{0'}}
\end{pmatrix}
\Bigr)+({\rm c.c.}) \\
&=(\overline{w^1}, -\overline{w^0}, \overline{w^3}, -\overline{w^2})+({\rm c.c.})
\end{align*}
by \eqref{A-property}. For a horizontal tangent vector 
\[
(w^0, w^1, 0, 0)+({\rm c.c.})=\wt F_*\Bigl(v^a\Bigl(\frac{\partial}{\partial x^a}\Bigr)_{(x, \pi)}\Bigr)=\bigl(v^{AA'}\pi_{A'}, 0, 0\bigr)+({\rm c.c.}),
\]
we have 
\begin{align*}
\mathbb{J}_r\bigl((w^0, w^1, 0, 0)+({\rm c.c.})\bigr)
&=\bigl(v^{AC'}J_{C'}{}^{A'}\pi_{A'}, 0, 0\bigr)+({\rm c.c.}) \\
&=\bigl(v^{AC'}(\sigma\pi)_{C'}, 0, 0\bigr)+({\rm c.c.}) \\
&=(\overline{w^1}, -\overline{w^0}, 0, 0)+({\rm c.c.}),
\end{align*}
where we again used \eqref{A-property} with $x$ replaced by $v$. Thus we complete the proof.
\end{proof}


\subsection{Computation of $\wt g[r]$}
First, we will compute the holomorphic 1-form
\[
\wt\tau_r=r\overline{\e}_{A'B'}\pi^{A'}\d\pi^{B'}
\]
on $\wt{\mathcal{G}}\cong \mathbb{C}^4\setminus\mathbb{C}^2$ for $r=x^0$. By \eqref{tau} and 
$\overline{\Gamma}_{eC'}{}^{B'}=0$, we have
\begin{align*}
\wt\tau_r&=r\overline{\e}_{A'B'}\pi^{A'}d\pi^{B'}
-\overline{\e}_{A'E'}r_{EC'}\pi^{A'}\pi^{C'}dx^e \\
&=r\overline{\e}^{A'B'}\pi_{A'}d\pi_{B'}
-r_{EC'}\pi_{E'}\pi^{C'}dx^{EE'}.
\end{align*}
Since $r=x^0=(1/\sqrt{2})(x^{00'}+x^{11'})$, the components $r_{AA'}$ are given by
\[
r_{00'}=r_{11'}=\frac{1}{\sqrt{2}}, \quad r_{01'}=r_{10'}=0.
\]
Thus, we have
\begin{align*}
&\quad \sqrt{2}\bigl(r\overline{\e}^{A'B'}\pi_{A'}d\pi_{B'}-r_{EC'}\pi_{E'}\pi^{C'}dx^{EE'}\bigr) \\
&=
(x^{00'}+x^{11'})(\pi_{0'}d\pi_{1'}-\pi_{1'}d\pi_{0'}) \\
&\quad 
-(\pi^{0'}\pi_{0'}dx^{00'}+\pi^{0'}\pi_{1'}dx^{01'}
+\pi^{1'}\pi_{0'}dx^{10'}+\pi^{1'}\pi_{1'}dx^{11'}) \\
&=(x^{00'}\pi_{0'}+x^{01'}\pi_{1'})d\pi_{1'}-\pi_{1'}d(x^{00'}\pi_{0'}+x^{01'}\pi_{1'})\\
&\quad +\pi_{0'}d(x^{11'}\pi_{1'}+x^{10'}\pi_{0'})-(x^{11'}\pi_{1'}+x^{10'}\pi_{0'})d\pi_{0'}\\
&=W^0dW^3-W^3dW^0+W^2dW^1-W^1dW^2
\end{align*}
and hence
\begin{equation*}
\begin{aligned}
\wt\tau_r&=\frac{1}{\sqrt{2}}(W^0dW^3-W^3dW^0+W^2dW^1-W^1dW^2), \\
\wt\omega_r&=\sqrt{2}(dW^0\wedge dW^3-dW^1\wedge dW^2).
\end{aligned}
\end{equation*}
It follows from this formula of $\wt \omega_r$ and Proposition \ref{J-flat} that the 
K\"ahler form of the ambient metric $\wt g[r]$ is given by 
\begin{equation}\label{omega-J-flat}
\begin{aligned}
\wt\omega_{\mathbb{J}_r}
&=\frac{-i}{2}\cdot\sqrt{2}\bigl((dW^0\circ\mathbb{J}_r)\wedge dW^3+dW^0\wedge
(dW^3\circ\mathbb{J}_r) \\
&\qquad\qquad\qquad -(dW^1\circ\mathbb{J}_r)\wedge dW^2-dW^1\wedge (dW^2\circ\mathbb{J}_r)\bigr) \\
&=\frac{i}{\sqrt{2}}\bigl(dW^0\wedge d\overline{W^2}+
dW^2\wedge d\overline{W^0}+dW^1\wedge d\overline{W^3}+dW^3\wedge d\overline{W^1}\bigr).
\end{aligned}
\end{equation}
Thus, we obtain the following 
\begin{theorem}
The hyperk\"ahler ambient metric $\wt g[r]$ on $\wt{\mathcal{G}}\cong \mathbb{C}^4\setminus\mathbb{C}^2$ associated with the twistor CR manifold $\mathcal{N}\subset\mathbb{P}^3\setminus\mathbb{P}^1$ is given by the flat metric
\[
\wt g[r]=\sqrt{2}(dW^0\cdot d\overline{W^2}+dW^2\cdot d\overline{W^0}+dW^1\cdot d\overline{W^3}+dW^3\cdot d\overline{W^1}).
\]
\end{theorem}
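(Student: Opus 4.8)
The plan is to obtain $\wt g[r]$ directly from its K\"ahler form, which has already been computed in \eqref{omega-J-flat}, by invoking the convention fixed in the \emph{Notations}: a K\"ahler metric written as $2g_{\a\overline\b}\,\theta^\a\cdot\overline{\theta^\b}$ has K\"ahler form $ig_{\a\overline\b}\,\theta^\a\wedge\overline{\theta^\b}$. The key point is that, since $\wt F\colon(\wt{\mathcal{G}},\mathbb{I}_r)\to\mathbb{C}^4\setminus\mathbb{C}^2$ is a biholomorphism, the coordinates $(W^\a)$ are $\mathbb{I}_r$-holomorphic, so the $(1,1)$-form $\wt\omega_{\mathbb{J}_r}=i\partial\overline\partial\,\wt r$ is automatically of the shape $i\wt g_{\a\overline\b}\,dW^\a\wedge d\overline{W^\b}$ --- and \eqref{omega-J-flat} already displays it in exactly this shape.

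Concretely, I would first read off from \eqref{omega-J-flat} that the only nonzero metric components are $\wt g_{0\overline 2}=\wt g_{2\overline 0}=\wt g_{1\overline 3}=\wt g_{3\overline 1}=1/\sqrt{2}$. Applying the correspondence then gives
\[
\wt g[r]=2\wt g_{\a\overline\b}\,dW^\a\cdot d\overline{W^\b}
=\sqrt{2}\bigl(dW^0\cdot d\overline{W^2}+dW^2\cdot d\overline{W^0}+dW^1\cdot d\overline{W^3}+dW^3\cdot d\overline{W^1}\bigr),
\]
which is the claimed expression. Since this is a constant-coefficient hermitian form in the global coordinates $(W^\a)$, it is manifestly flat, and a one-line check (splitting each $W^\a$ into real and imaginary parts) shows it has signature $(4,4)$, consistent with $\wt g[r]$ being neutral.

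In short, all the substantive work --- the formulas for $\wt\tau_r$ and $\wt\omega_r$, the computation of $\mathbb{J}_r$ in Proposition \ref{J-flat}, and the passage to $\wt\omega_{\mathbb{J}_r}$ via \eqref{omega-J-r} and \eqref{2-form-J} --- precedes the statement, so there is no genuine obstacle left. The only things that need care are the numerical factors ($\sqrt{2}$ versus $2$) coming from the $\cdot$/$\wedge$ conventions, and the observation that although $\wt g[r]$ a priori has both a horizontal and a vertical block, in the $\wt F$-coordinates these assemble into the single off-diagonal quadratic form above.
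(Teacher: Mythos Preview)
Your proposal is correct and follows exactly the paper's approach: the theorem is stated immediately after the computation \eqref{omega-J-flat} of the K\"ahler form $\wt\omega_{\mathbb{J}_r}$, and the metric is read off via the correspondence $2g_{\a\overline\b}\theta^\a\cdot\overline{\theta^\b}\leftrightarrow ig_{\a\overline\b}\theta^\a\wedge\overline{\theta^\b}$ from the \emph{Notations}. There is no separate proof in the paper beyond this passage.
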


\subsection{Computation of $\wt r$}
By \eqref{potential-r}, the K\"ahler potential of the ambient metric $\wt g[r]$ is given by
\[
\wt r=\|\pi\|^2_r r=\frac{1}{\sqrt{2}}(|\pi_{0'}|^2+|\pi_{1'}|^2)(x^{00'}+x^{11'}).
\]
On the other hand, using $\overline{x^{00'}}=x^{11'}$ and $\overline{x^{01'}}=-x^{10'}$, we have
\begin{align*}
2{\rm Re}\,(W^0\overline{W^2}+W^1\overline{W^3}) 
&=2{\rm Re}\,\bigl((x^{00'}\pi_{0'}+x^{01'}\pi_{1'})\overline{\pi_{0'}}+(x^{10'}\pi_{0'}+x^{11'}\pi_{1'})\overline{\pi_{1'}}\bigr) \\
&=(|\pi_{0'}|^2+|\pi_{1'}|^2)(x^{00'}+x^{11'}).
\end{align*}
Thus, we obtain 
\[
\wt r=\frac{1}{\sqrt{2}}(W^0\overline{W^2}+W^2\overline{W^0}+W^1\overline{W^3}+W^3\overline{W^1}), 
\]
from which we recover the formula \eqref{omega-J-flat}.


\begin{thebibliography}{99}


\bibitem{AHS}
  M. F. Atiyah, N. J. Hitchin, and I. M. Singer,
  {\it Self-duality in four-dimensional Riemannian geometry},
   Proc. Roy. Soc. London A.
   {\bf 362} (1978), no. 1711, 425--461.
   
\bibitem{Bi}
 O. Biquard,
{\it Asymptotically symmetric Einstein metrics}, 
SMF/AMS Texts and Monographs vol. 13, American
Mathematical Society, Providence, RI; Soci\'et\'e Math\'ematique de France, Paris, 2006. Translated from the 2000 French original by Stephen S. Wilson.   
   
\bibitem{BDS} D. Burns, K. Diederich, and S. Shnider, 
{\it Distinguished curves in pseudoconvex boundaries},
Duke Math. J. {\bf 44} (1977), no. 2, 407--431.

\bibitem{CGGH} A. \v Cap, A. R. Gover, C. R. Graham, and M. Hammerl,
{\it Conformal holonomy equals ambient holonomy}, 
Pacific J. Math. {\bf 285}, no. 2, 303--318.
  
  \bibitem{CS} A. \v Cap and J. Slov\'ak, 
{\it Parabolic geometries. I}, volume 154 of Mathematical Surveys and Monographs. American Mathematical Society, Providence, RI, 2009. Background and general theory.

  
  \bibitem{ChY} 
  S.-Y. Cheng and S.-T. Yau,
{\it On the existence of a complete K\"ahler metric on non-compact complex manifolds and the regularity of Fefferman's equation},
Comm. Pure  Appl. Math. {\bf 33} (1980), 507--544. 
 
 
 \bibitem{F} 
 C. Fefferman, 
 {\it Monge--Amp\`ere equations, the Bergman kernel, and geometry of pseudoconvex domains}, 
Ann. of Math. {\bf 103} (1976), 395--416; erratum {\bf 104} (1976), 393--394.

  \bibitem{FG1}C. Fefferman and C. R. Graham, 
  {\it Conformal invariants},
  in {The Mathematical Heritage of \'Elie Cartan (Lyon, 1984)}, Ast\'erisque, 1985, Numero Hors
  Serie, 95--116.
   
  \bibitem{FG2} C. Fefferman and C. R. Graham, 
  {\it The Ambient Metric},
  Annals of Mathematics Studies, 178. Princeton University Press, 2012.
  
  \bibitem{FH} C. Fefferman and K. Hirachi,
  {\it Ambient metric construction of $Q$-curvature in conformal and CR geometries,} Math. Res. Lett. {\bf 10} (2003), 819--832.


\bibitem{G} C. R. Graham, 
{\it Volume and area renormalizations for conformally compact Einstein metrics,}
Rend. Cirs. Mat. Palermo Ser. II {\bf 63} (2000), Suppl., 31--42.
  
  \bibitem{GH} C. R. Graham and K. Hirachi,
{\it Inhomogeneous Ambient Metrics,} 
The IMA volumes in mathematics and its applications 
{\bf 144}, 403--420, Springer 2007.
  
  \bibitem{GJMS} C. R. Graham, R. Jenne, L. J. Mason,  and G. A. J. Sparling, 
{\it Conformally invariant powers of the Laplacian, I: Existence}, 
J. London Math. Soc. {\bf 46} (1992), 557--565.

  
  \bibitem{GL} C. R. Graham and J. M. Lee,
  {\it Einstein metrics with prescribed conformal infinity on the ball},
  Adv. Math. {\bf 87} (1991), no. 2, 186--225.
  \bibitem{GW} C. R. Graham and T. Willse,
{\it Parallel tractor extension and ambient metrics of holonomy split $G_2$},
 J. Differential Geom. {\bf 92} (2012), no. 3, 463--506. 
 \bibitem{GS} M. J. Gursky and G. Sz\'ekelyhidi,
  {\it A local existence result for Poincar\'e-Einstein metrics},
  Adv. Math. {\bf 361} (2020), 106912.
 
\bibitem{HS}M. Hammerl and K. Sagerschnig,
{\it Conformal structures associated to generic rank 2 distributions on 5-manifolds--characterization and Killing-field decomposition},
SIGMA {\bf 5} (2009), 081, 29 pages.
  
 

\bibitem{HP}S. W. Hawking and D. N. Page,
{\it Thermodynamics of black holes in anti-de Sitter space},
Comm. Math. Phys. {\bf 87} (1983), 577-588.
    
  \bibitem{LeB1} C. R. LeBrun,
  {\it $\mathcal{H}$-space with a cosmological constant},
  Proc. Roy. Soc. London A. {\bf 380} (1982), no. 1778, 171--185.
  
  
  
\bibitem{LeB2} C. R. LeBrun,
{\it Twistor CR manifolds and three-dimensional conformal geometry},
Trans. Amer. Math. Soc, {\bf 284} (1984), 601-616.

\bibitem{Lee} J. M. Lee,
{\it The Fefferman metric and pseudo-Hermitian invariants},
Trans. Amer. Math. Soc, {\bf 296} (1986), no. 1, 411--429.

\bibitem{LN}T, Leistner and P. Nurowski,
{\it Ambient metrics with exceptional holonomy},
Ann. Sc. Norm. Super. Pisa Cl. Sci. {\bf 11} (2012), 407--436.


\bibitem{Mat}Y. Matsumoto,
{\it A construction of Poincar\'e-Einstein metrics of cohomogeneity one on the ball,}
Proc. Amer. Math. Soc. {\bf 147} (2019), no. 9, 3987--3993.

\bibitem{N1}P. Nurowski,
{\it Differential equations and conformal structures},
J. Geom. Phys. {\bf 55} (2005), 19--49.

\bibitem{N2} P. Nurowski,
{\it Conformal structures with explicit ambient metrics and conformal $G_2$ holonomy},
Symmetries and Overdetermined Systems of Partial Differential Equations, IMA Vol. Math. Appl. {\bf 144}, Springer, 2008, 515--526.


\bibitem{O}B. O'Neill,
{\it The fundamental equations of a submersion},
Michigan Math. J. {\bf 13} (1966), 459--470.


\bibitem{P} R. Penrose,  
{\it Nonlinear gravitons and curved twistor theory},
 General Relativity and Gravitation {\bf 7} (1976), 31--52.


\bibitem{PR1} R. Penrose and W. Rindler 
{\it Spinors and space-time Volume 1},
Cambridge University Press (1984).



\bibitem{Sw}A. Swann,
{\it HyperK\"ahler and quaternionic K\"ahler geometry},
Math. Ann. {\bf 289} (1991), 421--450.

\end{thebibliography}
\end{document}